\numberwithin{equation}{section}
\numberwithin{figure}{section}
\theoremstyle{plain}
\newtheorem{thm}{\protect\theoremname}[section]
\theoremstyle{definition}
\newtheorem{defn}[thm]{\protect\definitionname}
\theoremstyle{remark}
\newtheorem{rem}[thm]{\protect\remarkname}
\theoremstyle{plain}
\newtheorem{cor}[thm]{\protect\corollaryname}
\theoremstyle{plain}
\newtheorem{lem}[thm]{\protect\lemmaname}
\theoremstyle{plain}
\newtheorem{prop}[thm]{\protect\propositionname}
\providecommand{\corollaryname}{Corollary}
\providecommand{\definitionname}{Definition}
\providecommand{\lemmaname}{Lemma}
\providecommand{\propositionname}{Proposition}
\providecommand{\remarkname}{Remark}
\providecommand{\theoremname}{Theorem}
\begin{document}
\title{On the correlations of $n^{\alpha}$ mod 1}
\date{\today}
\author{Niclas Technau and Nadav Yesha}
\begin{abstract}
A well known result in the theory of uniform distribution modulo one
(which goes back to Fejér and Csillag) states that the fractional
parts $\{n^{\alpha}\}$ of the sequence $(n^{\alpha})_{n\ge1}$ are
uniformly distributed in the unit interval whenever $\alpha>0$ is
not an integer. For sharpening this knowledge to local statistics,
the $k$-level correlation functions of the sequence $(\{n^{\alpha}\})_{n\geq1}$
are of fundamental importance. We prove that for each $k\ge2,$ the
$k$-level correlation function $R_{k}$ is Poissonian for almost
every $\alpha>4k^{2}-4k-1$.
\end{abstract}

\address{School of Mathematical Sciences, Tel Aviv University\\
Tel Aviv 69978\\
Israel }
\email{niclast@mail.tau.ac.il}
\address{Department of Mathematics, University of Haifa\\
Haifa 3498838\\
Israel}
\email{nyesha@univ.haifa.ac.il}

\maketitle
\global\long\def\J{\mathcal{J}}%

\global\long\def\C{\mathcal{C}}%

\global\long\def\Lon{L_{1}}%

\global\long\def\Ltw{L_{2}}%

\global\long\def\Lth{L_{3}}%

\global\long\def\Lfo{L_{4}}%

\global\long\def\Lgi{L_{i}}%

\global\long\def\yon{y_{1}}%

\global\long\def\ytw{y_{2}}%

\global\long\def\yth{y_{3}}%

\global\long\def\yfo{y_{4}}%

\global\long\def\ygi{y_{i}}%

\global\long\def\Van{\mathrm{Van}}%

\global\long\def\A{A}%

\global\long\def\calS{\mathcal{S}}%

\global\long\def\la{\lambda}%

\global\long\def\y{y}%

\global\long\def\bfx{\mathbf{x}}%

\global\long\def\bfy{\mathbf{y}}%

\global\long\def\bfn{\mathbf{n}}%

\global\long\def\bfm{\mathbf{m}}%

\global\long\def\bfw{\mathbf{w}}%

\global\long\def\bfu{\mathbf{u}}%

\global\long\def\bfv{\mathbf{v}}%

\global\long\def\bfz{\mathbf{z}}%

\global\long\def\bfugi{u_{i}}%

\global\long\def\bft{\mathbf{t}}%

\global\long\def\bftau{\boldsymbol{\tau}}%

\global\long\def\bfPhi{\boldsymbol{\Phi}}%

\global\long\def\eqdef{:=}%

\global\long\def\O{O}%

\global\long\def\E{\mathcal{E}}%

\global\long\def\V{\mathcal{V}}%

\global\long\def\L{\mathcal{L}}%

\global\long\def\I{\mathcal{I}}%

\global\long\def\calBN{\mathcal{X}_{k}}%

\global\long\def\calBNp{\mathcal{N}_{k-1}^{\epsilon}}%

\section{Introduction}

A real-valued sequence $\left(\vartheta_{n}\right)_{n\geq1}$ is called
\emph{equidistributed} or \emph{uniformly distributed modulo one}
if each sub-interval $[a,b]\subseteq\left[0,1\right]$ gets its fair
share of fractional parts $\left\{ \vartheta_{n}\right\} $ in the
sense that
\[
\frac{1}{N}\#\left\{ n\leq N:\,\left\{ \vartheta_{n}\right\} \in[a,b]\right\} \underset{N\rightarrow\infty}{\longrightarrow}b-a.
\]
The notion of uniform distribution modulo one has been studied intensively
since the beginning of the twentieth century, originating in Weyl's
seminal paper \emph{Über die Gleichverteilung von Zahlen mod. Eins}
\cite{Weyl}. Notable instances of such sequences, as Weyl proved,
are $\vartheta_{n}=\alpha n^{d}$ where $d\ge1$ is an integer and
$\alpha$ is irrational.

In this paper we study another natural family of sequences whose fractional
parts are equidistributed, namely 
\begin{equation}
\vartheta_{n}=n^{\alpha}\label{eq:n^alpha}
\end{equation}
where $\alpha>0$ is \emph{non-integer}. The equidistribution modulo
one of these sequences (and more generally sequences of the form $\vartheta_{n}=\beta n^{\alpha}$
with $\beta\ne0$ and non-integer $\alpha>0)$ is a corollary of Fejér's
Theorem (see, e.g., \cite[Cor. 2.1]{KN}) in the regime $0<\alpha<1$,
which was extended to $\alpha>1$ by Csillag \cite{Csillag}.

In the last couple of decades the theory of equidistribution modulo
one acquired a new facet which has developed into a highly active
area of research: \emph{local} (or fine-scale) statistics, which measure
the behaviour of a sequence on the scale of the mean gap $1/N$. These
statistics are able to distinguish between different equidistributed
sequences, and are designed to quantify the \emph{randomness} of a
sequence; they are determined (see, e.g., \cite[Appendix A]{Kurlberg-Rudnick})
by the the \emph{$k$-level correlation functions}, which are therefore
fundamental objects in this context. We first introduce the simplest
correlation function, namely the pair correlation function.

\subsection{The pair correlation function}

The \emph{pair correlation function} $R_{2}\left(x\right)$ defined
as the limit distribution (if exists)
\begin{equation}
\lim_{N\to\infty}\frac{1}{N}\#\left\{ 1\le m\ne n\le N:\,\vartheta_{n}-\vartheta_{m}\in\frac{1}{N}I+\mathbb{Z}\right\} =\int_{I}R_{2}\left(x\right)\,\text{d}x\hspace{1em}\left(I\subseteq\mathbb{R}\right)\label{eq:R_2_def}
\end{equation}
which measures the distribution of spacings between pairs of elements
modulo one (not necessarily consecutive) on the scale of  $1/N$.
In particular, we say that the pair correlation function is Poissonian
if $R_{2}\equiv1$, which is the pair correlation function of a sequence
of independent random variables drawn uniformly in the unit interval
(Poisson process). Since Poissonian pair correlation implies equidistribution
modulo one (see \cite{Larcher Grepstad: On pair correlation and discrepancy}),
studying the pair correlation function can also be viewed as a natural
sharpening of the theory of uniform distribution modulo one.

Being the most analytically accessible local statistic, the pair correlation
of sequences modulo one has attracted considerable attention starting
with the work of Rudnick and Sarnak \cite{Rudnick Sarnak: The pair correlation function of fractional parts of polynomials},
who showed that for any $d\ge2$, the sequence $(\{\alpha n^{d}\})_{n\geq1}$
has Poissonian pair correlation for almost all $\alpha\in\mathbb{R}$.
Let us stress that often parametric families of sequences are investigated,
as results for individual sequences are rarities. Indeed, even in
the quadratic case $(\{\alpha n^{2}\})_{n\geq1}$ showing Poissonian
pair correlation even for simple choices of $\alpha$, say $\alpha=\sqrt{2}$,
is an open problem.

Rudnick and Sarnak's result is an instance of a more general metric
theory of the pair correlation of sequences of the form
\begin{equation}
\vartheta_{n}\left(\alpha\right)=\alpha a_{n}\label{eq: Kronecker subsequences}
\end{equation}
where  $(a_{n})_{n\geq1}$ is a strictly increasing sequence of positive
integers. The interest in a systematic metric theory of the pair correlation
property have recently gained momentum, following the work of Aistleitner,
Larcher and Lewko \cite{Aistleitner Larcher Lewko: Additive Energy and the Hausdorff Dimension of the Exceptional Set in Metric Pair Correlation Problems}.
A crucial observation for this development was the central role of
the additive energy $E\left(\A{}_{N}\right)$ of the truncation $\A_{N}\eqdef\left\{ a_{n}:n\leq N\right\} $,
that is 
\[
E(\A{}_{N})=\#\{(a,b,c,d)\in\A{}_{N}^{4}:\,a+b=c+d\}.
\]
With the observation $N^{2}\leq E\left(\A{}_{N}\right)\leq N^{3}$
in mind, it was proved in \cite{Aistleitner Larcher Lewko: Additive Energy and the Hausdorff Dimension of the Exceptional Set in Metric Pair Correlation Problems}
that if there is some $\epsilon>0$ such that 
\[
E\left(\A{}_{N}\right)=\O(N^{3-\epsilon}),
\]
then the fractional parts of the sequence (\ref{eq: Kronecker subsequences})
have metric Poissonian pair correlation, i.e., has Poissonian pair
correlation for almost all $\alpha\in\mathbb{R}$. The previous assumption
for identifying metric Poissonian pair correlation was slackened considerably
by Bloom and Walker \cite{Bloom-Walker}, requiring only
\[
E\left(\A{}_{N}\right)=\O(N^{3}(\log N)^{-C})
\]
with a universal constant $C>0$. For further results on the additive
energy $E\left(\A{}_{N}\right)$ and applications, see \cite{Aistleitner-Lachmann-Technau,Bloom-Chow-Gafni-Walker: Additive Energy and the Metric Poissonian,Lachmann-Technau,Walker: The Primes are not Metric Poissonian}.

There are much fewer results about the pair correlation of sequences
which are not dilated integer sequences as in (\ref{eq: Kronecker subsequences}).
Metric Poissonian pair correlation was recently established by Rudnick
and Technau \cite{Rudnick-Technau} for dilations of \emph{non-integer},
\emph{lacunary }sequences (i.e., sequences satisfying $\liminf\limits _{n\to\infty}\frac{a_{n+1}}{a_{n}}>1$).
Another family of non-integer lacunary sequences are the sequences
$\vartheta_{n}\left(\alpha\right)=\alpha^{n}$ where $\alpha>1$;
these were recently studied by Aistleitner and Baker \cite{Aistleitner-Baker}
who showed Poissonian pair correlation for almost all $\alpha>1$.
For sequences of the form (\ref{eq:n^alpha}), only the case $\alpha=1/2$
has been settled: El-Baz, Marklof and Vinogradov \cite{Elbaz-Marklof-Vinogradov}
showed that the pair correlation of the sequence $(\left\{ \sqrt{n}\right\} )_{n\geq1,\sqrt{n}\notin\mathbb{Z}}$
is Poissonian -- this is somewhat surprising in light of the non-Poissonian
nearest neighbour spacing distribution established by Elkies and McMullen
\cite{Elkies-McMullen} (see §\ref{subsec:Application} below). 

\subsection{Higher order correlation functions}

The definition of the pair correlation function naturally extends
to higher correlation functions $R_{k}\left({\bf x}\right)$ ($k\ge2)$
which detect the distribution of scaled spacings between $k$-tuples
of elements modulo one. Rather than working with boxes in $\mathbb{R}^{k-1}$,
it will be technically more convenient (and equivalent) to define
$R_{k}\left({\bf x}\right)$ via functions in $C_{c}^{\infty}(\mathbb{R}^{k-1})$,
the class of $C^{\infty}$-functions from $\mathbb{R}^{k-1}$ to $\mathbb{R}$
with compact support.  Let $\calBN=\calBN\left(N\right)$ denote the
set of distinct integer $k$-tuples $\left(x_{1},\ldots,x_{k}\right)$
satisfying $1\le x_{i}\leq N$, and for ${\bf x}\in\calBN$ denote
\[
\Delta\left(\bfx,\left(\vartheta_{n}\right)\right)\eqdef\left(\vartheta_{x_{1}}-\vartheta_{x_{2}},\ldots,\vartheta_{x_{k-1}}-\vartheta_{x_{k}}\right)\in\mathbb{R}^{k-1}.
\]

\begin{defn}
\label{def:k-point-corr}Given a compactly supported function $f:\mathbb{R}^{k-1}\to\mathbb{R}$,
we define the $k$-level correlation sum by

\begin{equation}
R_{k}(f,(\vartheta_{n}),N)\eqdef\frac{1}{N}\sum_{\bfx\in\calBN}\sum_{\bfm\in\mathbb{Z}^{k-1}}f\left(N\left(\Delta\left({\bf x},\left(\vartheta_{n}\right)\right)-\bfm\right)\right).\label{eq:R_k_def}
\end{equation}
The (limiting) $k$-level correlation function $R_{k}\left({\bf x}\right)$
is defined as the limit distribution (if exists)
\begin{equation}
\lim_{N\to\infty}R_{k}(f,(\vartheta_{n}),N)=\int_{\mathbb{R}^{k-1}}f\left({\bf x}\right)R_{k}\left({\bf x}\right)\,\text{d}{\bf x}\hspace{1em}(f\in C_{c}^{\infty}(\mathbb{R}^{k-1})).\label{eq:k-point-corr}
\end{equation}

In particular, we say that $k$-level correlation function is Poissonian
if $R_{k}\equiv1$, which is the $k$-level correlation function of
independent uniform random variables. 
\end{defn}

In contrast to the well-developed metric theory of the pair correlation
property for sequences of the shape (\ref{eq: Kronecker subsequences}),
much less is known about the triple and higher order correlation functions
whose analysis is much more involved. To the best of our knowledge,
only for lacunary sequences there are fully satisfactory results,
for which Rudnick and Zaharescu \cite{Rudnick Zaharescu} proved metric
Poissonian $k$-level correlation for any $k\geq2$.

The study of sequences of polynomial growth, even in the presence
of strong arithmetic structure, consists of only a handful of partial
results. A notable example is due to Rudnick, Sarnak and Zaharescu
\cite[Thm. 1]{Rudnick Sarnak Zaharescu: The distribution of spacings between the fractional parts of n^2},
who showed Poissonian $k$-level correlation for any $k\ge2$ for
$(\{\alpha n^{2}\})_{n\geq1}$ along special subsequences of $N$
when $\alpha$ is well approximable by rationals. Indeed, the polynomial
growth of the sequence (\ref{eq:n^alpha}) is a main challenge in
the present work. 

\subsection{Main results}

We study the correlation functions of the sequences (\ref{eq:n^alpha}).
To simplify the notation, we write $R_{k}\left(f,\alpha,N\right)$
instead of $R_{k}(f,(n^{\alpha}),N)$.
\begin{thm}
\label{thm: higher order}Let $k\geq2$. The $k$-level correlation
function of $(\{n^{\alpha}\})_{n\geq1}$ is Poissonian for almost
every $\alpha>4k^{2}-4k-1$. In particular, the pair correlation is
Poissonian for almost every $\alpha>7$.
\end{thm}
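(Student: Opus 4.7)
\emph{Proof sketch.} The plan is to prove, for each fixed $f\in C_c^\infty(\mathbb{R}^{k-1})$, the convergence $R_k(f,\alpha,N)\to\hat f(0)=\int f$ for almost every $\alpha>4k^2-4k-1$, via a second-moment (variance) estimate in $\alpha$ together with the Borel--Cantelli lemma. A density argument in $f$ then upgrades this to convergence for every admissible test function simultaneously, which by Definition~\ref{def:k-point-corr} is the Poissonian behaviour $R_k\equiv 1$.

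First I would apply Poisson summation to the $\mathbf{m}$-sum in~\eqref{eq:R_k_def}. This transforms the correlation sum into
\[
R_k(f,\alpha,N)=\frac{1}{N^k}\sum_{\mathbf{m}\in\mathbb{Z}^{k-1}}\hat f(\mathbf{m}/N)\sum_{\mathbf{x}\in\calBN}e\!\left(\mathbf{m}\cdot\Delta(\mathbf{x},(n^\alpha))\right).
\]
The $\mathbf{m}=0$ term yields the desired main term $\hat f(0)$ up to $O(1/N)$, since $|\calBN|=N^k(1+O(1/N))$. For $\mathbf{m}\neq 0$ one has $\mathbf{m}\cdot\Delta(\mathbf{x})=\sum_{j=1}^{k}h_j(\mathbf{m})\,x_j^{\alpha}$ with $h_j(\mathbf{m})$ non-trivial integer linear combinations of the $m_i$; after inclusion--exclusion to lift the distinctness restriction on $\mathbf{x}$, the inner sum factorises into products of one-dimensional exponential sums $T_h(N,\alpha):=\sum_{n\le N}e(h n^\alpha)$. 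Rapid decay of $\hat f$ effectively truncates $|\mathbf{m}|\ll N^{1+\epsilon}$.

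The heart of the argument is the variance bound
\[
\int_a^b\bigl|R_k(f,\alpha,N)-\hat f(0)\bigr|^{2}\,d\alpha\ll_{a,b,f} N^{-\delta}
\]
on any compact $[a,b]\subset(4k^2-4k-1,\infty)$, for some $\delta>0$. Expanding the square and interchanging integration and summation reduces the problem to estimating oscillatory integrals of the form
\[
\int_a^b e\!\left(\sum_{j=1}^{2k}\varepsilon_j h_j n_j^\alpha\right)d\alpha,\qquad \varepsilon_j\in\{\pm 1\},
\]
summed against $|h_j|\ll N^{1+\epsilon}$ and $n_j\le N$. Since $\partial_\alpha n^\alpha=n^\alpha\log n$, a dyadic decomposition in the $n_j$ followed by repeated integration by parts (or van der Corput's lemma) extracts genuine cancellation from the $\alpha$-oscillation whenever the $n_j$ live at scale $\asymp N$. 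The magnitude of this saving, weighed against the $N^{2k}$ tuples $(n_1,\ldots,n_{2k})$ and the $N^{(k-1)(1+\epsilon)}$ admissible frequencies, is what pins down the threshold $\alpha>4k^2-4k-1$.

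Once the variance bound is in place, Chebyshev and Borel--Cantelli give almost-everywhere convergence of $R_k(f,\alpha,N_j)\to\hat f(0)$ along a subsequence with $\sum_j N_j^{-\delta}<\infty$, and passage to all $N$ follows by sandwiching $f$ between smooth majorants and minorants, exploiting the near-continuity of $R_k$ in $N$ on a logarithmic scale. I expect the main obstacle to be the oscillatory integral estimate: one must obtain cancellation uniformly in the integer parameters $h_j$ and $n_j$ and in $\alpha\in[a,b]$, strong enough to beat the combinatorial count by a fixed polynomial factor. The requirement to balance the stationary-phase gain against the number of tuples is precisely what produces the quadratic-in-$k$ lower bound on $\alpha$, rather than some tighter one.
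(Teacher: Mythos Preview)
Your global architecture---Poisson summation, then an $L^2$ variance bound over $\alpha\in[A,A+1]$, then Borel--Cantelli along a sparse subsequence plus a sandwiching argument to recover all $N$---matches the paper exactly. The gap is entirely in the one step you flag as ``the main obstacle'': bounding the oscillatory integrals
\[
\int_{A}^{A+1} e\Bigl(\sum_{j\le 2k} u_j x_j^{\alpha}\Bigr)\,\mathrm{d}\alpha
\]
uniformly in the parameters. Two points in your sketch do not survive contact with this integral.

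First, the factorisation into one-dimensional sums $T_h(N,\alpha)=\sum_{n\le N}e(hn^\alpha)$ is a dead end here. The paper never sums over $\mathbf{x}$ before integrating; it fixes $(\mathbf{u},\mathbf{x})$, integrates over $\alpha$, and only then sums. The reason is that the cancellation lives in $\alpha$, not in $n$: for a single $T_h$ one has no useful bound uniform in $\alpha$ and $h$, whereas the $\alpha$-integral of the full phase can be controlled pointwise in $(\mathbf{u},\mathbf{x})$.

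Second, and more seriously, ``dyadic decomposition in the $n_j$ plus integration by parts whenever the $n_j$ live at scale $\asymp N$'' is not enough, and is false as stated. Even with all $x_j\asymp N$, the first several $\alpha$-derivatives of $\phi(\alpha)=\sum u_j x_j^\alpha$ can be \emph{simultaneously} small, because the terms $u_j x_j^\alpha(\log x_j)^r$ can nearly cancel (think of two $x_j$ that coincide, or are very close, with opposite coefficients). Plain van der Corput then gives nothing. The paper's key device is a \emph{repulsion principle}: writing $(\phi',\ldots,\phi^{(d)})^{T}=V_d\,(u_1x_1^\alpha,\ldots,u_dx_d^\alpha)^{T}$ with $V_d$ the Vandermonde matrix in $\log x_j$, one inverts $V_d$ to show that
\[
\max_{1\le r\le d}\bigl|\phi^{(r)}(\alpha)\bigr|\ \gg_{d,\epsilon}\ N^{-\epsilon}\,|u_d|\,x_d^{A+1-d}\prod_{m=1}^{d-1}(x_{m+1}-x_m),
\]
so \emph{some} derivative among the first $d$ is always large. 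A tailored variant of van der Corput (allowing a different derivative to dominate at different points of $[A,A+1]$) then yields $|I|\ll\lambda^{-1/d}$ with $\lambda$ the right-hand side above. The appearance of the gaps $x_{m+1}-x_m$ is essential: summing $\prod_m h_m^{-1/d}$ over $h_m\le N$ together with $x_d^{-(A+1-d)/d}$ and the $2k-2$ free frequency variables is precisely what produces the threshold $A>d(d-2)-1$ for each $1\le d\le 2k$, hence $A>4k^2-4k-1$. A dyadic localisation of the $x_j$ alone does not see this gap structure and cannot recover the correct exponent.

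In short: keep your outer framework, but replace the vague ``dyadic $+$ integration by parts'' by the Vandermonde/repulsion argument that forces at least one of the first $2k$ derivatives of the phase to be large, with the quantitative dependence on the spacings $x_{m+1}-x_m$. That is the engine that makes the counting close.
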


In order to prove Theorem \ref{thm: higher order}, we will take an
$L^{2}$ approach. The expected value of the $k$-level correlation
sum (when averaging over $\alpha$) is asymptotic to $\int_{\mathbb{R}^{k-1}}f\left({\bf x}\right)\,\text{d}{\bf x}$
as will be shown in Proposition \ref{prop: Expectation} (for $k=2$)
and Proposition \ref{prop:Expectation-k>2} (for $k>2$). For technical
reasons that will become apparent below, it is convenient to multiply
$\int_{\mathbb{R}^{k-1}}f\left({\bf x}\right)\,\text{d}{\bf x}$ by
the harmless combinatorial factor 
\begin{equation}
C_{k}\left(N\right)\eqdef\left(1-\frac{1}{N}\right)\cdots\left(1-\frac{k-1}{N}\right),\label{eq:C_k(N)}
\end{equation}
which is exactly the number of elements of $\calBN$ divided by $N^{k}$.
The following \emph{definition} for the variance is therefore natural.
\begin{defn}
Let $\mathcal{I}\subseteq\mathbb{R}_{>0}$ be an interval. The variance
of the $k$-level correlation sum $R_{k}\left(f,\alpha,N\right)$
with respect to $\mathcal{I}$ is defined as
\[
\mathrm{Var}\left(R_{k}\left(f,\cdot,N\right),\mathcal{I}\right)\eqdef\int_{\mathcal{\mathcal{I}}}\left(R_{k}\left(f,\alpha,N\right)-C_{k}\left(N\right)\int_{\mathbb{R}^{k-1}}f\left({\bf x}\right)\,\text{d}{\bf x}\right)^{2}\,\mathrm{d}\alpha.
\]
\end{defn}

We will deduce Theorem \ref{thm: higher order} from the following
variance bound.
\begin{thm}
\label{thm: variance bound}Let $k\geq2$, $A>4k^{2}-4k-1$ and $\J=\left[A,A+1\right]$.
There exists $\rho=\rho\left(A\right)>0$ such that 
\begin{equation}
\mathrm{Var}\left(R_{k}\left(f,\cdot,N\right),\J\right)=\O\left(N^{-\rho}\right)\label{eq: variance bound}
\end{equation}
as $N\to\infty$.
\end{thm}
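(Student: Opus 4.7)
The plan is to bound the variance via Poisson summation, followed by integration-by-parts estimates on the resulting oscillatory integrals and a Vinogradov-type count of \emph{near-diagonal} tuples. The first step is to apply Poisson summation to the inner sum over $\bfm\in\mathbb{Z}^{k-1}$ in (\ref{eq:R_k_def}), applied to the Schwartz function $\bfy\mapsto f(N\bfy)$; this yields
\[
R_{k}(f,\alpha,N)=\frac{1}{N^{k}}\sum_{\bfn\in\mathbb{Z}^{k-1}}\hat{f}(\bfn/N)\,S_{\bfn}(\alpha),\qquad S_{\bfn}(\alpha)\eqdef\sum_{\bfx\in\calBN}e\bigl(\bfn\cdot\Delta(\bfx,(n^{\alpha}))\bigr),
\]
where $e(t)=e^{2\pi it}$. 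Writing $\bfn\cdot\Delta(\bfx,(n^{\alpha}))=\sum_{j=1}^{k}c_{j}(\bfn)\,x_{j}^{\alpha}$ with integer coefficients satisfying $\sum_{j}c_{j}(\bfn)=0$ (telescoping), the $\bfn=0$ term contributes exactly $C_{k}(N)\hat{f}(0)$, which is the centering in the variance. Squaring, integrating over $\J$, and swapping sum with integral leaves
\[
\mathrm{Var}=\frac{1}{N^{2k}}\sum_{\bfn,\bfn'\ne0}\hat{f}(\bfn/N)\overline{\hat{f}(\bfn'/N)}\sum_{\bfx,\bfx'\in\calBN}I(\bfx,\bfx',\bfn,\bfn'),
\]
with oscillatory integral $I=\int_{\J}e(\Phi(\alpha))\,\mathrm{d}\alpha$ and phase $\Phi(\alpha)=\sum_{j}c_{j}(\bfn)x_{j}^{\alpha}-\sum_{j}c_{j}(\bfn')(x'_{j})^{\alpha}$.

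Using the Schwartz decay of $\hat{f}$, I would truncate the $\bfn,\bfn'$ sums at $\|\bfn\|,\|\bfn'\|\le N^{1+\epsilon}$ with negligible error, and then estimate $I$ by repeated integration by parts in $\alpha$. The derivative
\[
\Phi'(\alpha)=\sum_{j}c_{j}(\bfn)x_{j}^{\alpha}\log x_{j}-\sum_{j}c_{j}(\bfn')(x'_{j})^{\alpha}\log x'_{j}
\]
has only a bounded number of sign changes on $\J=[A,A+1]$, so after subdividing $\J$ into monotonicity intervals each integration by parts yields a factor of $|\Phi'|^{-1}$ (up to logarithmic losses), producing $|I|\ll_{K}(1+\min_{\alpha\in\J}|\Phi'(\alpha)|)^{-K}$ for every $K\ge 1$.

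Next, I would split the tuples $(\bfx,\bfx',\bfn,\bfn')$ into a \emph{diagonal} part, where $\Phi\equiv 0$ on $\J$ and the multisets $\{(x_{j},c_{j}(\bfn))\}$ and $\{(x'_{j},c_{j}(\bfn'))\}$ agree, and an \emph{off-diagonal} part. The diagonal can be counted explicitly and, after multiplication by $1/N^{2k}$ and summation against $\hat{f}(\bfn/N)$, contributes $\O(N^{-\rho})$. For off-diagonal tuples the aim is a usable lower bound on $|\Phi'|$ on $\J$, to be obtained by examining $\Phi$ together with enough higher derivatives: if $\Phi,\Phi',\dots,\Phi^{(2k-2)}$ were all simultaneously small at some point of $\J$, a Vandermonde-type linear system in the quantities $c_{j}(\bfn)x_{j}^{\alpha}(\log x_{j})^{\ell}-c_{j}(\bfn')(x'_{j})^{\alpha}(\log x'_{j})^{\ell}$ would have to admit an anomalously small solution, forcing the tuple back onto the diagonal up to a controlled number of exceptions. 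Combining the $|\Phi'|^{-K}$ saving with the trivial bound $\O(N^{2k})$ for the count of $(\bfx,\bfx')$ and $\O(N^{(k-1)(1+\epsilon)})$ for the truncated $\bfn$-sums then yields (\ref{eq: variance bound}), provided the exponents balance; this is precisely where the explicit threshold $\alpha>4k^{2}-4k-1=(2k-1)^{2}-2$ enters.

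The main obstacle is the off-diagonal counting step. Since $\alpha\mapsto x^{\alpha}$ is not polynomial, Vinogradov's mean value theorem does not apply directly; it has to be replaced by its Wronskian/Vandermonde analogue, and one must carefully control the error when inverting the corresponding near-singular linear system relating $\Phi$ and its higher derivatives to the individual monomials $x_{j}^{\alpha}$. Once this structural bound on the off-diagonal count is in place, assembling the pieces delivers the variance estimate throughout the stated range of $A$.
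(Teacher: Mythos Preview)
Your setup through Poisson summation, truncation, and the reduction to oscillatory integrals $I=\int_{\J}e(\Phi(\alpha))\,\mathrm{d}\alpha$ matches the paper exactly. The gap is in how you estimate $I$.

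The claim that repeated integration by parts gives $|I|\ll_{K}(1+\min_{\alpha\in\J}|\Phi'(\alpha)|)^{-K}$ for every $K$ is not valid here: a second integration by parts produces a factor $\Phi''/(\Phi')^{2}$, not another $1/\Phi'$, and for phases of the form $\sum_{j}c_{j}x_{j}^{\alpha}$ the ratio $|\Phi''|/|\Phi'|$ is not bounded. More seriously, $\Phi'$ can genuinely \emph{vanish} on $\J$ for generic off-diagonal tuples (the paper's Figure~\ref{fig:repulsion} shows exactly this), so $\min_{\alpha\in\J}|\Phi'(\alpha)|=0$ on a set of tuples that is far too large to be treated as an exceptional ``near-diagonal'' regime. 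Your proposed dichotomy --- either $|\Phi'|$ is large, or a Vandermonde system forces the tuple back to the diagonal --- therefore breaks down: the vanishing of $\Phi'$ does not force any diagonal structure.

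The paper's repair is to keep the Vandermonde idea but redirect it. Writing $(\Phi^{(1)},\dots,\Phi^{(d)})^{T}=V_{d}\,w^{T}$ with $w=(c_{j}x_{j}^{\alpha})_{j}$, the inverse-Vandermonde bound (Lemma~\ref{lem:VDbound}) shows that $M_{d}\Phi(\alpha)\eqdef\max_{1\le i\le d}|\Phi^{(i)}(\alpha)|\ge\lambda$ with an explicit $\lambda\asymp |c_{d}|\,x_{d}^{A+1-d}\prod_{m}h_{m}$, uniformly on $\J$. This feeds into a \emph{higher-order} van der Corput lemma (Lemma~\ref{lem: modified Van der Corput}) yielding $|I|\ll\lambda^{-1/d}$, not $\lambda^{-K}$. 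It is precisely the unavoidable exponent $1/d$, with $d$ ranging up to $2k$, that produces the threshold $A>4k^{2}-4k-1$ when one sums $\lambda^{-1/d}$ over all $(\bfx,\bfx',\bfn,\bfn')$ stratified by degeneracy type (Propositions~\ref{prop: reduction expectation}--\ref{prop: variance reduction} and Corollary~\ref{cor:MainCorollary}). If arbitrary powers of $|\Phi'|^{-1}$ were really available, no lower bound on $A$ would be needed --- a sign that this step cannot work as written.
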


\begin{rem}
Fix any $\beta\ne0$; the generalization of the above theorems to
the sequences $(\left\{ \beta n^{\alpha}\right\} )_{n\geq1}$ is straightforward.
\end{rem}

\subsection{\label{subsec:Application}Application: nearest neighbour spacing
distribution}

We may use Theorem \ref{thm: higher order} to obtain information
about various local statistics, which are determined by the $k$-level
correlation functions $R_{k}\left({\bf x}\right)$. A natural statistic
to consider is the \emph{nearest neighbour spacing distribution} (also
called the gap distribution), which is the limiting distribution $P\left(s\right)$
(if exists) of the gaps between consecutive elements (modulo one)
of the sequence scaled to have a unit mean. More precisely, if we
let
\[
\vartheta_{\left(1\right)}^{N}\le\vartheta_{\left(2\right)}^{N}\le\dots\le\vartheta_{\left(N\right)}^{N}\le\vartheta_{\left(N+1\right)}^{N},
\]
denote the first $N+1$ \emph{ordered} elements of $\left\{ \vartheta_{n}\right\} $,
then $P\left(s\right)$ is defined as the limit distribution (if exists)
\begin{equation}
\lim_{N\to\infty}g\left(x,(\vartheta_{n}),N\right)=\int_{0}^{x}P\left(s\right)\,\text{d}s\label{eq: weak convergence}
\end{equation}
where 
\[
g\left(x,(\vartheta_{n}),N\right)\eqdef\frac{1}{N}\#\left\{ n\le N:\,N\bigl(\vartheta_{\left(n+1\right)}^{N}-\vartheta_{\left(n\right)}^{N}\bigr)\le x\right\} .
\]
A strong indication for randomness of a sequence $(\{\vartheta_{n}\})_{n\ge1}$
is a Poissonian nearest neighbour distribution, that is $P\left(s\right)=e^{-s}$,
which is the nearest neighbour distribution of independent uniform
random variables.

There are only a few examples in which one can determine the gap distribution
(\ref{eq: weak convergence}). For dilations of integer lacunary sequences,
metric Poissnoian gap distribution follows from the aforementioned
metric Poissonian $k$-level correlations established in \cite{Rudnick Zaharescu}.
Another (deterministic) example is the work of Elkies and McMullen
\cite{Elkies-McMullen} on the fractional parts of the sequence $\left(\sqrt{n}\right)_{n\geq1}$.
The gap distribution turns out to be non-standard (in particular not
Poissonian) in this case, and is intimately related to the Haar measure
on the space of translates of unimodular lattices in the plane. For
the fractional parts of $\left(n^{\alpha}\right)_{n\ge1}$ with $\alpha\in\left(0,1\right)\setminus\left\{ 1/2\right\} $,
Elkies and McMullen \cite[Sec. 1]{Elkies-McMullen} conjectured that
the gap distribution is Poissonian. In fact, numerical experiments
suggest that this may hold for most, and perhaps all non-integer $\alpha\in\mathbb{R}_{>0}\setminus\left\{ 1/2\right\} $.
In this regard, while Theorem \ref{thm: higher order} does not allow
us to capture the gap distribution of $(\{n^{\alpha}\})_{n\geq1}$
completely, it ensures that for almost all large values of $\alpha$,
the distribution functions $g\left(x,(n^{\alpha}),N\right)$ can be
approximated by truncations of the Taylor series of $1-e^{-x}$ as
$N\to\infty$, so that the distribution of the gaps becomes \emph{approximately}
Poissonian.
\begin{cor}
\label{cor: approximation to gap distribution}Let $K\geq1$. For
almost all $\alpha>16K^{2}+8K-1$, we have the inequalities
\begin{align*}
\sum_{1\le k\leq2K}(-1)^{k+1}\frac{x^{k}}{k!} & \leq\liminf_{N\to\infty}g\left(x,(n^{\alpha}),N\right)\le\limsup_{N\to\infty}g\left(x,(n^{\alpha}),N\right)\leq\sum_{1\le k\leq2K-1}(-1)^{k+1}\frac{x^{k}}{k!}
\end{align*}
holding for all $x\ge0$.
\end{cor}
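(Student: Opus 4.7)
The plan is to apply Bonferroni's inequalities to sandwich $g(x,(n^\alpha),N)$ between truncated alternating sums of factorial-moment quantities, then to identify these quantities with $(k+1)$-level correlation sums so that Theorem \ref{thm: higher order} supplies their asymptotics. Writing $\xi_n=\xi_n(x,N)$ for the number of $m\ne n$ with $1\le m\le N$ such that $\{m^\alpha-n^\alpha\}\in(0,x/N]$, a standard observation gives, up to an $O(1/N)$ boundary error from the torus wrap-around,
\[
g(x,(n^\alpha),N)=\frac{1}{N}\sum_{n=1}^{N}\mathbf{1}[\xi_n\ge 1],
\]
since the $n$-th sorted gap is $\le x/N$ exactly when the arc of length $x/N$ immediately to the right of $\vartheta_{(n)}^{N}$ contains some other point. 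Applying the pointwise Bonferroni inequalities to $\mathbf{1}[\xi_n\ge 1]=\sum_{k=1}^{\xi_n}(-1)^{k+1}\binom{\xi_n}{k}$, truncated at $2K-1$ from above and $2K$ from below, and averaging over $n$, the corollary reduces to showing
\[
T_k(x,N)\eqdef\frac{1}{N}\sum_{n\le N}\binom{\xi_n(x,N)}{k}\longrightarrow\frac{x^k}{k!}\quad(N\to\infty)
\]
for every $k\in\{1,\ldots,2K\}$ and for almost every $\alpha>16K^2+8K-1$.

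The crucial identification is that $k!\,T_k(x,N)$ equals, up to a negligible diagonal contribution, a $(k+1)$-level correlation sum $R_{k+1}(h_x,\alpha,N)$, where $h_x$ is the indicator of the region
\[
B_x=\{(z_1,\ldots,z_k)\in\mathbb{R}^k:\,-(z_1+\cdots+z_j)\in(0,x]\text{ for every }1\le j\le k\}
\]
of Lebesgue measure $x^k$; here $z_j$ encodes the rescaled consecutive difference $N(\vartheta_{x_j}-\vartheta_{x_{j+1}})$ appearing in the definition of $\Delta$. Since $h_x$ is not smooth, I would sandwich it between $f_-^{(\epsilon)}\le h_x\le f_+^{(\epsilon)}$ in $C_c^\infty(\mathbb{R}^k)$ with $\int(f_+^{(\epsilon)}-f_-^{(\epsilon)})<\epsilon$. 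Because $4(2K+1)^2-4(2K+1)-1=16K^2+8K-1$, Theorem \ref{thm: higher order} yields, on the intersection of the finitely many full-measure sets of $\alpha>16K^2+8K-1$ corresponding to $k+1\in\{2,\ldots,2K+1\}$, the Poissonian limit $R_{k+1}(f_\pm^{(\epsilon)},\alpha,N)\to\int f_\pm^{(\epsilon)}$; sending $\epsilon\to 0$ then gives $T_k(x,N)\to x^k/k!$ at every fixed $x\ge 0$.

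Substituting these limits into the Bonferroni sandwich yields the inequalities at each fixed $x\ge 0$. To obtain the bounds \emph{simultaneously} for all $x\ge 0$ with a common $\alpha$, I would intersect the full-measure sets over $x\in\mathbb{Q}_{\ge 0}$ and then extend to all real $x$ using the monotonicity of $g(\cdot,(n^\alpha),N)$ in $x$ together with the continuity of the bounding Taylor polynomials. The main (and only) non-routine step is the indicator-versus-$C_c^\infty$ smoothing for $h_x$: one must verify that the boundary contribution coming from the $\epsilon$-thickening is genuinely controlled, which follows from $\|f_+^{(\epsilon)}-f_-^{(\epsilon)}\|_{L^1}<\epsilon$ combined with Theorem \ref{thm: higher order} applied to the smooth difference $f_+^{(\epsilon)}-f_-^{(\epsilon)}$, so this is standard.
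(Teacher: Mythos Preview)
Your proposal is correct and follows essentially the same route as the paper: both sandwich $g(x,(n^\alpha),N)$ by truncated alternating sums of correlation quantities via Bonferroni, then invoke Theorem~\ref{thm: higher order} (noting $4(2K+1)^2-4(2K+1)-1=16K^2+8K-1$) to evaluate the limits. The paper simply packages the Bonferroni step as a black-box citation (Lemma~\ref{lem: Par--Zeev}, quoting Kurlberg--Rudnick), whereas you unwind it explicitly through the variables $\xi_n$ and the factorial moments $T_k$.

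A couple of minor remarks. First, your parallelepiped $B_x$ (volume $x^k$) and the paper's simplex $x\Delta_{k-1}$ (volume $x^{k-1}/(k-1)!$) are two equivalent bookkeepings of the same count, differing only by whether one orders the $k$ companions of the anchor point; your index $k$ corresponds to the paper's $k-1$, and the identities line up. Second, your final paragraph about intersecting full-measure sets over $x\in\mathbb{Q}_{\ge 0}$ is unnecessary: once $\alpha$ lies in the single full-measure set on which $R_{k+1}$ is Poissonian in the sense of Definition~\ref{def:k-point-corr} (i.e.\ the limit holds for \emph{every} $f\in C_c^\infty$), your smoothing sandwich $f_-^{(\epsilon)}\le h_x\le f_+^{(\epsilon)}$ already works for every $x\ge 0$ simultaneously, so no further intersection is needed. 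Your smoothing step itself is exactly what is implicit in the paper's passage from Poissonian correlations to $R_k(1_{x\Delta_{k-1}},\cdot,N)\to\mathrm{vol}(x\Delta_{k-1})$.
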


\subsection*{Acknowledgements}

We thank Zeév Rudnick, Jens Marklof and Daniel El-Baz for discussions
and  comments. NT received funding from the European Research Council
(ERC) under the European Union\textquoteright s Horizon 2020 research
and innovation program (Grant agreement No. 786758).\textcolor{red}{{}
}This research was, in part, carried out while NT was visiting the
International Centre for Theoretical Sciences (ICTS) in Bangalore,
whose excellent working environment are gratefully acknowledged, to
participate in the program `Smooth and Homogeneous Dynamics' (Code:
ICTS/etds2019/09).

\section{Outline of the argument}

The analysis of each correlation sum $R_{k}$ follows a general pattern.
First, let us remark that we seek to show three intermediate objectives: 
\begin{enumerate}
\item Show that the expectation of $R_{k}\left(f,\cdot,N\right)$ is asymptotic
to $\int_{\mathbb{R}^{k-1}}f\left({\bf x}\right)\,\mathrm{d}{\bf x}$.
\item The variance of $R_{k}\left(f,\cdot,N\right)$ is $\O(N^{-\rho})$
for some $\rho>0$.
\item Using the previous steps, deduce that $R_{k}\left(f,\cdot,N\right)$
converges almost surely to\\
$\int_{\mathbb{R}^{k-1}}f\left({\bf x}\right)\,\mathrm{d}{\bf x}$. 
\end{enumerate}
In other words, we set out to show that $R_{k}$ concentrates around
its mean value, which we demonstrate to be the desired limit, by taking
an $L^{2}$ approach.

As usual, the crux of the matter is to establish the variance bound.
The third step is fairly routine requiring only minor adaptations
from the standard arguments. For the sake of completeness, we decided
to detail them.

Now let us explain how we bound the variance of the pair correlation
sum
\begin{equation}
R_{2}\left(f,\alpha,N\right)=\frac{1}{N}\sum_{1\le x_{1}\ne x_{2}\le N}\sum_{m\in\mathbb{Z}}f\left(N\left(x_{1}^{\alpha}-x_{2}^{\alpha}-m\right)\right)\label{eq:PairCorrelationDef}
\end{equation}
for which the technical aspects of the analysis, which get more intricate
as $k$ increases, are still relatively simple. By using Poisson summation
and a common truncation argument, the variance can be bounded by a
sum of oscillatory integrals
\begin{equation}
\mathrm{Var}\left(R_{2}\left(f,\cdot,N\right),\J\right)\ll\frac{1}{N^{4}}\sum_{n,m}\sum_{x_{j},y_{j}}\left|\int_{\J}e(n(x_{1}^{\alpha}-x_{2}^{\alpha})-m(y_{1}^{\alpha}-y_{2}^{\alpha}))\,\mathrm{d}\alpha\right|+N^{-t},\label{eq:R2VarHeuristicBound}
\end{equation}
where the constant $t>0$ can be chosen to be arbitrarily large, and
the summation constraints are given by
\begin{equation}
\begin{array}{c}
x_{j},y_{j}\in\left[1,N\right]\quad(j=1,2),\\
n,m\in\left[-N^{1+\epsilon},N^{1+\epsilon}\right],
\end{array}\qquad\begin{array}{c}
x_{1}>x_{2},y_{1}>y_{2},\\
n\ne0,m\ne0.
\end{array}\label{eq: constraints for variance}
\end{equation}
For establishing the desired variance bound, we need to demonstrate
that the right-hand-side of (\ref{eq:R2VarHeuristicBound}) is, up
to a constant, smaller than some fixed negative power of $N$.

In order to bound the above sum, we will establish a bound for each
individual term with good dependence on the $n,m,x_{j},y_{j}$ parameters.
To this end, we use an estimate derived from a suitable modification
of van der Corput's lemma for oscillatory integrals. This provides
us with a sharp bound for the individual terms and, furthermore, with
the necessary uniformity in the parameters. For that estimate to be
applicable, we need to ensure that at any point in $\J$ at least
one of the first four\footnote{For the $k$-level correlation sum we shall consider $2k$-derivatives.}
derivatives of the phase function is large. To demonstrate this largeness
property is the crux of the matter. For verifying it, we use a ``repulsion
principle'' that quantifies how the smallness of the first three
derivatives repels the fourth derivative from being small as well
(see Figure \ref{fig:repulsion} illustrating the first four derivatives
of a phase function that we encounter).\footnote{In the case of the $k$-level correlation sum we show that at least
one of the first $2k$-derivatives is large.}\textcolor{red}{}
\begin{figure}
\centering{}\textcolor{red}{\includegraphics{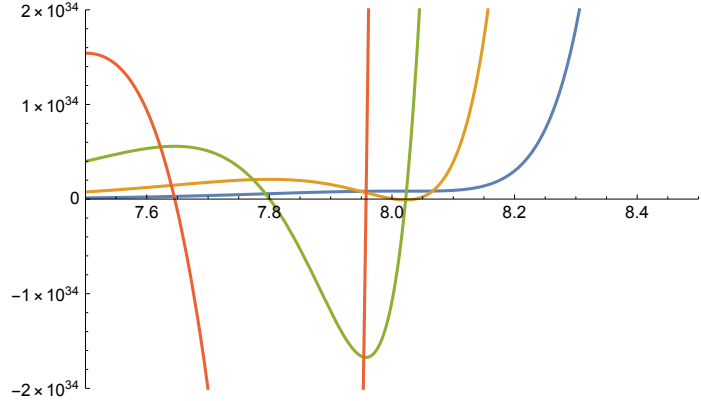}}\caption{\textcolor{black}{\label{fig:repulsion}Plot of the first four derivatives
of the phase function $\phi\left(\alpha\right)=n(x_{1}^{\alpha}-x_{2}^{\alpha})-m(y_{1}^{\alpha}-y_{2}^{\alpha}),$
where the blue curve is $\phi'$, the orange curve is $\phi''$, the
green curve is $\phi^{\left(3\right)}$ and the red curve is $\phi^{\left(4\right)}.$
Here we used the following specifications for the plot: $n=5135,m=10000$,
and $x_{1}=10000$, $x_{2}=1000$, $y_{1}=9500$, $y_{2}=7890$ in
the range $\alpha\in[7.5,8.5]$.}}
\end{figure}

\section{Preliminaries}

Before proceeding, we will introduce some notation.

\subsection{Notation}
\begin{itemize}
\item The Bachmann-Landau big $\O$ notation is used in the usual sense,
i.e., $f=\O\left(g\right)$ as $x\to\infty$ means that there exists
a constant $c>0$ such that $\vert f\left(x\right)/g\left(x\right)\vert\leq c$
holds for all $x$ sufficiently large. In order to ease the notation,
we will usually not keep track of the dependence of the implied constant
$c$ on other parameters. In particular the dependence on a (fixed)
test function $f$ shall not be explicitly mentioned.
\item We will also use the Vinogradov symbols $\ll$ (and $\gg$) in their
usual meaning in analytic number theory, that is, the statement $f\ll g$
denotes that $f=\O\left(g\right)$.
\item We will use the standard notation $e\left(z\right)=e^{2\pi iz}.$
\item We denote by $\left[k\right]\eqdef\left\{ 1,\ldots,k\right\} $ the
set of the first $k$ natural numbers. 
\item Throughout the rest of the manuscript, we denote the shifted unit
interval with left end point at $A>0$ by
\begin{equation}
\J=\J\left(A\right)\eqdef\left[A,A+1\right].\label{def: J}
\end{equation}
\end{itemize}

\subsection{Tools from harmonic analysis}

The bulk of our work is concerned with estimating one-dimensional
oscillatory integrals
\[
I\left(\phi,\J\right)\eqdef\int_{\J}e\left(\phi\left(\alpha\right)\right)\,\mathrm{d\alpha}
\]
where $\phi:\J\rightarrow\mathbb{R}$ is a $C^{\infty}$-function
(so called \emph{phase function}). The phase functions that we encounter
are of the shape 
\begin{equation}
\phi\left(\alpha\right)=\phi\left(\bfu,\bfx,\alpha\right)=\sum_{i\leq d}u_{i}x_{i}^{\alpha},\qquad\bfu=\left(u_{1},\ldots,u_{d}\right),\quad\bfx=\left(x_{1},\ldots,x_{d}\right).\label{eq: rough shape of phi}
\end{equation}
We wish to establish a bound with good dependence on the parameters
$\bfu,\bfx$ --- most importantly on the maximum norm $\left\Vert \bfx\right\Vert _{\infty}=\max_{i\leq d}\left|x_{i}\right|$.
To this end, the following well-known lemma is useful. 
\begin{lem}[Van der Corput's lemma]
\label{lem: Classic Van der Corput}Let $\phi:\J\rightarrow\mathbb{R}$
be a $C^{\infty}$-function. Fix $d\geq1$, and suppose that we have
$\bigl|\phi^{\left(d\right)}(\alpha)\bigr|\ge\lambda>0$ throughout
the interval $\J$. If $d=1$, suppose in addition that $\phi'$ is
monotone on $\J$. Then there exists a constant $C_{d}>0$ depending
only on $d$ such that
\[
\left|I\left(\phi,\J\right)\right|\le C_{d}\lambda^{-1/d}.
\]
\end{lem}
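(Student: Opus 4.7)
\medskip

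The plan is to prove the lemma by induction on $d$, which is the classical route for van der Corput estimates.

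For the base case $d=1$, I would apply integration by parts, writing
\[
I(\phi,\J)=\int_{\J}\frac{1}{2\pi i\phi'(\alpha)}\,\frac{\mathrm{d}}{\mathrm{d}\alpha}e(\phi(\alpha))\,\mathrm{d}\alpha.
\]
The boundary terms are bounded by $1/(\pi\lambda)$ using $|\phi'|\ge\lambda$. For the remaining integral, I would exploit the monotonicity hypothesis on $\phi'$: it forces $\phi''$ to have constant sign, so $\phi''/(\phi')^{2}$ has constant sign, and hence
\[
\int_{\J}\left|\frac{\phi''(\alpha)}{(\phi'(\alpha))^{2}}\right|\mathrm{d}\alpha=\left|\frac{1}{\phi'(A+1)}-\frac{1}{\phi'(A)}\right|\le\frac{2}{\lambda}.
\]
Combining gives $|I(\phi,\J)|\ll\lambda^{-1}$, as required.

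For the inductive step, assume the claim holds up to order $d-1$, and suppose $|\phi^{(d)}(\alpha)|\ge\lambda$ throughout $\J$. The key observation is that this forces $\phi^{(d-1)}$ to be strictly monotone on $\J$, so it has at most one zero $\alpha_{0}\in\J$. I would then fix a parameter $\delta>0$ (to be optimised), remove the window $W\eqdef(\alpha_{0}-\delta,\alpha_{0}+\delta)\cap\J$ from $\J$, and use the mean value theorem together with the lower bound on $\phi^{(d)}$ to conclude $|\phi^{(d-1)}(\alpha)|\ge\lambda\delta$ on $\J\setminus W$. This complement consists of at most two subintervals of $\J$, on each of which I would apply the inductive hypothesis in order $d-1$ to obtain a contribution of size $C_{d-1}(\lambda\delta)^{-1/(d-1)}$. (When $d=2$ the inductive hypothesis requires monotonicity of $\phi'$; this is automatic, since $|\phi''|\ge\lambda$ forces $\phi''$ to have constant sign.) Meanwhile, on $W$ I would simply bound the integrand trivially by $1$, contributing $|W|\le 2\delta$. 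Balancing the two contributions by choosing $\delta$ so that $\delta\asymp(\lambda\delta)^{-1/(d-1)}$, i.e.\ $\delta\asymp\lambda^{-1/d}$, produces the desired bound $|I(\phi,\J)|\ll\lambda^{-1/d}$ with a constant $C_{d}$ depending only on $d$.

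The argument is entirely standard, so I do not foresee any real obstacle; the only subtlety worth flagging is the initial inductive step $d=2$, where one must verify that the monotonicity hypothesis required by the $d=1$ base case is supplied automatically by the hypothesis $|\phi''|\ge\lambda>0$. Beyond this, the optimisation of $\delta$ is routine.
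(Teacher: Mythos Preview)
Your proof is correct and follows exactly the classical route the paper invokes: partial integration for $d=1$ and induction on $d$ (the paper simply cites Stein \cite[Ch.~VIII, Prop.~2]{Stein} for this). The only cosmetic point is the case where $\phi^{(d-1)}$ has no zero in $\J$, but then one takes $\alpha_{0}$ to be the endpoint minimising $|\phi^{(d-1)}|$ and the same window argument applies verbatim.
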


\begin{proof}
This classical bound follows from partial integration for $d=1$,
and then by induction on $d$, see Stein \cite[Ch. VIII, Prop. 2]{Stein}. 
\end{proof}
\begin{rem}
A drawback of van der Corput's lemma is that the more complicated
the phase function $\phi$ is --- bearing the shape (\ref{eq: rough shape of phi})
of $\phi$ in mind, the more difficult it is to get acceptable lower
bounds on the size of the minimum of the derivative $\phi^{\left(d\right)}$
for a given $d$. To remedy this issue, we use the following variant
of van der Corput's lemma. The key feature is that for a non-trivial
estimation of $I\left(\phi,\J\right)$, we only require that at every
point $\alpha\in\J$ at least one of the first $d$ derivatives of
$\phi$ is large --- rather than requiring that one specific derivative
is large throughout $\J$. This amounts to estimating the function
\begin{equation}
M_{d}\phi\left(\alpha\right)\eqdef\max_{1\le i\leq d}\vert\phi^{(i)}(\alpha)\vert.\label{eq:VanDef}
\end{equation}
For phrasing this variant of van der Corput's lemma, there is a small
price to pay: we need to control the number of zeros of $\phi^{\left(d\right)}$
on $\J$. 
\end{rem}

\begin{lem}
\label{lem: modified Van der Corput}Let $\phi:\J\rightarrow\mathbb{R}$
be a $C^{\infty}$-function, and let $d\geq1$. Suppose that $\phi^{\left(d\right)}$
has at most $k$ zeros, and that
\begin{equation}
M_{d}\phi\left(\alpha\right)\ge\lambda>0\label{eq:VanHypothesis}
\end{equation}
throughout the interval $\J$. If $d=1$, suppose in addition that
$\phi'$ is monotone on $\J$. Then there exists a constant $C_{d,k}>0$
depending only on $d$ and $k$ such that
\[
\left|I(\phi,\J)\right|\le C_{d,k}\lambda^{-1/d}.
\]
\end{lem}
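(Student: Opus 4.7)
I will argue by induction on $d$, leveraging the classical van der Corput lemma (Lemma~\ref{lem: Classic Van der Corput}) as the fundamental building block. The base case $d = 1$ is immediate, since the hypothesis $M_1 \phi = |\phi'| \geq \lambda$ together with the monotonicity of $\phi'$ is precisely the classical statement. For $d \geq 2$, one may assume $\lambda \geq 1$, as otherwise the trivial bound $|I(\phi, \J)| \leq |\J| = 1 \leq \lambda^{-1/d}$ already suffices.

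The main step is a two-level partition of $\J$. First, I would partition $\J$ into ``nice'' sub-intervals on which every derivative $\phi^{(i)}$ for $i = 1, \ldots, d-1$ is monotone (equivalently, $\phi^{(i)}$ has constant sign for $i = 2, \ldots, d$). This is achieved by cutting at the zeros of $\phi^{(2)}, \ldots, \phi^{(d)}$; iterated applications of Rolle's theorem starting from the at most $k$ zeros of $\phi^{(d)}$ show that $\phi^{(d-j)}$ has at most $k+j$ zeros, producing only $O(kd + d^2)$ cut-points in total, hence $O(kd + d^2)$ nice sub-intervals.

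On a single nice sub-interval $\J'$, I introduce the thresholds $\lambda_i \eqdef \lambda^{i/d}$ for $i = 1, \ldots, d$; note $\lambda_i \leq \lambda$ because $\lambda \geq 1$. For each $i \leq d-1$, monotonicity of $\phi^{(i)}$ on $\J'$ implies that $\{|\phi^{(i)}| \geq \lambda_i\} \cap \J'$ consists of at most two intervals, on each of which classical van der Corput at level $i$ contributes at most a constant times $\lambda_i^{-1/i} = \lambda^{-1/d}$ (the extra monotonicity hypothesis needed when $i = 1$ is inherited from $\J'$). The leftover ``middle'' set $L \eqdef \bigcap_{i \leq d-1}\{|\phi^{(i)}| < \lambda_i\} \cap \J'$ is an intersection of single intervals and is therefore itself a single interval; moreover, the hypothesis $M_d \phi \geq \lambda$ combined with $|\phi^{(i)}| < \lambda_i \leq \lambda$ on $L$ for every $i \leq d-1$ forces $|\phi^{(d)}| \geq \lambda$ throughout $L$, so classical van der Corput at level $d$ contributes the same bound $\lambda^{-1/d}$. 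Summing the $O(d)$ pieces per nice sub-interval against the $O(kd + d^2)$ nice sub-intervals produces $|I(\phi, \J)| \leq C_{d,k} \lambda^{-1/d}$.

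The main obstacle will be the combinatorial bookkeeping: one must verify the iterated Rolle zero bounds at each level, check that the leftover set $L$ is genuinely a single interval (from the fact that the sub-level set of a monotone function is connected), and ensure that resolving small overlaps between the $\{|\phi^{(i)}| \geq \lambda_i\}$ into a disjoint cover only inflates the constant by a polynomial factor in $d$. With these details in place, the constant $C_{d,k}$ depends polynomially on $k$ and $d$, as required.
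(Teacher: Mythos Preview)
Your argument is correct and rests on the same skeleton as the paper's proof: first cut $\J$ at the zeros of $\phi^{(2)},\dots,\phi^{(d)}$ (with the zero-count controlled by Rolle's theorem) so that every $\phi^{(i)}$, $1\le i\le d-1$, is monotone on each piece; then on each piece isolate the sub-interval where all lower derivatives are small and observe that $|\phi^{(d)}|\ge\lambda$ must hold there. The difference lies in how you handle the complement of that sub-interval. The paper simply invokes the inductive hypothesis at level $d-1$ with the \emph{same} threshold $\lambda$, obtaining the cruder bound $\lambda^{-1/(d-1)}$ on the outer pieces and then absorbing it into $\lambda^{-1/d}$ via the standing assumption $\lambda\ge 1$. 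You instead bypass the induction by introducing graded thresholds $\lambda_i=\lambda^{i/d}$ and applying the classical lemma directly at each level, so that every piece already contributes $\lambda^{-1/d}$. Your route trades the paper's clean recursive structure for a one-shot argument with a slightly sharper accounting at each level; both produce a constant polynomial in $d$ and $k$. One cosmetic point: despite your opening sentence, your argument for $d\ge 2$ never actually uses an inductive hypothesis, so it would read more honestly as a direct proof.
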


\begin{proof}
Since $\phi^{\left(d\right)}$ has at most $k$ zeros, Rolle's theorem
implies that the number of zeros of any lower derivative $\phi^{\left(i\right)}$,
$1\le i\le d-1$, is at most $k+d-i\le k+d-1$. Hence, by splitting
the integral $I\left(\phi,\J\right)$ into $\O_{d,k}\left(1\right)$
integrals, we can assume without loss of generality that for any $1\le i\le d-1$
the function $\phi^{\left(i\right)}$ is monotone.

We will now prove by induction on $d$ that
\begin{equation}
\left|I\left(\phi,\J\right)\right|\ll_{d}\lambda^{-1/d}\label{eq: Van der Corput with crossing}
\end{equation}
where the implied constant in (\ref{eq: Van der Corput with crossing})
depends only on $d$. The case $d=1$ follows directly from Lemma
\ref{lem: Classic Van der Corput}. Assume now correctness for $d-1$
where $d\ge2$. Let $\left(a,b\right)$ be the (possibly empty) interval
of $\alpha\in\J$ satisfying
\[
M_{d-1}\phi\left(\alpha\right)=\max\limits _{1\le i\leq d-1}\vert\phi^{(i)}(\alpha)\vert<\lambda.
\]
We have
\begin{equation}
\left|I\left(\phi,\J\right)\right|\le\left|\int_{A}^{a}e\left(\phi\left(\alpha\right)\right)\,\mathrm{d\alpha}\right|+\left|\int_{a}^{b}e\left(\phi\left(\alpha\right)\right)\,\mathrm{d\alpha}\right|+\left|\int_{b}^{A+1}e\left(\phi\left(\alpha\right)\right)\,\mathrm{d\alpha}\right|.\label{eq:TriangleBound}
\end{equation}
By the assumption (\ref{eq:VanHypothesis}), the lower bound $\left|\phi^{\left(d\right)}\left(\alpha\right)\right|\ge\lambda$
holds throughout the interval $\left(a,b\right)$. Therefore Lemma
\ref{lem: Classic Van der Corput} implies that 
\begin{equation}
\left|\int_{a}^{b}e\left(\phi\left(\alpha\right)\right)\,\mathrm{d\alpha}\right|\ll_{d}\lambda^{-1/d}.\label{eq:kIsBigBound}
\end{equation}
Outside the interval $\left(a,b\right),$ we have $M_{d-1}\phi\left(\alpha\right)\ge\lambda$.
Thus, by the induction hypothesis, we infer that
\begin{align}
\left|\int_{A}^{a}e\left(\phi\left(\alpha\right)\right)\,\mathrm{d\alpha}\right|+\left|\int_{b}^{A+1}e\left(\phi\left(\alpha\right)\right)\,\mathrm{d\alpha}\right| & \ll_{d}\lambda^{-1/\left(d-1\right)}.\label{eq:kIsSmallBound}
\end{align}
Note that we may suppose that $\lambda\geq1$, since $\left|I(\phi,\J)\right|\leq1$
and, for $\lambda<1$, the desired bound plainly follows from $1<\lambda^{-1/d}$.
Now inserting the bounds (\ref{eq:kIsBigBound}) and (\ref{eq:kIsSmallBound})
into (\ref{eq:TriangleBound}) (the former dominates the latter due
to our assumption $\lambda\ge1$), gives the claimed bound (\ref{eq: Van der Corput with crossing}).
\end{proof}
The following lemma indicates how oscillatory integrals arise in our
analysis. For its proof, and later reference, we recall that for any
smooth compactly supported function $g:\mathbb{R}\rightarrow\mathbb{R}$,
the Fourier transform $\widehat{g}$ of $g$ decays rapidly in the
sense that for any arbitrarily large $t>0$ we have that
\begin{equation}
\widehat{g}\left(\xi\right)=\O\bigl(\xi^{-t}\bigr)\label{eq: fast Fourier decay}
\end{equation}
as $\left|\xi\right|\to\infty$.
\begin{lem}
\label{lem:TruncationLemma}Let $f\in C_{c}^{\infty}\left(\mathbb{R}\right)$,
and let $\epsilon>0$. Then for all $t>0$ we have
\begin{equation}
R_{2}\left(f,\alpha,N\right)=\frac{1}{N^{2}}\sum_{\left|n\right|\le N^{1+\epsilon}}\widehat{f}\left(\frac{n}{N}\right)\sum_{1\le x_{1}\ne x_{2}\le N}e\left(n\left(x_{1}^{\alpha}-x_{2}^{\alpha}\right)\right)+\O\bigl(N^{-t}\bigr)\label{eq: truncated Fourier representation}
\end{equation}
as $N\to\infty$.
\end{lem}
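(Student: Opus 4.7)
The plan is to apply Poisson summation to the inner sum over $m\in\mathbb{Z}$ in the definition (\ref{eq:PairCorrelationDef}) of $R_{2}\left(f,\alpha,N\right)$, and then truncate the resulting dual sum using the rapid decay (\ref{eq: fast Fourier decay}) of $\widehat{f}$.

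Concretely, for fixed $1\le x_{1}\ne x_{2}\le N$ and $\alpha$, set $\Phi\eqdef x_{1}^{\alpha}-x_{2}^{\alpha}$ and consider the function $h\left(m\right)\eqdef f\left(N\left(\Phi-m\right)\right)$, viewed as a Schwartz function in $m\in\mathbb{R}$. A change of variables $u=N\left(\Phi-m\right)$ gives
\[
\widehat{h}\left(n\right)=\int_{\mathbb{R}}f\left(N\left(\Phi-m\right)\right)e\left(-nm\right)\,\mathrm{d}m=\frac{1}{N}e\left(-n\Phi\right)\widehat{f}\left(-n/N\right).
\]
Applying the Poisson summation formula $\sum_{m\in\mathbb{Z}}h\left(m\right)=\sum_{n\in\mathbb{Z}}\widehat{h}\left(n\right)$ and relabelling $n\mapsto-n$ yields
\[
\sum_{m\in\mathbb{Z}}f\left(N\left(x_{1}^{\alpha}-x_{2}^{\alpha}-m\right)\right)=\frac{1}{N}\sum_{n\in\mathbb{Z}}\widehat{f}\left(n/N\right)e\left(n\left(x_{1}^{\alpha}-x_{2}^{\alpha}\right)\right).
\]
Inserting this into (\ref{eq:PairCorrelationDef}) immediately produces the desired expression but with the $n$-sum extended over all of $\mathbb{Z}$.

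It then remains to show that the tail $\left|n\right|>N^{1+\epsilon}$ contributes only $\O\bigl(N^{-t}\bigr)$. I would bound the inner double sum over $x_{1}\ne x_{2}$ trivially by $N^{2}$ (every summand has modulus one) and invoke the rapid decay (\ref{eq: fast Fourier decay}) of $\widehat{f}$: for any $T>0$ one has $\widehat{f}\left(n/N\right)\ll\left(\left|n\right|/N\right)^{-T}$. Summing over $\left|n\right|>N^{1+\epsilon}$ gives
\[
\frac{1}{N^{2}}\cdot N^{2}\sum_{\left|n\right|>N^{1+\epsilon}}\left(\left|n\right|/N\right)^{-T}\ll N^{T}\cdot\bigl(N^{1+\epsilon}\bigr)^{1-T}=N^{1-\left(T-1\right)\epsilon},
\]
which is $\O\bigl(N^{-t}\bigr)$ upon choosing $T$ sufficiently large in terms of $t$ and $\epsilon$. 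Combining the main term with this tail estimate yields (\ref{eq: truncated Fourier representation}).

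There is no real obstacle here: both the Poisson summation step and the truncation are standard manipulations, and the only input needed is the Schwartz decay of $\widehat{f}$ together with the trivial bound $\bigl|\sum_{x_{1}\ne x_{2}}e\left(n\left(x_{1}^{\alpha}-x_{2}^{\alpha}\right)\right)\bigr|\le N^{2}$; no cancellation in the $x_{j}$-variables is exploited at this stage (cancellation will only be extracted later, from the $\alpha$-integration via the van der Corput estimate of Lemma \ref{lem: modified Van der Corput}).
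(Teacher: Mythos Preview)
Your proof is correct and follows essentially the same approach as the paper: apply Poisson summation to obtain the full Fourier expansion (\ref{eq: Pair Corr Fourier Rep}), then truncate using the trivial bound $N^{2}$ on the inner exponential sum together with the rapid decay (\ref{eq: fast Fourier decay}) of $\widehat{f}$. The only difference is cosmetic---you spell out the Fourier transform of $h$ explicitly, whereas the paper simply cites Poisson summation---and your tail exponent $1-(T-1)\epsilon$ agrees with the paper's $2+s-(1+\epsilon)(s-1)=1+\epsilon-\epsilon s$ up to relabelling.
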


\begin{proof}
The Poisson summation formula applied to (\ref{eq:PairCorrelationDef})
yields the identity 
\begin{equation}
R_{2}\left(f,\alpha,N\right)=\frac{1}{N^{2}}\sum_{n\in\mathbb{Z}}\widehat{f}\left(\frac{n}{N}\right)\sum_{1\le x_{1}\ne x_{2}\le N}e\left(n\left(x_{1}^{\alpha}-x_{2}^{\alpha}\right)\right),\label{eq: Pair Corr Fourier Rep}
\end{equation}
and we want to truncate the right hand side.

Due to (\ref{eq: fast Fourier decay}) and the trivial bound 
\[
\biggl|\sum_{1\le x_{1}\ne x_{2}\le N}e\left(n\left(x_{1}^{\alpha}-x_{2}^{\alpha}\right)\right)\biggr|\leq N^{2},
\]
we have 
\begin{equation}
\sum_{\left|n\right|>N^{1+\epsilon}}\widehat{f}\left(\frac{n}{N}\right)\sum_{1\le x_{1}\ne x_{2}\le N}e\left(n\left(x_{1}^{\alpha}-x_{2}^{\alpha}\right)\right)\ll N^{2+s}\sum_{n>N^{1+\epsilon}}n^{-s}\ll N^{2+s-\left(1+\epsilon\right)\left(s-1\right)}.\label{eq: tail of Fourier representation}
\end{equation}
Taking $s$ suitably large so that
\[
s-\left(1+\epsilon\right)\left(s-1\right)=1+\epsilon-\epsilon s<-t,
\]
the right hand side of (\ref{eq: tail of Fourier representation})
is $<N^{2-t}.$ This implies (\ref{eq: truncated Fourier representation}),
concluding the proof.
\end{proof}

\section{Repulsion principles}

In order to make Lemma \ref{lem: modified Van der Corput} usable
for computing the pair and higher order correlations, we need to control
the $M$-function (\ref{eq:VanDef}) of functions as in (\ref{eq: rough shape of phi}).
In the present section, we show that irrespective\emph{ }of the choice
of $\alpha$ some derivative of such a function is large.

Recall that if 
\begin{equation}
V_{d}=\begin{bmatrix}L_{1} & L_{2} & \dots & L_{d}\\
L_{1}^{2} & L_{2}^{2} & \dots & L_{d}^{2}\\
\vdots & \vdots & \ddots & \vdots\\
L_{1}^{d} & L_{2}^{d} & \dots & L_{d}^{d}
\end{bmatrix}\label{eq:VDmatrix}
\end{equation}
is the Vandermonde matrix corresponding to distinct nonzero numbers
$L_{1},.\dots,L_{d}$, then the inverse Vandermonde matrix is given
by $V_{d}^{-1}=\left[a_{ij}\right]$, where
\begin{equation}
a_{ij}=\frac{\left(-1\right)^{j-1}\sum\limits _{\substack{1\le m_{1}<\dots<m_{d-j}\le d\\
m_{1},\dots,m_{d-j}\ne i
}
}L_{m_{1}}\cdots L_{m_{d-j}}}{L_{i}\prod\limits _{\begin{subarray}{c}
1\le m\le d\\
m\ne i
\end{subarray}}\left(L_{m}-L_{i}\right)}\label{eq:inverseVD}
\end{equation}
(see, e.g., \cite[Ex. 40]{Knuth}).

We require the following lemma.
\begin{lem}
\label{lem:VDbound}Let $d\ge2$ be an integer, let
\[
2\le x_{1}<x_{2}<\dots<x_{d}\le N
\]
be real numbers, and denote $L_{i}=\log x_{i}$ ($1\le i\le d)$.
Let $V_{d}$ be the Vandermonde matrix (\ref{eq:VDmatrix}) corresponding
to the numbers $L_{1},.\dots,L_{d}$. Let $w\in\mathbb{R}^{d}$, and
denote $y=V_{d}w$. Then for all $\epsilon>0$, there exists a constant
$C_{d,\epsilon}>0$ depending only on $d$ and $\epsilon$ such that
\[
\left\Vert y\right\Vert _{\infty}\ge C_{d,\epsilon}\left\Vert w\right\Vert _{\infty}x_{d}^{1-d}N^{-\epsilon}\prod\limits _{m=1}^{d-1}h_{m},
\]
where $h_{m}=x_{m+1}-x_{m}$ for $1\le m\le d-1$.
\end{lem}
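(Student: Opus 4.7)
The plan is to invert the Vandermonde system and show that the inverse is not too large, so that $w=V_d^{-1}y$ forces $\|y\|_\infty$ to be at least $\|w\|_\infty$ divided by the operator norm $\|V_d^{-1}\|_\infty\eqdef\max_i\sum_j|a_{ij}|$. Thus it suffices to establish
\[
\|V_d^{-1}\|_\infty \ll_{d,\epsilon}\; x_d^{d-1}\, N^{\epsilon}\,\Big(\prod_{m=1}^{d-1}h_m\Big)^{-1}.
\]

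For the numerator of $a_{ij}$ in \eqref{eq:inverseVD}, I would use that $L_m=\log x_m\le\log N$, so each of the $\binom{d-1}{d-j}$ monomials $L_{m_1}\cdots L_{m_{d-j}}$ is bounded by $(\log N)^{d-j}\le(\log N)^{d-1}$, and ultimately $(\log N)^{d-1}\ll_\epsilon N^{\epsilon}$. Since $x_i\ge 2$, we also have $L_i\ge\log 2$, so the factor $L_i$ in the denominator is harmlessly bounded below by a constant. The entire game therefore reduces to a lower bound on
\[
\prod_{\substack{1\le m\le d\\ m\ne i}}|L_m-L_i|.
\]

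For this, I would first use the mean value theorem applied to $\log$: for any $m\ne i$,
\[
|L_m-L_i|=\bigl|\log(x_m/x_i)\bigr|\ge\frac{|x_m-x_i|}{\max(x_m,x_i)}\ge\frac{|x_m-x_i|}{x_d}.
\]
This yields
\[
\prod_{m\ne i}|L_m-L_i|\;\ge\;x_d^{-(d-1)}\prod_{m\ne i}|x_m-x_i|.
\]
The key combinatorial step is then the telescoping lower bound
\[
\prod_{m\ne i}|x_m-x_i|\;\ge\;\prod_{m=1}^{d-1}h_m,
\]
which comes from splitting into $m<i$ and $m>i$: for $m<i$, $x_i-x_m=h_m+h_{m+1}+\dots+h_{i-1}\ge h_m$, contributing $h_1\cdots h_{i-1}$; for $m>i$, $x_m-x_i=h_i+\dots+h_{m-1}\ge h_{m-1}$, contributing $h_i\cdots h_{d-1}$. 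Multiplying these gives exactly $\prod_{m=1}^{d-1}h_m$.

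Assembling these bounds yields $|a_{ij}|\ll_{d,\epsilon} x_d^{d-1}N^{\epsilon}/\prod_m h_m$, hence $\|V_d^{-1}\|_\infty$ satisfies the same bound up to a constant depending on $d$. Dividing $\|w\|_\infty\le\|V_d^{-1}\|_\infty\|y\|_\infty$ then gives the claimed inequality. The only step requiring a little care is the telescoping inequality for $\prod_{m\ne i}|x_m-x_i|$; everything else is bookkeeping using the explicit formula \eqref{eq:inverseVD}.
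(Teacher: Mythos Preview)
Your proof is correct and follows essentially the same approach as the paper: bound the entries of $V_d^{-1}$ via the explicit formula \eqref{eq:inverseVD}, estimate the numerator by $(\log N)^{d-1}\ll_\epsilon N^\epsilon$, and lower-bound $\prod_{m\ne i}|L_m-L_i|$ by $x_d^{-(d-1)}\prod_{m=1}^{d-1}h_m$ using $|\log(x_m/x_i)|\ge|x_m-x_i|/\max(x_m,x_i)$ together with the telescoping inequality. The paper phrases the logarithm estimate via $\log(1+t)\le t$ rather than the mean value theorem, but this is the same bound.
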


\begin{proof}
Let $V_{d}^{-1}=\left[a_{ij}\right]$, where $a_{ij}$ is given by
(\ref{eq:inverseVD}). For every $1\le i,j\le d$ we have
\begin{equation}
a_{ij}\ll_{d,\epsilon}N^{\epsilon}\frac{1}{\prod\limits _{\begin{subarray}{c}
1\le m\le d\\
m\ne i
\end{subarray}}\left|L_{m}-L_{i}\right|}\label{eq:aij_bound}
\end{equation}
where the implied constant in (\ref{eq:aij_bound}) depends only on
$d$ and $\epsilon$.

For all $t>-1$, we have the inequality $\log\left(1+t\right)\le t$.
So, for $m=1,\dots,i-1$, we infer
\[
\left|L_{m}-L_{i}\right|=-\log\frac{x_{m}}{x_{i}}=-\log\left(1+\frac{x_{m}-x_{i}}{x_{i}}\right)\ge\frac{x_{i}-x_{m}}{x_{i}}\ge\frac{h_{m}}{x_{d}}
\]
and, for $m=i+1,\dots,d$, we have
\[
\left|L_{m}-L_{i}\right|=-\log\frac{x_{i}}{x_{m}}=-\log\left(1+\frac{x_{i}-x_{m}}{x_{m}}\right)\ge\frac{x_{m}-x_{i}}{x_{m}}\ge\frac{h_{m-1}}{x_{d}}.
\]
Hence,
\[
a_{ij}\ll_{d,\epsilon}\frac{x_{d}^{d-1}N^{\epsilon}}{\prod\limits _{m=1}^{d-1}h_{m}}.
\]
Thus, we have found a uniform bound for the elements of $V_{d}^{-1}$,
and since all matrix norms are equivalent, we conclude that
\[
\left\Vert w\right\Vert _{\infty}=\left\Vert V_{d}^{-1}y\right\Vert _{\infty}\le\left\Vert V_{d}^{-1}\right\Vert _{\infty}\left\Vert y\right\Vert _{\infty}\ll_{d,\epsilon}\frac{x_{d}^{d-1}N^{\epsilon}}{\prod\limits _{m=1}^{d-1}h_{m}}\left\Vert y\right\Vert _{\infty}.
\]
\end{proof}
We can now bound the $M$-function (\ref{eq:VanDef}) from below for
functions of the form (\ref{eq: rough shape of phi}).
\begin{lem}
\label{lem:VanBdPhi}Let $d\ge2$ be an integer, and let $u_{1},\dots,u_{d}$
be nonzero real numbers. Given real numbers $2\le x_{1}<x_{2}<\dots<x_{d}\le N$,
we define 
\[
\phi\left(\alpha\right):=\sum_{r\leq d}u_{r}x_{r}^{\alpha}\hspace{1em}\left(\alpha\in\J=\left[A,A+1\right]\right).
\]
Furthermore, let $\epsilon>0$ and define 
\[
\lambda=N^{-\epsilon}\left|u_{d}\right|x_{d}^{A+1-d}\prod\limits _{m=1}^{d-1}h_{m}.
\]
where $h_{m}=x_{m+1}-x_{m}$ ($1\le m\le d-1)$. Then there exists
a constant $C_{d,\epsilon}>0$, depending only on $d$ and $\epsilon$,
such that
\begin{equation}
M_{d}\phi\left(\alpha\right)\ge C_{d,\epsilon}\lambda>0\label{eq:VanBdPhi}
\end{equation}
throughout the interval $\J$.
\end{lem}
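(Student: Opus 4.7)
The plan is to express the vector of derivatives $(\phi^{(i)}(\alpha))_{i=1}^{d}$ as $V_{d}$ applied to a suitable weight vector, and then invoke Lemma \ref{lem:VDbound} directly. Concretely, I first compute
\[
\phi^{(i)}(\alpha)=\sum_{r\leq d}u_{r}L_{r}^{i}x_{r}^{\alpha}\qquad (1\le i\le d),
\]
where $L_{r}=\log x_{r}$. Setting $w_{r}\eqdef u_{r}x_{r}^{\alpha}$ and reading off the Vandermonde matrix $V_{d}$ from (\ref{eq:VDmatrix}), this can be recast as the matrix--vector identity
\[
\bigl(\phi^{(1)}(\alpha),\phi^{(2)}(\alpha),\dots,\phi^{(d)}(\alpha)\bigr)^{T}=V_{d}\,w,
\]
so that $M_{d}\phi(\alpha)=\lVert V_{d}w\rVert_{\infty}$.

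Applying Lemma \ref{lem:VDbound} (with the same $L_{r}$, $h_{m}$, and $\epsilon$) immediately yields
\[
M_{d}\phi(\alpha)=\lVert V_{d}w\rVert_{\infty}\gg_{d,\epsilon}\lVert w\rVert_{\infty}\,x_{d}^{1-d}\,N^{-\epsilon}\prod_{m=1}^{d-1}h_{m}.
\]
It remains to bound $\lVert w\rVert_{\infty}$ from below. Since $\alpha\in\J=[A,A+1]$ and $x_{d}\ge 2>1$, we simply use the last coordinate:
\[
\lVert w\rVert_{\infty}\geq\lvert u_{d}\rvert x_{d}^{\alpha}\geq\lvert u_{d}\rvert x_{d}^{A}.
\]
Combining these two displays gives
\[
M_{d}\phi(\alpha)\gg_{d,\epsilon}N^{-\epsilon}\lvert u_{d}\rvert x_{d}^{A+1-d}\prod_{m=1}^{d-1}h_{m}=C_{d,\epsilon}\lambda,
\]
uniformly in $\alpha\in\J$, which is exactly the assertion (\ref{eq:VanBdPhi}).

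There is essentially no hidden obstacle here beyond Lemma \ref{lem:VDbound} itself; the only points to verify are (i) that the identity $(\phi^{(i)})_{i}=V_{d}w$ matches the convention for $V_{d}$ used in (\ref{eq:VDmatrix}), and (ii) that the crude bound $\lVert w\rVert_{\infty}\geq\lvert u_{d}\rvert x_{d}^{A}$ is uniform over $\alpha\in\J$, for which the monotonicity of $x_{d}^{\alpha}$ (using $x_{d}\geq 2$) is all that is needed. Note in particular that we only pick out the single coordinate $r=d$ when bounding $\lVert w\rVert_{\infty}$; this is why the largest point $x_{d}$ and its coefficient $u_{d}$ appear in $\lambda$, and it is what makes the bound usable downstream, where one sums over configurations with $x_{d}$ playing the role of the top variable.
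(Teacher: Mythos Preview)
Your proof is correct and follows essentially the same approach as the paper's: both express the vector of derivatives as $V_{d}w$ with $w_{r}=u_{r}x_{r}^{\alpha}$, invoke Lemma~\ref{lem:VDbound}, and then bound $\lVert w\rVert_{\infty}\ge |u_{d}|x_{d}^{\alpha}\ge |u_{d}|x_{d}^{A}$ using $x_{d}\ge2$ and $\alpha\ge A$. The only difference is that you spell out the intermediate steps and commentary more explicitly.
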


\begin{proof}
Denote $w=\left(u_{1}x_{1}^{\alpha},\dots,u_{d}x_{d}^{\alpha}\right)$,
and let $L_{i}=\log x_{i}$ ($1\le i\le d)$. Then
\[
\bigl(\phi^{\left(1\right)}\left(\alpha\right),\dots,\phi^{\left(d\right)}\left(\alpha\right)\bigr)^{T}=V_{d}w^{T},
\]
where $V_{d}$ is the Vandermonde matrix (\ref{eq:VDmatrix}) corresponding
to the numbers $L_{1},.\dots,L_{d}$.

By Lemma \ref{lem:VDbound}, we infer that
\[
M_{d}\phi\left(\alpha\right)\ge C_{d,\epsilon}\left\Vert w\right\Vert _{\infty}x_{d}^{1-d}N^{-\epsilon}\prod\limits _{m=1}^{d-1}h_{m}\ge C_{d,\epsilon}N^{-\epsilon}\left|u_{d}\right|x_{d}^{A+1-d}\prod\limits _{m=1}^{d-1}h_{m},
\]
where $C_{d,\epsilon}>0$ is a constant, depending only on $d$ and
$\epsilon$. This is exactly (\ref{eq:VanBdPhi}).
\end{proof}
We require the following simple bound on the number of zeros of functions
$\phi$ as in (\ref{eq: rough shape of phi}).
\begin{lem}
\label{lem:ZeroNumBound}Let $d\ge1$ be an integer, let $u_{1},\dots,u_{d}$
be nonzero real numbers, and let $x_{1},\dots,x_{d}$ be distinct
(strictly) positive numbers. Then the function
\[
\phi\left(\alpha\right)=\sum_{r\leq d}u_{r}x_{r}^{\alpha}\hspace{1em}\left(\alpha\in\mathbb{R}\right)
\]
has at most $d-1$ zeros. 
\end{lem}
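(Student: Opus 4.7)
The plan is to prove this by induction on $d$, using a division trick together with Rolle's theorem. The key observation is that zeros of $\phi$ coincide with zeros of $\phi/x_d^\alpha$, and differentiating the latter produces an expression of the same shape but with only $d-1$ terms.

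For the base case $d=1$, the function $\phi(\alpha) = u_1 x_1^\alpha$ has no zeros since $u_1 \neq 0$ and $x_1 > 0$, matching the claim of at most $d-1 = 0$ zeros. For the inductive step, assume the statement for $d-1$, and consider
\[
\psi(\alpha) \eqdef \frac{\phi(\alpha)}{x_d^\alpha} = u_d + \sum_{r=1}^{d-1} u_r \left(\frac{x_r}{x_d}\right)^{\alpha}.
\]
Since $x_d^\alpha > 0$, the zeros of $\phi$ are exactly the zeros of $\psi$. Differentiating gives
\[
\psi'(\alpha) = \sum_{r=1}^{d-1} u_r \log\!\left(\frac{x_r}{x_d}\right)\left(\frac{x_r}{x_d}\right)^{\alpha}.
\]
Because the $x_r$ are distinct positive numbers, the ratios $x_r/x_d$ are positive and not equal to $1$ for $r < d$, so each coefficient $u_r \log(x_r/x_d)$ is nonzero, and the bases $x_r/x_d$ are still distinct and positive. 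Thus $\psi'$ has the same structural form as $\phi$ but with only $d-1$ exponential terms.

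By the inductive hypothesis, $\psi'$ has at most $d-2$ zeros. Rolle's theorem then implies that $\psi$ has at most $d-1$ zeros, and hence so does $\phi$. There is no real obstacle here, as the argument is purely elementary; the only point to verify carefully is that the transformation preserves the hypotheses of the induction (nonzero coefficients, distinct positive bases), which we just checked.
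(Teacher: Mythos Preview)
Your proof is correct and is essentially identical to the paper's: both argue by induction on $d$, divide through by $x_d^{\alpha}$ (the paper also divides by $u_d$, which is immaterial), differentiate to obtain a sum of the same form with $d-1$ terms, and apply the inductive hypothesis together with Rolle's theorem.
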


\begin{proof}
The proof is by induction on $d$. For $d=1$ the correctness of the
statement is clear. Assume that the lemma is true for $d-1$ ($d\ge2)$,
and let
\[
\phi\left(\alpha\right)=\sum_{r\leq d}u_{r}x_{r}^{\alpha}.
\]
The zeros of $\phi$ are exactly the zeros of the function
\[
\tilde{\phi}\left(\alpha\right)=\sum_{r\leq d-1}\tilde{u}_{r}\tilde{x}_{r}^{\alpha}+1,
\]
where $\tilde{u}_{r}=\frac{u_{r}}{u_{d}}$, and $\tilde{x}_{r}=\frac{x_{r}}{x_{d}}$
($1\le r\le d-1$), since $\phi\left(\alpha\right)=u_{d}x_{d}^{\alpha}\tilde{\phi}\left(\alpha\right)$.
Moreover,
\[
\tilde{\phi}'\left(\alpha\right)=\sum_{r\leq d-1}v_{r}\tilde{x}_{r}^{\alpha},
\]
where $v_{r}=\tilde{u}_{r}\log\tilde{x}_{r}$ ($1\le i\le d-1$).

Clearly, the numbers $v_{1},\dots,v_{d-1}$ are nonzero and $\tilde{x}_{1},\dots,\tilde{x}_{d-1}$
are distinct. Therefore, by the induction hypothesis, $\tilde{\phi}'$
has at most $d-2$ zeros. Hence, by Rolle's theorem, $\tilde{\phi}$
has at most $d-1$ zeros, completing the proof.
\end{proof}
We are ready to prove the main lemma of this section, obtaining an
upper bound for integrals with phase functions of the form (\ref{eq: rough shape of phi}).
\begin{lem}
\label{lem:MainRepulsionLemma}Let $d\ge2$ be an integer, let $u_{1},\dots,u_{d}$
be nonzero real numbers, and let 
\[
2\le x_{1}<x_{2}<\dots<x_{d}\le N
\]
be real numbers. Denote 
\[
\phi\left(\alpha\right):=\sum_{r\leq d}u_{r}x_{r}^{\alpha}\hspace{1em}\left(\alpha\in\J=\left[A,A+1\right]\right).
\]
Then for all $\epsilon>0$, there exists a constant $C_{d,\epsilon}>0$,
depending only on $d$ and on $\epsilon$, such that
\begin{equation}
\left|I\left(\phi,\J\right)\right|\le C_{d,\epsilon}\lambda^{-1/d},\label{eq:repulsionBound}
\end{equation}
where
\begin{equation}
\lambda=N^{-\epsilon}\left|u_{d}\right|x_{d}^{A+1-d}\prod\limits _{m=1}^{d-1}h_{m},\label{eq:lambda_formula}
\end{equation}
and $h_{m}=x_{m+1}-x_{m}$ ($1\le m\le d-1)$.
\end{lem}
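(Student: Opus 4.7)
The plan is to deduce this lemma by combining the three preceding ingredients: the lower bound on $M_d\phi$ from Lemma \ref{lem:VanBdPhi}, the zero-count bound from Lemma \ref{lem:ZeroNumBound}, and the modified Van der Corput estimate from Lemma \ref{lem: modified Van der Corput}. All the real work has already been done in those lemmas; this proof just plugs them together.

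First, I would observe that by Lemma \ref{lem:VanBdPhi}, there is a constant $c_{d,\epsilon}>0$ such that $M_d\phi(\alpha) \ge c_{d,\epsilon}\lambda$ holds throughout $\J$, with $\lambda$ as in \eqref{eq:lambda_formula}. This verifies the hypothesis \eqref{eq:VanHypothesis} of Lemma \ref{lem: modified Van der Corput} (with $\lambda$ replaced by $c_{d,\epsilon}\lambda$, which only affects the implied constant).

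Next, I need to bound the number of zeros of $\phi^{(d)}$ on $\J$. Differentiating $d$ times gives
\[
\phi^{(d)}(\alpha) = \sum_{r\le d} u_r (\log x_r)^d\, x_r^{\alpha},
\]
which has the same form as $\phi$ with new coefficients $\tilde u_r = u_r(\log x_r)^d$. Since $x_r\ge 2$, each $\log x_r > 0$, so the $\tilde u_r$ are nonzero; the $x_r$ are distinct positive reals by hypothesis. Therefore Lemma \ref{lem:ZeroNumBound} (applied to $\phi^{(d)}$) gives that $\phi^{(d)}$ has at most $d-1$ zeros on $\J$ (indeed on all of $\mathbb{R}$).

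With both hypotheses verified, Lemma \ref{lem: modified Van der Corput}, applied with parameters $d$ and $k=d-1$, yields
\[
\left|I(\phi,\J)\right| \le C_{d,\epsilon}\,\lambda^{-1/d},
\]
which is exactly \eqref{eq:repulsionBound}. Note that the restriction $d\ge 2$ in the statement sidesteps the extra monotonicity requirement that Lemma \ref{lem: modified Van der Corput} imposes in the case $d=1$. There is no genuine obstacle here: every derivative estimate, every zero count, and every oscillatory-integral bound has been set up in the preceding lemmas precisely so that this final assembly is a one-line verification.
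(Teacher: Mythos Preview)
Your proof is correct and follows essentially the same approach as the paper: verify the lower bound on $M_d\phi$ via Lemma~\ref{lem:VanBdPhi}, bound the zeros of $\phi^{(d)}$ via Lemma~\ref{lem:ZeroNumBound}, and then invoke Lemma~\ref{lem: modified Van der Corput}. Your write-up even includes a couple of clarifying remarks (that $x_r\ge 2$ forces $\log x_r>0$, and that $d\ge 2$ bypasses the monotonicity hypothesis) that the paper leaves implicit.
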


\begin{rem}
For $d=1$, we clearly have the (sharper) bound
\[
\left|I\left(\phi,\J\right)\right|\le C\frac{1}{\left|u_{1}\right|x_{1}^{A}\log x_{1}}
\]
where $C>0$ is an absolute constant, which follows directly from
Lemma \ref{lem: Classic Van der Corput}.
\end{rem}

\begin{proof}
For any $k\ge0,$ we have
\[
\phi^{\left(k\right)}\left(\alpha\right)=\sum_{r\leq d}v_{r}x_{r}^{\alpha}
\]
where $v_{r}=u_{r}\left(\log x_{r}\right)^{k}$. Hence, by Lemma \ref{lem:ZeroNumBound},
for any $k$ the function $\phi^{\left(k\right)}$ has at most $d-1$
zeros, and in particular this is true for $\phi^{\left(d\right)}$.

By Lemma \ref{lem:VanBdPhi}, we have
\begin{equation}
M_{d}\phi\left(x\right)\gg_{d,\epsilon}\lambda>0\label{eq:van_lower_bound}
\end{equation}
throughout the interval $\J$, where $\lambda$ is as in (\ref{eq:lambda_formula}),
and the implied constant in (\ref{eq:van_lower_bound}) depends only
on $d$ and $\epsilon$. Hence, the bound (\ref{eq:repulsionBound})
follows from Lemma \ref{lem: modified Van der Corput}.
\end{proof}

\section{The pair correlation}

The goal of this section is to prove the variance bound (\ref{eq: variance bound})
for the pair correlation sum, i.e., in the case $k=2$. This will
outline the strategy for bounding the variance in the more technically
involved case $k>2$, which will be treated in the next section.

\subsection{Computing the expectation}

First, we show that the expectation of $R_{2}\left(f,\cdot,N\right)$
is asymptotic to the average of $f$.
\begin{prop}
\label{prop: Expectation}Let $f\in C_{c}^{\infty}\left(\mathbb{R}\right)$
and let $\J$ be as in (\ref{def: J}). Then for all $\epsilon>0$, 

\begin{equation}
\int_{\mathcal{J}}R_{2}\left(f,\alpha,N\right)\,\mathrm{d}\alpha=\left(1-\frac{1}{N}\right)\int_{-\infty}^{\infty}f\left(x\right)\,\mathrm{d}x+\O\left(N^{-\min\left(2,A\right)+\epsilon}\right)\label{eq: expectation pair corr}
\end{equation}
as $N\to\infty$. 
\end{prop}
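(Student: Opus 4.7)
The plan is to apply the Poisson summation identity from Lemma \ref{lem:TruncationLemma}, integrate term by term over $\J$, and identify the main contribution as coming from the frequency $n = 0$. Explicitly, for any $t > 0$,
\[
\int_\J R_2(f,\alpha,N)\,\mathrm{d}\alpha = \frac{1}{N^{2}}\sum_{|n|\le N^{1+\epsilon}}\widehat f(n/N)\sum_{1\le x_1\ne x_2\le N}I(n,x_1,x_2) + O(N^{-t}),
\]
where $I(n,x_1,x_2):=\int_\J e(n(x_1^\alpha-x_2^\alpha))\,\mathrm{d}\alpha$. The $n = 0$ contribution is $\widehat f(0)\cdot N(N-1)/N^{2}=(1-1/N)\int_{-\infty}^{\infty}f(x)\,\mathrm{d}x$, matching the main term exactly. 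It therefore suffices to bound the total $n\ne 0$ contribution by $O(N^{-\min(2,A)+\epsilon})$.

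I would bound each individual integral $I(n, x_1, x_2)$ via Van der Corput's lemma with $d = 1$. Using the symmetries $I(n, x_1, x_2) = \overline{I(n, x_2, x_1)} = \overline{I(-n, x_1, x_2)}$, I may assume $n > 0$ and $x_1 < x_2$. Write $\phi(\alpha) = n(x_1^\alpha - x_2^\alpha)$; the key facts are that (i) $\phi''(\alpha) = n(x_1^\alpha (\log x_1)^{2} - x_2^\alpha(\log x_2)^{2})$ has constant sign on $\J$, since $x^\alpha(\log x)^{2}$ is strictly increasing in $x \ge 1$, so $\phi'$ is monotone on $\J$; (ii) for $x_1 < x_2$ with $x_1 \ge 2$, $\phi'(\alpha)$ does not vanish for any $\alpha > 0$, because $(x_2/x_1)^\alpha > 1 > \log x_1/\log x_2$; and (iii) the function $g(\alpha):=x_2^\alpha\log x_2 - x_1^\alpha\log x_1$ is increasing in $\alpha$, whence $|\phi'(\alpha)| \ge ng(A)$ throughout $\J$, and the mean value theorem applied to $x\mapsto x^A\log x$ gives $g(A)\gg x_\star^{A-1}(x_2-x_1)$ with $x_\star=x_1$ for $A\ge 1$ and $x_\star=x_2$ for $0<A<1$. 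Lemma \ref{lem: Classic Van der Corput} with $d=1$ then yields
\[
|I(n,x_1,x_2)| \ll \frac{1}{n\, x_\star^{A-1}(x_2-x_1)}.
\]
The boundary case $x_1 = 1$ reduces $\phi$ to a single exponential, and $|\phi'| \ge n x_2^A \log 2$ gives the even sharper $|I| \ll 1/(n x_2^A)$.

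Finally I would sum these bounds. Rapid decay of $\widehat f$ gives $\sum_{0<n\le N^{1+\epsilon}}|\widehat f(n/N)|/n \ll \log N$, and the remaining double sum over $2\le x_1<x_2\le N$ of $1/(x_\star^{A-1}(x_2-x_1))$ factors into $\sum_{h=1}^{N} h^{-1}=O(\log N)$ times either $\sum_{x_1}x_1^{1-A}$ (when $A\ge 1$) or $\sum_{x_2}x_2^{1-A}$ (when $0<A<1$); in either case, the outer sum is $O(1)$, $O(\log N)$, or $O(N^{2-A})$ according as $A>2$, $A=2$, or $A<2$. Dividing by $N^{2}$ and absorbing logarithms into $N^\epsilon$ yields the promised $O(N^{-\min(2,A)+\epsilon})$ error, up to the harmless truncation error $O(N^{-t})$ with $t$ large. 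The principal obstacle is that a direct application of Lemma \ref{lem:MainRepulsionLemma} with $d=2$ only produces $|I|\ll N^{\epsilon}|n|^{-1/2}(x_2-x_1)^{-1/2}x_2^{(1-A)/2}$, which is too weak to sum to the required savings; exploiting the monotonicity and non-vanishing of $\phi'$ to invoke the sharper $d=1$ Van der Corput estimate is what delivers the optimal exponent.
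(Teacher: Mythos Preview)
Your proof is correct and follows essentially the same route as the paper: apply Lemma~\ref{lem:TruncationLemma}, isolate the $n=0$ term as the main contribution, and for $n\ne 0$ invoke van der Corput with $d=1$ after checking that $\phi'$ is monotone and nonvanishing. The paper packages the oscillatory-integral estimate into a separate Lemma~\ref{lem:OscillatingPair} and bounds $x_{1}^{A}\log x_{1}-x_{2}^{A}\log x_{2}\ge x_{1}^{A-1}h$ via the inequality $\log(1+t)\le t$ (using the larger variable uniformly in $A$), whereas you use the mean value theorem and split on $A\gtrless 1$; these differences are cosmetic and lead to the same summation.
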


For the proof of Proposition \ref{prop: Expectation} and for later
reference, we require the subsequent lemma. 
\begin{lem}
\label{lem:OscillatingPair}If $n\neq0$ is a real number, then for
all $\epsilon>0$,
\begin{equation}
\frac{1}{N^{2}}\sum_{1\le x_{1}\ne x_{2}\le N}\left|\int_{\J}e\left(n\left(x_{1}^{\alpha}-x_{2}^{\alpha}\right)\right)\,\mathrm{d}\alpha\right|=\O_{\epsilon}\left(\frac{N^{-\min\left(2,A\right)+\epsilon}}{\left|n\right|}\right)\label{eq: oscillating integral expectation}
\end{equation}
as $N\to\infty$, where the implied constant in (\ref{eq: oscillating integral expectation})
depends only on $\epsilon$.
\end{lem}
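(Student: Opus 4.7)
The plan is to bound each oscillatory integral $I(\phi,\J)$, where $\phi(\alpha)=n(x_{1}^{\alpha}-x_{2}^{\alpha})$, using the classical $d=1$ van der Corput bound (Lemma \ref{lem: Classic Van der Corput}). It is important to use $d=1$ rather than the repulsion bound of Lemma \ref{lem:MainRepulsionLemma} with $d=2$: the latter would produce only $|n|^{-1/2}$-decay, whereas the claim demands the full $|n|^{-1}$. By the symmetry $(x_{1},x_{2})\leftrightarrow(x_{2},x_{1})$ I may restrict to $x_{1}>x_{2}$, picking up a factor of $2$.

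For $x_{2}\geq 2$, I first observe that $\phi'(\alpha)=n(x_{1}^{\alpha}\log x_{1}-x_{2}^{\alpha}\log x_{2})$ is monotone on $\J$: its derivative $\phi''(\alpha)=n(x_{1}^{\alpha}(\log x_{1})^{2}-x_{2}^{\alpha}(\log x_{2})^{2})$ has constant sign, since $y\mapsto y^{\alpha}(\log y)^{k}$ is strictly increasing on $[2,\infty)$ for all $\alpha>0$ and $k\geq 0$. A lower bound on $|\phi'|$ then follows from the mean value theorem applied to $g(y)=y^{\alpha}\log y$: since $g'(y)=y^{\alpha-1}(\alpha\log y+1)\gg y^{\alpha-1}$ for $y\geq 2$, one obtains $|\phi'(\alpha)|\gg|n|\,x_{2}^{A-1}(x_{1}-x_{2})$ uniformly on $\J$. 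Lemma \ref{lem: Classic Van der Corput} consequently yields
\[
|I(\phi,\J)|\ll\frac{1}{|n|\,x_{2}^{A-1}(x_{1}-x_{2})}.
\]
The boundary case $x_{2}=1$ is easier: here $\phi'(\alpha)=n\,x_{1}^{\alpha}\log x_{1}$ satisfies $|\phi'(\alpha)|\geq|n|\,x_{1}^{A}\log 2$, giving $|I(\phi,\J)|\ll 1/(|n|\,x_{1}^{A})$, whose total contribution turns out to be negligible against the main term.

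It remains to sum. For each fixed $x_{2}\geq 2$, substituting $m=x_{1}-x_{2}$ produces $\sum_{m=1}^{N-x_{2}}m^{-1}\ll\log N$, and then
\[
\sum_{x_{2}=2}^{N}\frac{\log N}{|n|\,x_{2}^{A-1}}\ll\frac{\log^{2}N}{|n|}\cdot N^{\max(0,\,2-A)},
\]
handled by the three cases $A>2$, $A=2$, $A<2$ separately. Dividing through by $N^{2}$ yields the asserted bound $O_{\epsilon}(N^{-\min(2,A)+\epsilon}/|n|)$.

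The sole conceptual subtlety is the choice between first- and second-derivative van der Corput bounds: the sharp $|n|^{-1}$ saving hinges on exploiting the monotonicity of $\phi'$ on $\J$ directly, and any coarser application of the repulsion machinery would lose a square root in $|n|$. Once this is in place, the summation over $(x_{1},x_{2})$ is a straightforward dyadic/harmonic computation.
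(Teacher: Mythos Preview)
Your approach is essentially identical to the paper's: restrict to $x_1>x_2$, apply the $d=1$ van der Corput bound using the monotonicity of $\phi'$, and sum. There is one small slip worth noting: your lower bound $|\phi'(\alpha)|\gg|n|\,x_{2}^{A-1}(x_{1}-x_{2})$ via the mean value theorem requires the intermediate point $\xi\in(x_2,x_1)$ to satisfy $\xi^{\alpha-1}\gg x_2^{A-1}$, which is only valid when $A\ge1$; for $0<A<1$ the exponent $\alpha-1$ may be negative and the inequality reverses. The paper instead derives the uniform bound
\[
x_1^{A}\log x_1-x_2^{A}\log x_2\ \ge\ x_1^{A}\bigl(\log x_1-\log x_2\bigr)\ \ge\ x_1^{A-1}(x_1-x_2)
\]
directly from $-\log(1-h/x_1)\ge h/x_1$, which works for all $A>0$ and also absorbs your separate treatment of $x_2=1$. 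Replacing $x_2^{A-1}$ by $x_1^{A-1}$ in your bound fixes the issue, and the remaining summation is the same as yours.
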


\begin{proof}
By relabelling if needed, we can assume that the summation in (\ref{eq: oscillating integral expectation})
is over $x_{1}>x_{2}$. Consider the phase function 
\[
\phi\left(\alpha\right)=\phi\left(n,x_{1},x_{2},\alpha\right)\eqdef n\left(x_{1}^{\alpha}-x_{2}^{\alpha}\right).
\]
Note that the first derivative
\[
\phi'\left(\alpha\right)=n\left(x_{1}^{\alpha}\log x_{1}-x_{2}^{\alpha}\log x_{2}\right)
\]
 is monotone and nonzero on $\J$. Hence, Lemma \ref{lem: Classic Van der Corput}
yields 
\begin{align}
I\left(\phi,\J\right) & \ll\frac{1}{\min_{\alpha\in\J}\left|\phi'\left(\alpha\right)\right|}=\frac{1}{\left|n\right|}\frac{1}{x_{1}^{A}\log x_{1}-x_{2}^{A}\log x_{2}}\label{eq:Exp_I_upper_bound}
\end{align}
where the implied constant in (\ref{eq:Exp_I_upper_bound}) is absolute.

Let $h\eqdef x_{1}-x_{2}$. By the bound $\log\left(1+t\right)\le t$,
we have
\[
x_{1}^{A}\log x_{1}-x_{2}^{A}\log x_{2}\ge x_{1}^{A}\left(\log x_{1}-\log x_{2}\right)=-x_{1}^{A}\log\left(1-\frac{h}{x_{1}}\right)\ge x_{1}^{A-1}h.
\]
Therefore,
\begin{equation}
\sum_{1\le x_{2}<x_{1}\le N}\frac{1}{x_{1}^{A}\log x_{1}-x_{2}^{A}\log x_{2}}\le\sum_{1<x_{1}\le N}\frac{1}{x_{1}^{A-1}}\sum_{1\le h<x_{1}}\frac{1}{h}=\O_{\epsilon}\left(N^{-\min\left(0,A-2\right)+\epsilon}\right)\label{eq:IntSumBound}
\end{equation}
where the implied constant depends only on $\epsilon$. Thus, (\ref{eq: oscillating integral expectation})
follows from (\ref{eq:Exp_I_upper_bound}) and (\ref{eq:IntSumBound}).
\end{proof}
Next, we prove Proposition \ref{prop: Expectation}.
\begin{proof}[Proof of Proposition \ref{prop: Expectation}]
Recall that by Lemma \ref{lem:TruncationLemma}, for all $t>0$ we
have
\begin{align*}
R_{2}\left(f,\alpha,N\right) & =\frac{1}{N^{2}}\sum_{\left|n\right|\le N^{1+\epsilon}}\widehat{f}\left(\frac{n}{N}\right)\sum_{1\le x_{1}\ne x_{2}\le N}e\left(n\left(x_{1}^{\alpha}-x_{2}^{\alpha}\right)\right)+\O\bigl(N^{-t}\bigr)\\
 & =\left(1-\frac{1}{N}\right)\int_{-\infty}^{\infty}f\left(x\right)\,\mathrm{d}x+\frac{1}{N^{2}}\sum_{1\le\left|n\right|\le N^{1+\epsilon}}\widehat{f}\left(\frac{n}{N}\right)\sum_{1\le x_{1}\ne x_{2}\le N}e\left(n\left(x_{1}^{\alpha}-x_{2}^{\alpha}\right)\right)\\
 & +\O\bigl(N^{-t}\bigr).
\end{align*}
Integrating over $\alpha$, we have a bound ready for the summation
over $x_{1},x_{2}$ thanks to (\ref{eq: oscillating integral expectation}).
Using $\widehat{f}\ll1$, this yields
\begin{align*}
\int_{\mathcal{J}}R_{2}\left(f,\alpha,N\right)\,\mathrm{d}\alpha-\left(1-\frac{1}{N}\right)\int_{-\infty}^{\infty}f\left(x\right)\,\mathrm{d}x & \ll N^{-\min\left(2,A\right)+\epsilon/2}\sum_{1\leq\left|n\right|\leq N^{1+\epsilon}}\frac{1}{\left|n\right|}+N^{-t}.
\end{align*}
Choosing $t$ large enough will give our claim.
\end{proof}

\subsection{Proof of Theorem \ref{thm: variance bound} for \texorpdfstring{$k=2$}{k=2}}

We can now proceed with the proof of the bound (\ref{eq: variance bound})
for the pair correlation sum $R_{2}$, obtaining Theorem \ref{thm: variance bound}
in the particular case $k=2$. 
\begin{proof}[Proof of Theorem \ref{thm: variance bound} for $k=2$]
Let $\epsilon>0$. We denote by $\mathcal{S}=\mathcal{S}\left(N,\epsilon\right)$
the set of tuples $\bfz=\left(\bfn,\bfx\right)$ consisting of all
$\bfn=\left(n,-n,m,-m\right)\in\mathbb{Z}_{\ne0}^{4}$ satisfying
$\left\Vert \bfn\right\Vert _{\infty}\leq N^{1+\epsilon}$, and all
$\bfx=\left(x_{1},x_{2},y_{1},y_{2}\right)\in\mathbb{Z}_{>0}^{4}$
satisfying $\left\Vert \bfx\right\Vert _{\infty}\le N$ and
\begin{equation}
x_{1}>x_{2},\,y_{1}>y_{2},\hspace{1em}x_{1}=\left\Vert \bfx\right\Vert _{\infty}.\label{eq: ordering}
\end{equation}

From (\ref{eq: truncated Fourier representation}), the fact that
$\widehat{f}\ll1$, and relabelling, we deduce that for all $t>0$
\begin{align}
\mathrm{Var}\left(R_{2}\left(f,\cdot,N\right),\J\right) & =\int_{\mathcal{J}}\left(R_{2}\left(f,\alpha,N\right)-\left(1-\frac{1}{N}\right)\int_{-\infty}^{\infty}f\left(x\right)\,\mathrm{d}x\right)^{2}\,\mathrm{d}\alpha\label{eq:variance_basic_bound}\\
 & \ll\frac{1}{N^{4}}\sum_{\bfz\in\mathcal{S}}\left|I\left(\phi\left(\bfz,\cdot\right),\J\right)\right|+N^{-t}\nonumber 
\end{align}
with the phase function
\[
\phi\left({\bf z},\alpha\right)=n\left(x_{1}^{\alpha}-x_{2}^{\alpha}\right)-m\left(y_{1}^{\alpha}-y_{2}^{\alpha}\right).
\]

To proceed further, we split the parameter set $\calS$ into three
different regimes depending on several degeneracy conditions. Let
\begin{align*}
\mathcal{S}^{1}\eqdef & \left\{ \bfz\in\mathcal{S}:\,n=m,\,\#\left\{ x_{1},x_{2},y_{1},y_{2}\right\} <4\right\} ,\\
\mathcal{S}^{2}:= & \left\{ \bfz\in\mathcal{S}\setminus\mathcal{S}^{1}:\,x_{1}\ne y_{1}\right\} ,\\
\mathcal{S}^{3}:= & \left\{ \bfz\in\mathcal{S}\setminus\mathcal{S}^{1}:\,x_{1}=y_{1}\right\} ,
\end{align*}
so that
\[
\calS=\bigsqcup_{i\leq3}\calS^{i}.
\]
Further, we associate to each $\mathcal{S}^{i}$, $i\leq3$, the term
\[
T_{i}\eqdef\sum_{\bfz\in\mathcal{S}^{i}}\Bigl|I\left(\phi\left(\bfz,\cdot\right),\J\right)\Bigr|.
\]
Inserting the definition of $T_{i}$ into (\ref{eq:variance_basic_bound})
yields
\begin{equation}
\mathrm{Var}\left(R_{2}\left(f,\cdot,N\right),\J\right)\ll N^{-4}\sum_{r\leq3}T_{r}+N^{-t},\label{eq: variance in terms of Ti}
\end{equation}
and for verifying (\ref{eq: variance bound}) (when $k=2$) it is
enough to establish that for each $r$ we have $T_{r}\ll N^{4-\rho}$
for some $\rho>0$. We estimate the terms $T_{r}$ in order of their
index $r$.\uline{}\\
\\
\uline{Bounding \mbox{$T_{1}$}:} Let
\begin{align*}
\mathcal{S}^{1,1} & \eqdef\left\{ \bfz\in\mathcal{S}^{1}:\,\,\#\left\{ x_{1},x_{2},y_{1},y_{2}\right\} =2\right\} ,\\
\mathcal{S}^{1,2} & \eqdef\left\{ \bfz\in\mathcal{S}^{1}:\,\,\#\left\{ x_{1},x_{2},y_{1},y_{2}\right\} =3\right\} 
\end{align*}
so that
\[
T_{1}=\sum_{\bfz\in\mathcal{S}^{1,1}}\left|I\left(\phi\left(\bfz,\cdot\right),\J\right)\right|+\sum_{\bfz\in\mathcal{S}^{1,2}}\left|I\left(\phi\left(\bfz,\cdot\right),\J\right)\right|.
\]

For $\bfz\in\mathcal{S}^{1,1}$, we have $x_{1}=y_{1}$ and $x_{2}=y_{2}$,
and the phase function vanishes. Hence,
\[
\sum_{\bfz\in\mathcal{S}^{1,1}}\left|I\left(\phi\left(\bfz,\cdot\right),\J\right)\right|=\sum_{1\le\left|n\right|\le N^{1+\epsilon}}\sum_{1\le x_{2}<x_{1}\le N}1\ll N^{3+\epsilon}.
\]
For $\bfz\in\mathcal{S}^{1,2}$, we can assume without loss of generality
that $x_{1}=y_{1}$ and $x_{2}\ne y_{2}$. The phase function then
simplifies to the function
\[
\alpha\mapsto n\left(y_{2}^{\alpha}-x_{2}^{\alpha}\right).
\]
Therefore,
\begin{align*}
\sum_{\bfz\in\mathcal{S}^{1,2}}\left|I\left(\phi\left(\bfz,\cdot\right),\J\right)\right| & \ll\sum_{1\le\left|n\right|\le N^{1+\epsilon}}\sum_{1\le x_{1}\le N}\sum_{1\le x_{2}\ne y_{2}\le N}\left|I\bigl(\alpha\mapsto n\left(y_{2}^{\alpha}-x_{2}^{\alpha}\right),\J\bigr)\right|\\
 & =N\sum_{1\le\left|n\right|\le N^{1+\epsilon}}\sum_{1\le x_{2}\ne y_{2}\le N}\left|I\bigl(\alpha\mapsto n\left(y_{2}^{\alpha}-x_{2}^{\alpha}\right),\J\bigr)\right|.
\end{align*}
We apply Lemma \ref{lem:OscillatingPair} to deduce that 
\[
\sum_{1\le\left|n\right|\le N^{1+\epsilon}}\sum_{1\le x_{2}\ne y_{2}\le N}\left|I\bigl(\alpha\mapsto n\left(y_{2}^{\alpha}-x_{2}^{\alpha}\right),\J\bigr)\right|\ll N^{2-\min\left(2,A\right)+\epsilon}.
\]
Hence, 
\begin{equation}
T_{1}\ll N^{3+\epsilon}+N^{3-\min\left(2,A\right)+\epsilon}\ll N^{3+\epsilon}.\label{eq: bound on T1}
\end{equation}
\uline{}\\
\uline{Bounding \mbox{$T_{2}$}:} For $2\le d\le4$, let
\begin{align*}
\mathcal{S}^{2,d} & \eqdef\left\{ \bfz\in\mathcal{S}^{2}:\,\#\left(\left\{ x_{1},x_{2},y_{1},y_{2}\right\} \setminus\left\{ 1\right\} \right)=d\right\} ,
\end{align*}
so that
\begin{equation}
T_{2}=\sum_{2\leq d\leq4}\sum_{\bfz\in\mathcal{S}^{2,d}}\left|I\left(\phi\left(\bfz,\cdot\right),\J\right)\right|.\label{eq:T_2_decomp}
\end{equation}

For ${\bf z}\in\mathcal{S}^{2,d}$, the phase function $\phi$ consists
of $d$ non-constant terms with non-vanishing coefficients. Since
$x_{1}\ne y_{1},$ the leading coefficient of $x_{1}^{\alpha}$ is
$n$. Invoking Lemma \ref{lem:MainRepulsionLemma} (recall that $x_{1}=\left\Vert \bfx\right\Vert _{\infty})$,
we obtain
\begin{align*}
\sum_{\bfz\in\mathcal{S}^{2,d}}\left|I\left(\phi\left(\bfz,\cdot\right),\J\right)\right| & \ll N^{\epsilon/d}\sum_{1\le\left|n\right|,\left|m\right|\le N^{1+\epsilon}}\left|n\right|^{-\frac{1}{d}}\sum_{x_{1}\leq N}x_{1}^{1-\frac{A}{d}-\frac{1}{d}}\sum_{h_{1},\dots,h_{d-1}\leq x_{1}}h_{1}^{-1/d}\cdots h_{d-1}^{-1/d}\\
 & \ll N^{1+\epsilon+\epsilon/d}\sum_{1\le\left|n\right|\le N^{1+\epsilon}}\left|n\right|^{-\frac{1}{d}}\sum_{x_{1}\leq N}x_{1}^{1-\frac{A}{d}-\frac{1}{d}}\sum_{h_{1},\dots,h_{d-1}\leq x_{1}}h_{1}^{-1/d}\cdots h_{d-1}^{-1/d}.
\end{align*}
The innermost summation over the $h_{i}\leq x_{1}$ variables equals
\[
\biggl(\sum_{h\leq x_{1}}h^{-\frac{1}{d}}\biggr)^{d-1}\ll x_{1}^{d-2+\frac{1}{d}}.
\]
Since the sum over $n$ is $\ll N^{\left(1-\frac{1}{d}\right)\left(1+\epsilon\right)}$,
we deduce that
\begin{equation}
\sum_{\bfz\in\mathcal{S}^{2,d}}\left|I\left(\phi\left(\bfz,\cdot\right),\J\right)\right|\ll N^{2-1/d+2\epsilon}\sum_{x_{1}\leq N}x_{1}^{d-1-\frac{A}{d}}\ll N^{2-1/d-\min\left(0,\frac{A}{d}-d\right)+3\epsilon}.\label{eq:T_2_main_bound}
\end{equation}
Substituting (\ref{eq:T_2_main_bound}) back into (\ref{eq:T_2_decomp}),
we obtain
\begin{equation}
T_{2}\ll\sum_{2\leq d\leq4}N^{2-1/d-\min\left(0,\frac{A}{d}-d\right)+3\epsilon}\ll N^{4-\rho}\label{eq: bound on T2}
\end{equation}
for some $\rho>0$ as long as we have $-2-\frac{1}{d}-\frac{A}{d}+d<0\iff A>d^{2}-2d-1$
for all $2\le d\le4$ , which is equivalent to the condition $A>7$.
\uline{}\\
\uline{}\\
\uline{Bounding \mbox{$T_{3}$}:} For $1\le d\le3$, let
\begin{align*}
\mathcal{S}^{3,d} & \eqdef\left\{ \bfz\in\mathcal{S}^{3}:\,\#\left(\left\{ x_{1},x_{2},y_{2}\right\} \setminus\left\{ 1\right\} \right)=d\right\} ,
\end{align*}
so that
\[
T_{3}=\sum_{d\leq3}\sum_{\bfz\in\mathcal{S}^{3,d}}\left|I\left(\phi\left(\bfz,\cdot\right),\J\right)\right|.
\]

For ${\bf z}\in\mathcal{S}^{3,d}$, the phase function $\phi$ consists
of $d$ non-constant terms with non-vanishing coefficients. Now we
have $x_{1}=y_{1},$ and therefore the leading coefficient of $x_{1}^{\alpha}$
is $l:=n-m\ne0$. Hence, Lemma \ref{lem:MainRepulsionLemma} yields
\begin{align*}
\sum_{\bfz\in\mathcal{S}^{3,d}}\left|I\left(\phi\left(\bfz,\cdot\right),\J\right)\right| & \ll N^{\epsilon/d}\sum_{\substack{1\le\left|m\right|\le N^{1+\epsilon}\\
1\le\left|l\right|\le2N^{1+\epsilon}
}
}\left|l\right|^{-1/d}\sum_{x_{1}\leq N}x_{1}^{1-\frac{A}{d}-\frac{1}{d}}\sum_{h_{1},\dots,h_{d-1}\leq x_{1}}h_{1}^{-1/d}\cdots h_{d-1}^{-1/d}\\
 & \ll N^{2-1/d-\min\left(0,\frac{A}{d}-d\right)+3\epsilon}.
\end{align*}
Therefore,
\begin{equation}
T_{3}\ll\sum_{d\leq3}N^{2-1/d-\min\left(0,\frac{A}{d}-d\right)+3\epsilon}\ll N^{4-\rho}\label{eq: bound on T3}
\end{equation}
for some $\rho>0$ as long as $A>d^{2}-2d-1$ for all $1\le d\le3$,
which is equivalent to the condition $A>2$.\\
\\

To summarize, if $A>7$, then inserting into (\ref{eq: variance in terms of Ti})
the estimates of the $T_{i}$ from (\ref{eq: bound on T1}), (\ref{eq: bound on T2}),
and (\ref{eq: bound on T3}), we find that 
\begin{align*}
\mathrm{Var}\left(R_{2}\left(f,\cdot,N\right),\J\right) & \ll N^{-\rho}
\end{align*}
for some $\rho>0$.
\end{proof}

\section{Higher order correlations}

\subsection{Expectation and variance in terms of oscillatory integrals}

For $k\ge2$ and $\epsilon>0$, let $\calBNp=\calBNp\left(N\right)$
denote the set of integer $\left(k-1\right)$-tuples $\left(n_{1},\ldots,n_{k-1}\right)$
satisfying $\left|n_{i}\right|\leq N^{1+\epsilon}$.

Recall that we denoted by $\calBN$ the set of \emph{distinct} integer
$k$-tuples $\left(x_{1},\ldots,x_{k}\right)$ satisfying $1\le x_{i}\leq N$.
For ${\bf x}=\left(x_{1},\dots,x_{k}\right)\in\calBN$, denote
\[
\Delta\left(\bfx,\alpha\right)\eqdef\left(x_{1}^{\alpha}-x_{2}^{\alpha},x_{2}^{\alpha}-x_{3}^{\alpha},\dots,x_{k-1}^{\alpha}-x_{k}^{\alpha}\right),
\]
so that
\[
R_{k}\left(f,\alpha,N\right)=\frac{1}{N}\sum_{\bfm\in\mathbb{Z}^{k-1}}\sum_{\bfx\in\calBN}f\left(N\left(\Delta\left(\bfx,\alpha\right)-\bfm\right)\right).
\]
The following lemma generalizes Lemma \ref{lem:TruncationLemma}. 
\begin{lem}[Truncated Poisson summation]
\label{lem: truncated Poisson}Let $k\ge2,$ $f\in C_{c}^{\infty}(\mathbb{R}^{k-1})$,
and $\epsilon>0$. Then for all $t>0$ we have that
\begin{equation}
R_{k}\left(f,\alpha,N\right)=\frac{1}{N^{k}}\sum_{\bfn\in\calBNp}\sum_{\bfx\in\calBN}\widehat{f}\left(\frac{\bfn}{N}\right)e\left(\left\langle \Delta\left(\bfx,\alpha\right),\bfn\right\rangle \right)+\O(N^{-t})\label{eq: truncated Poisson form}
\end{equation}
as $N\to\infty$.
\end{lem}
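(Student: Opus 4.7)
The plan is to mimic the proof of Lemma 3.3 verbatim, just in higher dimension. First I would apply the Poisson summation formula in $k-1$ dimensions to the inner sum $\sum_{\bfm \in \mathbb{Z}^{k-1}} f\bigl(N(\Delta(\bfx,\alpha)-\bfm)\bigr)$. Writing $g(\bfy) \eqdef f(N\bfy)$, whose Fourier transform is $\widehat{g}(\bftau) = N^{-(k-1)} \widehat{f}(\bftau/N)$, the Poisson summation formula gives
\[
\sum_{\bfm \in \mathbb{Z}^{k-1}} f\bigl(N(\Delta(\bfx,\alpha)-\bfm)\bigr) = \frac{1}{N^{k-1}} \sum_{\bfn \in \mathbb{Z}^{k-1}} \widehat{f}\!\left(\frac{\bfn}{N}\right) e\bigl(\langle \Delta(\bfx,\alpha),\bfn\rangle\bigr).
\]
Substituting into the definition of $R_k(f,\alpha,N)$ and swapping the order of summation then yields the untruncated identity
\[
R_{k}(f,\alpha,N)=\frac{1}{N^{k}}\sum_{\bfn\in\mathbb{Z}^{k-1}}\widehat{f}\!\left(\frac{\bfn}{N}\right)\sum_{\bfx\in\calBN}e\bigl(\langle \Delta(\bfx,\alpha),\bfn\rangle\bigr).
\]

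Next, I would truncate the $\bfn$-sum to $\calBNp$ by controlling the tail using the rapid decay of $\widehat{f}$. Since $f \in C_c^\infty(\mathbb{R}^{k-1})$, for any $s > 0$ we have $|\widehat{f}(\boldsymbol{\xi})| \ll_s (1+\|\boldsymbol{\xi}\|_\infty)^{-s}$, which is the $k-1$ dimensional analogue of \eqref{eq: fast Fourier decay}. Combined with the trivial bound $|\sum_{\bfx \in \calBN} e(\langle \Delta(\bfx,\alpha),\bfn\rangle)| \le |\calBN| \le N^k$, the tail contribution is bounded by
\[
\frac{1}{N^{k}} \sum_{\|\bfn\|_\infty > N^{1+\epsilon}} \left|\widehat{f}\!\left(\frac{\bfn}{N}\right)\right| \cdot N^{k} \;\ll_s\; N^{s} \sum_{M > N^{1+\epsilon}} M^{k-2-s},
\]
where I have grouped $\bfn$-s by $\|\bfn\|_\infty = M$, using the standard count $\#\{\bfn \in \mathbb{Z}^{k-1}: \|\bfn\|_\infty = M\} \ll M^{k-2}$.

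Taking $s$ large enough (specifically, any $s > \max(k-1, ((k-1)(1+\epsilon)+t)/\epsilon)$), the geometric-type tail sum evaluates to $\ll (N^{1+\epsilon})^{k-1-s}$, so the entire tail is bounded by $N^{s + (1+\epsilon)(k-1-s)} = N^{(k-1)(1+\epsilon)-\epsilon s} < N^{-t}$, which produces the claimed error term. This step is essentially a direct generalization of the tail estimate \eqref{eq: tail of Fourier representation} in the pair correlation case, the only real novelty being the higher-dimensional count of lattice points on a cube and the corresponding calibration of $s$ in terms of $k$, $\epsilon$, and $t$. I do not expect any genuine obstacle here; the argument is routine and parallels Lemma 3.3 one-to-one.
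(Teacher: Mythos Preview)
Your proposal is correct and follows essentially the same approach as the paper: apply Poisson summation to obtain the untruncated identity, then discard the tail $\|\bfn\|_\infty>N^{1+\epsilon}$ using the rapid decay of $\widehat{f}$ together with the trivial bound on the $\bfx$-sum. The only cosmetic difference is that the paper uses the coordinate-wise decay estimate $\widehat{f}(\bfu)\ll\prod_i(1+|u_i|)^{-s_i}$ and singles out one large coordinate, whereas you use the max-norm decay $(1+\|\boldsymbol{\xi}\|_\infty)^{-s}$ and group lattice points by $\|\bfn\|_\infty=M$; both routes are standard and yield the same conclusion.
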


\begin{proof}
By Poisson summation,
\[
R_{k}\left(f,\alpha,N\right)=\frac{1}{N^{k}}\sum_{\bfn\in\mathbb{Z}^{k-1}}\sum_{\bfx\in\calBN}\widehat{f}\left(\frac{\bfn}{N}\right)e\left(\left\langle \Delta\left(\bfx,\alpha\right),\bfn\right\rangle \right).
\]
Bounding the summation over ${\bf x}\in\calBN$ trivially yields
\[
\sum_{\substack{\bfn\in\mathbb{Z}^{k-1}\\
\left\Vert \bfn\right\Vert _{\infty}>N^{1+\epsilon}
}
}\sum_{\bfx\in\calBN}\widehat{f}\left(\frac{\bfn}{N}\right)e\left(\left\langle \Delta\left(\bfx,\alpha\right),\bfn\right\rangle \right)\le N^{k}\sum_{\substack{\bfn\in\mathbb{Z}^{k-1}\\
\left\Vert \bfn\right\Vert _{\infty}>N^{1+\epsilon}
}
}\left|\widehat{f}\left(\frac{\bfn}{N}\right)\right|.
\]
By the rapid decay of $\widehat{f}$, for ${\bf u}=\left(u_{1},\dots,u_{k-1}\right)\in\mathbb{R}^{k-1}$
we have
\begin{equation}
\widehat{f}\left(\mathbf{u}\right)=\O\left(\frac{1}{\left(1+\left|u_{1}\right|\right)^{s_{1}}\cdots\left(1+\left|u_{k-1}\right|\right)^{s_{k-1}}}\right)\label{eq: fast decay of multivariate Fourier transform}
\end{equation}
for any $s_{1},\dots,s_{k-1}>0$. In particular, for any $s>0$ we
have
\begin{align}
\sum_{\substack{\bfn\in\mathbb{Z}^{k-1}\\
\left\Vert \bfn\right\Vert _{\infty}>N^{1+\epsilon}
}
}\left|\widehat{f}\left(\frac{\bfn}{N}\right)\right| & \ll N^{s+2\left(k-2\right)}\sum_{\substack{\bfn\in\mathbb{Z}^{k-1}\\
n_{1}>N^{1+\epsilon}
}
}n_{1}^{-s}(1+\left|n_{2}\right|)^{-2}\cdots(1+\left|n_{k-1}\right|)^{-2}\label{eq:higher_corr_tail}\\
 & \ll N^{2k-4+s}\sum_{n>N^{1+\epsilon}}n^{-s}\ll N^{2k-4+s-\left(1+\epsilon\right)\left(s-1\right)}.\nonumber 
\end{align}
Taking $s$ suitably large so that
\[
2k-4+s-\left(1+\epsilon\right)\left(s-1\right)=2k-3+\epsilon-\epsilon s<-t,
\]
we get that the right-hand-side of (\ref{eq:higher_corr_tail}) is
$<N^{-t}$ which gives our claim.
\end{proof}
Given ${\bf n}\in\mathbb{Z}^{k-1}$, we define the vector ${\bf u}\left({\bf n}\right)=\left(u_{1}\left({\bf n}\right),\dots,u_{k}\left({\bf n}\right)\right)\in\mathbb{Z}^{k}$
by the rule
\[
u_{i}\left(\bfn\right)\eqdef\begin{cases}
n_{1}, & \mathrm{if}\,i=1,\\
n_{i}-n_{i-1}, & \mathrm{if}\,2\leq i\leq k-1\\
-n_{k-1}, & \mathrm{if}\,i=k.
\end{cases},
\]
Note that the linear map ${\bf n}\mapsto{\bf u}\left({\bf n}\right)$
is injective. Moreover, it satisfies the bound
\begin{equation}
\left\Vert {\bf u}\left({\bf n}\right)\right\Vert _{\infty}\le2\left\Vert {\bf n}\right\Vert _{\infty}\label{eq:u_n_range}
\end{equation}
and the relation
\begin{equation}
\sum_{i=1}^{k}u_{i}\left(\bfn\right)=0.\label{eq:u_n_relation}
\end{equation}

Let 
\[
\mathcal{U}_{k}^{\epsilon}=\mathcal{U}_{k}^{\epsilon}\left(N\right)=\left\{ {\bf u}=\left(u_{1},\dots,u_{k}\right)\in\mathbb{Z}^{k}:\,1\le\left\Vert \bfu\right\Vert _{\infty}\le2N^{1+\epsilon},\,u_{1}+\dots+u_{k}=0\right\} ,
\]
and note that the relations (\ref{eq:u_n_range}), (\ref{eq:u_n_relation})
imply that ${\bf u}\left({\bf n}\right)\in\mathcal{U}_{k}^{\epsilon}$
whenever $\mathbf{0}_{k-1}\ne{\bf n}\in\calBNp$.

\subsection{Degenerate regimes}

Let $K>0$ (we will take below either $K=k$ or $K=2k$), let $\bfu=\left(u_{1},\ldots,u_{K}\right)\in\mathbb{Z}^{K}$,
$\bfx=\left(x_{1},\ldots,x_{K}\right)\in\mathbb{Z}_{>0}^{K}$, and
let $\phi\left(\alpha\right)$ be a function of the form

\begin{equation}
\phi\left(\alpha\right)=\phi\left({\bf z},\alpha\right)=\sum_{i\leq K}u_{i}x_{i}^{\alpha},\quad{\bf z=\left({\bf u},{\bf x}\right)}.\label{eq:phi_def_K}
\end{equation}
For utilizing the repulsion principle, we require the derivative of
the phase function $\phi$ to genuinely depend on all the $x_{i}$
variables. While this is true throughout most of the regime, there
are certain constellations of the parameters where this basic property
fails. To illustrate this phenomenon, let us consider the oscillatory
integrals that we have already encountered when analysing the variance
of the pair correlation sum
\begin{equation}
\int_{\J}e(n(x_{1}^{\alpha}-x_{2}^{\alpha})-m(y_{1}^{\alpha}-y_{2}^{\alpha}))\,\mathrm{d}\alpha,\quad\begin{array}{c}
1\le x_{2}<x_{1}\le N,\\
1\le y_{2}<y_{1}\leq N,
\end{array}\quad1\leq\left|n\right|,\left|m\right|\leq N^{1+\epsilon}.\label{eq: oscillating integrals for the variance}
\end{equation}

Already here (different kinds of) degeneracy issues arose, yet the
combinatorics was still simple. Note that this integral can degenerate
in essentially three different ways:
\begin{enumerate}
\item Some of the variables $x_{i},y_{i}$ can be equal to $1$, e.g., $x_{2}=1$
(in fact there can be at most two such variables).
\item Some of the variables $x_{i},y_{i}$ could be identical, e.g., we
may have that $x_{1}=y_{1}$; in fact, there can be at most two pairs
of identical variables in (\ref{eq: oscillating integrals for the variance}).
\item The variables $n,m$ can be chosen in such a manner that the coefficients
in front of some terms vanish. For instance, we may have $n=m$ and
$x_{1}=y_{1}$. Moreover, a particular scenario is that the variables
are arranged in such a way that the phase function $\phi$ vanishes
identically\footnote{It turns out $\phi$ can vanish identically only in the kind of integrals
like (\ref{eq: oscillating integrals for the variance}) appearing
in the variance bounds, but not in the kind of integrals involved
in the expectation.}, when
\[
x_{1}=y_{1},\,x_{2}=y_{2},\,n=m.
\]
Fortunately, this is the only configuration for this scenario to happen,
and there are only $\O\left(N^{3+\epsilon}\right)$ such parameters.
Since the variance estimate is equipped with a normalization factor
of $N^{-4}$ the contribution from this regime is negligible.
\end{enumerate}
Each of these possible degeneracies will also occur when dealing with
the expectation and the variance of higher correlation sums, and will
need to be accounted for. 

Given $\phi$ of the form (\ref{eq:phi_def_K}), we define a measurement
of how many variables $x_{i}$ genuinely occur in the derivative $\phi'\left(\alpha\right)$.
We can clearly write
\begin{equation}
\phi'\left(\alpha\right)=\sum_{i\leq d}w_{i}z_{i}^{\alpha}\label{eq:non-deg-rep}
\end{equation}
where $0\le d\le K$, $\left\{ z_{1},\ldots,z_{d}\right\} \subseteq\left\{ x_{1},\dots,x_{K}\right\} $,
$z_{1},\dots,z_{d}\ge2$ are distinct, and $w_{1},\dots,w_{d}\ne0$.
Moreover, by the independence of the functions $\alpha\mapsto z_{i}^{\alpha}$,
the representation (\ref{eq:non-deg-rep}) is unique, and we say that
$\phi$ is $(K-d)$-degenerate. Instead of $0$-degenerate (corresponding
to $d=K$) we say that $\phi$ is non-degenerate. 

Let $\E_{k,d}^{\epsilon}=\E_{k,d}^{\epsilon}\left(N\right)$ (resp.
$\V_{k,d}^{\epsilon}=\V_{k,d}^{\epsilon}\left(N\right)$) denote the
set of all $\bfz=\left({\bf u},{\bf x}\right)\in\mathcal{U}_{k}^{\epsilon}\times\calBN$
(resp. $\bfz=\left({\bf u},{\bf v},{\bf x},{\bf y}\right)\in\left(\mathcal{U}_{k}^{\epsilon}\right)^{2}\times\calBN^{2}$)
such that $\phi\left(\bfz,\alpha\right)$ is $\left(k-d\right)$-degenerate
(resp. $\left(2k-d\right)$-degenerate). Our main goal will be to
bound the sums
\[
S(\E_{k,d}^{\epsilon},\J)\eqdef\sum_{\bfz\in\E_{k,d}^{\epsilon}}\left|I\left(\phi\left(\bfz,\cdot\right),\J\right)\right|,\qquad S(\V_{k,d}^{\epsilon},\J)\eqdef\sum_{\bfz\in\mathcal{V}_{k,d}^{\epsilon}}\left|I\left(\phi\left(\bfz,\cdot\right),\J\right)\right|.
\]
 As we shall show, these quantities control the expectation and variance
of $R_{k}$.
\begin{lem}
\label{lem: expectation for higher corr via oscil inte}Let $k\ge2,$
$f\in C_{c}^{\infty}(\mathbb{R}^{k-1})$, $\epsilon>0$, $C_{k}\left(N\right)$
be as in (\ref{eq:C_k(N)}), and 

\[
E_{k}\left(N,\J,\epsilon\right)\eqdef\frac{1}{N^{k}}\sum_{0<d\leq k}S(\E_{k,d}^{\epsilon},\J).
\]
Then for all $t>0$, as $N\to\infty$, we have that
\begin{equation}
\int_{\mathcal{J}}R_{k}\left(f,\alpha,N\right)\,\mathrm{d}\alpha-C_{k}\left(N\right)\int_{\mathbb{R}^{k-1}}f\left(\bfx\right)\,\mathrm{d}\bfx\ll E_{k}\left(N,\J,\epsilon\right)+N^{-t}.\label{eq: rep of expectation remainder}
\end{equation}
\end{lem}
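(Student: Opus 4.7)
The strategy is to apply Lemma \ref{lem: truncated Poisson}, integrate term-by-term against $\mathrm{d}\alpha$ on $\J$, split the resulting finite sum according to whether $\bfn = \mathbf{0}_{k-1}$ or not, identify the $\bfn = \mathbf{0}_{k-1}$ contribution as the main term $C_k(N) \int f$, and bound everything else by the quantity $E_k(N,\J,\epsilon)$.

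First, using Lemma \ref{lem: truncated Poisson} together with the identity
\[
\left\langle \Delta(\bfx,\alpha),\bfn\right\rangle = \sum_{i=1}^{k-1} n_i(x_i^\alpha - x_{i+1}^\alpha) = \sum_{i=1}^{k} u_i(\bfn)\, x_i^\alpha = \phi(\bfz,\alpha),\qquad \bfz = (\bfu(\bfn),\bfx),
\]
and swapping the finite sum with integration over $\alpha \in \J$, I would write
\[
\int_{\J} R_k(f,\alpha,N)\,\mathrm{d}\alpha = \frac{1}{N^{k}} \sum_{\bfn \in \calBNp} \widehat{f}\!\left(\tfrac{\bfn}{N}\right) \sum_{\bfx \in \calBN} \int_{\J} e(\phi(\bfz,\alpha))\,\mathrm{d}\alpha + O(N^{-t}).
\]
For $\bfn = \mathbf{0}_{k-1}$ we have $\bfu(\mathbf{0}_{k-1}) = \mathbf{0}_k$, so $\phi \equiv 0$ and the inner integral equals $|\J| = 1$. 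Combined with $\widehat{f}(\mathbf{0}) = \int_{\mathbb{R}^{k-1}} f(\bfx)\,\mathrm{d}\bfx$ and the counting identity $|\calBN| = N(N-1)\cdots(N-k+1) = N^{k} C_k(N)$, this term produces exactly the desired main term $C_k(N) \int_{\mathbb{R}^{k-1}} f(\bfx)\,\mathrm{d}\bfx$.

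For $\bfn \ne \mathbf{0}_{k-1}$, the map $\bfn \mapsto \bfu(\bfn)$ is injective with image contained in $\mathcal{U}_k^\epsilon$ (by (\ref{eq:u_n_range}) and (\ref{eq:u_n_relation})), so each pair $(\bfu(\bfn), \bfx)$ is an element of $\mathcal{U}_k^\epsilon \times \calBN$. To finish, I need to argue that every such pair lies in $\bigsqcup_{d=1}^{k} \E_{k,d}^\epsilon$, i.e., that $\phi(\bfz,\cdot)$ is \emph{never} fully degenerate. This is the one non-routine step: since $\bfx \in \calBN$ has pairwise distinct entries, the functions $\{\alpha \mapsto x_i^\alpha : x_i \ge 2\}$ are linearly independent, so $\phi'(\alpha) = \sum_i u_i(\bfn)(\log x_i) x_i^\alpha \equiv 0$ forces $u_i(\bfn) = 0$ for every index with $x_i \ge 2$; the relation $\sum_{i=1}^{k} u_i(\bfn) = 0$ then forces the (at most one) remaining $u_j(\bfn)$ with $x_j = 1$ to vanish as well, contradicting $\bfn \ne \mathbf{0}_{k-1}$.

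Combining this with the trivial bound $|\widehat{f}(\bfn/N)| \ll 1$ (using $f \in L^1(\mathbb{R}^{k-1})$) and partitioning the $\bfn \ne \mathbf{0}_{k-1}$ contribution by the degeneracy parameter $d$, I would obtain
\[
\int_{\J} R_k(f,\alpha,N)\,\mathrm{d}\alpha - C_k(N) \int_{\mathbb{R}^{k-1}} f(\bfx)\,\mathrm{d}\bfx \ll \frac{1}{N^{k}} \sum_{0 < d \le k} S(\E_{k,d}^\epsilon, \J) + N^{-t} = E_k(N,\J,\epsilon) + O(N^{-t}),
\]
which is exactly (\ref{eq: rep of expectation remainder}). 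The only conceptual point is the structural observation in the previous paragraph that non-zero $\bfn$ cannot conspire with $\bfx \in \calBN$ to produce a constant phase; the rest is bookkeeping with truncated Poisson summation.
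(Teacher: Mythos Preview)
Your proposal is correct and follows essentially the same approach as the paper: truncated Poisson summation, separating the $\bfn=\mathbf{0}_{k-1}$ term as the main term via $\#\calBN=N^{k}C_{k}(N)$ and $\widehat{f}(\mathbf{0})=\int f$, then bounding the remaining terms by partitioning according to the degeneracy index $d$ and using the injectivity of $\bfn\mapsto\bfu(\bfn)$. Your explicit justification that $d=0$ cannot occur (via linear independence of the $x_{i}^{\alpha}$ together with the relation $\sum_{i}u_{i}(\bfn)=0$) spells out a point the paper merely asserts in passing.
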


\begin{proof}
By Lemma \ref{lem: truncated Poisson},
\[
\int_{\mathcal{J}}R_{k}\left(f,\alpha,N\right)\,\text{d}\alpha=\frac{1}{N^{k}}\sum_{\substack{\bfx\in\calBN\\
\bfn\in\calBNp
}
}\widehat{f}\left(\frac{\bfn}{N}\right)\int_{\mathcal{J}}e\left(\left\langle \Delta\left(\bfx,\alpha\right),\bfn\right\rangle \right)\,\text{d}\alpha+\O(N^{-t}).
\]
We observe that for $\bfn=\mathbf{0}_{k-1}$, the number of corresponding
$\bfx\in\calBN$ to choose from is
\[
\#\calBN=N\cdot\left(N-1\right)\ldots\left(N-k+1\right).
\]
Hence 
\begin{align}
\int_{\mathcal{J}}R_{k}\left(f,\alpha,N\right)\,\text{d}\alpha & -C_{k}\left(N\right)\int_{\mathbb{R}^{k-1}}f\left(\bfx\right)\,\text{d}\bfx\nonumber \\
 & =\frac{1}{N^{k}}\sum_{\substack{\bfx\in\calBN\\
{\bf 0}_{k-1}\ne\bfn\in\calBNp
}
}\widehat{f}\left(\frac{\bfn}{N}\right)\int_{\mathcal{J}}e\left(\left\langle \Delta\left(\bfx,\alpha\right),\bfn\right\rangle \right)\,\text{d}\alpha+\O(N^{-t}).
\end{align}
Clearly,
\begin{align*}
\left\langle \Delta\left(\bfx,\alpha\right),\bfn\right\rangle  & =\sum_{1\le i\leq k-1}n_{i}\left(x_{i}^{\alpha}-x_{i+1}^{\alpha}\right)=n_{1}x_{1}^{\alpha}-n_{k-1}x_{k}^{\alpha}+\sum_{2\leq i\leq k-1}\left(n_{i}-n_{i-1}\right)x_{i}^{\alpha}\\
 & =\phi\left({\bf u}\left(\bfn\right),\bfx,\alpha\right).
\end{align*}
Thus, 
\[
\int_{\mathcal{J}}e\left(\left\langle \Delta\left(\bfx,\alpha\right),\bfn\right\rangle \right)\,\text{d}\alpha=I\left(\phi\left({\bf u}\left(\bfn\right),\bfx,\cdot\right),\J\right).
\]
Summing over the different $\left(k-d\right)$-degenerate regimes
(and noting that $k$-degeneracy, corresponding to $d=0$, cannot
occur for $\bfn\ne\mathbf{0}_{k-1}$) implies
\[
\sum_{\substack{\bfx\in\calBN\\
{\bf 0}_{k-1}\ne\bfn\in\calBNp
}
}\widehat{f}\left(\frac{\bfn}{N}\right)\int_{\mathcal{J}}e\left(\left\langle \Delta\left(\bfx,\alpha\right),\bfn\right\rangle \right)\,\text{d}\alpha\ll\sum_{0<d\leq k}\sum_{\substack{\bfx\in\calBN\\
{\bf 0}_{k-1}\ne\bfn\in\calBNp\\
\bfz=\left(\bfu\left({\bf n}\right),{\bf x}\right)\in\E_{k,d}^{\epsilon}
}
}\left|I\left(\phi\left(\bfz,\cdot\right),\J\right)\right|.
\]
Finally, by the injectivity of the map ${\bf n}\mapsto{\bf u}\left({\bf n}\right)$
we have
\[
\sum_{\substack{\bfx\in\calBN\\
{\bf 0}_{k-1}\ne\bfn\in\calBNp\\
\bfz=\left(\bfu\left({\bf n}\right),{\bf x}\right)\in\E_{k,d}^{\epsilon}
}
}\left|I\left(\phi\left(\bfz,\cdot\right),\J\right)\right|\ll S(\E_{k,d}^{\epsilon},\J)
\]
which implies (\ref{eq: rep of expectation remainder}).
\end{proof}
The derivation of a bound for the variance of $R_{k}$ in terms of
oscillatory integrals is similar:
\begin{lem}
\label{lem: variance for higher corr via oscil inte} Let $k\ge2,$
$f\in C_{c}^{\infty}(\mathbb{R}^{k-1})$, and $\epsilon>0$. Then,
for all $t>0$, we have that
\begin{align}
\mathrm{Var}\left(R_{k}\left(f,\cdot,N\right),\J\right) & \ll V_{k}\left(N,\J,\epsilon\right)+N^{-t}\label{eq: rep of variance remainder}
\end{align}
as $N\to\infty$, where the term $V_{k}\left(f,N,\J\right)$ is the
given by the sum
\[
V_{k}\left(N,\J,\epsilon\right)\eqdef\frac{1}{N^{2k}}\sum_{0\leq d\leq2k}S(\V_{k,d}^{\epsilon},\J).
\]
\end{lem}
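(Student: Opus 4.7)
The plan is to parallel the proof of Lemma~\ref{lem: expectation for higher corr via oscil inte}, but with the truncated Fourier expansion squared rather than integrated directly. As a first step, Lemma~\ref{lem: truncated Poisson} together with the observation that the $\bfn = \mathbf{0}_{k-1}$ contribution equals $(\#\calBN/N^{k})\widehat{f}(0) = C_{k}(N)\int_{\mathbb{R}^{k-1}} f(\bfx)\,\mathrm{d}\bfx$ will yield, uniformly in $\alpha \in \J$,
\[
R_{k}(f,\alpha,N) - C_{k}(N)\int_{\mathbb{R}^{k-1}} f(\bfx)\,\mathrm{d}\bfx = T(\alpha) + \O(N^{-t}),
\]
with
\[
T(\alpha) \eqdef \frac{1}{N^{k}} \sum_{\mathbf{0}_{k-1}\ne \bfn \in \calBNp}\;\sum_{\bfx \in \calBN} \widehat{f}\Bigl(\frac{\bfn}{N}\Bigr)\, e\bigl(\langle \Delta(\bfx,\alpha), \bfn\rangle\bigr).
\]
Applying the elementary inequality $(a+b)^{2} \le 2a^{2} + 2b^{2}$ and integrating over $\J$ will reduce the variance to $2\int_{\J} T(\alpha)^{2}\,\mathrm{d}\alpha$, the remaining $\O(N^{-2t})$ term being absorbed by enlarging $t$.

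Next, since $T$ is real-valued, I would expand $T(\alpha)^{2} = T(\alpha)\overline{T(\alpha)}$ and perform the same Abel-type manipulation as in the proof of Lemma~\ref{lem: expectation for higher corr via oscil inte} to rewrite the difference of phases
\[
\langle \Delta(\bfx,\alpha), \bfn\rangle - \langle \Delta(\bfy,\alpha), \bfm\rangle = \sum_{i=1}^{k} u_{i}(\bfn)\,x_{i}^{\alpha} + \sum_{j=1}^{k} (-u_{j}(\bfm))\,y_{j}^{\alpha} = \phi(\bfz, \alpha),
\]
where $\bfz = (\bfu(\bfn), -\bfu(\bfm), \bfx, \bfy)$ and $\phi(\bfz,\cdot)$ has the form (\ref{eq:phi_def_K}) with $K = 2k$. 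The key structural observation will be that $\mathcal{U}_{k}^{\epsilon}$ is closed under negation (both the bound $\left\Vert -\bfu(\bfm)\right\Vert _{\infty} \le 2\left\Vert \bfm\right\Vert _{\infty}$ and the constraint $\sum_{i}(-u_{i}(\bfm))=0$ are preserved), so $\bfu(\bfn), -\bfu(\bfm) \in \mathcal{U}_{k}^{\epsilon}$ and consequently $\bfz \in (\mathcal{U}_{k}^{\epsilon})^{2} \times \calBN^{2}$; moreover, the map $(\bfn,\bfm) \mapsto (\bfu(\bfn),-\bfu(\bfm))$ is injective.

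To conclude, I would invoke $|\widehat{f}(\bfn/N)\,\overline{\widehat{f}(\bfm/N)}| \ll 1$, apply the triangle inequality under the integral, and partition the parameter set $(\mathcal{U}_{k}^{\epsilon})^{2} \times \calBN^{2}$ according to the $(2k-d)$-degeneracy of $\phi(\bfz,\cdot)$, arriving at
\[
\int_{\J} T(\alpha)^{2}\,\mathrm{d}\alpha \ll \frac{1}{N^{2k}}\sum_{0 \le d \le 2k} S(\V_{k,d}^{\epsilon},\J) = V_{k}(N,\J,\epsilon),
\]
which is exactly the claimed bound (\ref{eq: rep of variance remainder}). No step presents a genuine obstacle: the argument is a formal analogue of Lemma~\ref{lem: expectation for higher corr via oscil inte}, where squaring $T$ merely doubles the number of variables from $k$ to $2k$ and creates the possibility of full cancellation (the case $d=0$) that was absent in the expectation analysis. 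The substantive work---estimating each individual $S(\V_{k,d}^{\epsilon},\J)$ via the repulsion machinery of Lemma~\ref{lem:MainRepulsionLemma}---is deferred to the subsequent subsections.
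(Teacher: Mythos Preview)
Your proposal is correct and follows essentially the same approach as the paper: expand the square of the truncated Fourier sum from Lemma~\ref{lem: truncated Poisson}, absorb the error term, rewrite the combined phase as $\phi(\bfz,\cdot)$ with $\bfz\in(\mathcal{U}_{k}^{\epsilon})^{2}\times\calBN^{2}$, and partition by degeneracy. The only cosmetic difference is that the paper expands $T\cdot T$ directly (yielding the phase $\langle\Delta(\bfx,\alpha),\bfn\rangle+\langle\Delta(\bfy,\alpha),\bfm\rangle$ and the pair $(\bfu(\bfn),\bfu(\bfm))$), whereas you write $T\cdot\overline{T}$ and obtain $(\bfu(\bfn),-\bfu(\bfm))$; since $\mathcal{U}_{k}^{\epsilon}$ is symmetric under negation this makes no difference.
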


\begin{proof}
By Lemma \ref{lem: truncated Poisson}, for all $s>0$ we have
\[
\mathrm{Var}\left(R_{k}\left(f,\cdot,N\right),\J\right)=\int_{\J}\Bigl(N^{-k}\sum_{\substack{\bfx\in\calBN\\
{\bf 0}_{k-1}\ne\bfn\in\calBNp
}
}\widehat{f}\left(\frac{\bfn}{N}\right)e\left(\left\langle \Delta\left(\bfx,\alpha\right),\bfn\right\rangle \right)+\O(N^{-s})\Bigr)^{2}\:\mathrm{d}\alpha.
\]
Expanding the square and taking $s$ sufficiently large, we get that
for all $t>0$
\begin{align*}
\mathrm{Var}\left(R_{k}\left(f,\cdot,N\right),\J\right) & =N^{-2k}\sum_{\substack{\bfx,{\bf y}\in\calBN\\
{\bf 0}_{k-1}\ne\bfn,{\bf m}\in\calBNp
}
}\widehat{f}\left(\frac{\bfn}{N}\right)\widehat{f}\left(\frac{{\bf m}}{N}\right)\\
 & \times\int_{\J}e\left(\left\langle \Delta\left(\bfx,\alpha\right),\bfn\right\rangle +\left\langle \Delta\left({\bf y},\alpha\right),{\bf m}\right\rangle \right)\:\mathrm{d}\alpha+O(N^{-t}).
\end{align*}
Repeating the arguments of Lemma \ref{lem: expectation for higher corr via oscil inte}
yields the claim. 
\end{proof}

\subsection{Proof of Theorem \ref{thm: variance bound} -- the general case}

As outlined above, we wish to relate the sums $S(\E_{k,d}^{\epsilon},\J)$
and $S(\mathcal{V}_{k,d}^{\epsilon},\J)$ to quantities that only
involve those $\bfz$ for which $\phi\left(\bfz,\alpha\right)$ is
non-degenerate.

Fix $d,j\ge1$, and for each $1\le i\le d$, let $L_{i}:\mathbb{Z}^{j}\to\mathbb{Z}$
denote a nonzero linear map. Let $\mathbf{L}=\left(L_{1},\dots,L_{d}\right)$,
and consider the sums
\begin{equation}
Q\left(d,j,\mathcal{\mathbf{L}},N,\J,\epsilon\right)\eqdef\sum_{\substack{\bfm\in\mathbb{Z}^{j}\\
1\le\left\Vert \bfm\right\Vert _{\infty}\le2N^{1+\epsilon}\\
L_{i}\left({\bf m}\right)\ne0\,\left(1\le i\le d\right)
}
}\,\,\sum_{\substack{\bft=\left(t_{1},\dots,t_{d}\right)\in\mathbb{Z}^{d}\\
2\le t_{i}\le N\,\text{distinct}
}
}\left|I\left(\phi\left(\mathcal{\mathbf{L}}\left({\bf m}\right),\bft,\cdot\right),\J\right)\right|.\label{eq: Q1}
\end{equation}
Here $\phi$ is as in (\ref{eq:phi_def_K}) with $K=d$.

Surely bounding $S(\E_{k,d}^{\epsilon},\J),\,S(\mathcal{V}_{k,d}^{\epsilon},\J)$
in terms of $Q$ requires weight factors accounting for the number
of variables that $\left|I\left(\phi\left(\bfz,\cdot\right),\J\right)\right|$
does not effectively depend upon --- see the proofs of Proposition
\ref{prop: reduction expectation} and Proposition \ref{prop: variance reduction}
below. But first, we deduce an appropriate bound for $Q$.
\begin{lem}
\label{lem:Q_Bound}Fix $d,j\geq1$. For $1\leq i\leq d$, let $L_{i}:\mathbb{Z}^{j}\to\mathbb{Z}$
denote a nonzero linear map. Further, let $\mathbf{L}=\left(L_{1},\dots,L_{d}\right)$.
For every $\epsilon>0$, we have the bound 
\begin{equation}
Q\left(d,j,\mathcal{\mathbf{L}},N,\J,\epsilon\right)=\O\bigl(N^{j-\frac{1}{d}-\min\left(\frac{A}{d}-d,0\right)+\epsilon}\bigr)\label{eq: bound for cut-off}
\end{equation}
as $N\to\infty$.
\end{lem}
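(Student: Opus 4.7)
The plan is to apply the repulsion principle of Lemma \ref{lem:MainRepulsionLemma} pointwise to each oscillatory integral and then carefully sum the resulting bound over both the $\bft$- and $\bfm$-variables. By the symmetry of $Q$ under permutation of the components of $\bft$ (with the roles of the $L_i$ permuted accordingly), it suffices to bound the partial sum restricted to $2 \le t_1 < t_2 < \dots < t_d \le N$ up to a multiplicative factor of $d!$. For such sorted $\bft$, writing $h_m = t_{m+1} - t_m$ and noting that $L_d(\bfm) \ne 0$ by hypothesis, Lemma \ref{lem:MainRepulsionLemma} gives
\[
\bigl|I(\phi(\mathbf{L}(\bfm), \bft, \cdot), \J)\bigr| \ll N^{\epsilon/d} \, |L_d(\bfm)|^{-1/d} \, t_d^{-(A+1-d)/d} \prod_{m=1}^{d-1} h_m^{-1/d}.
\]

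Next I would carry out the $\bft$-summation. Freezing $T = t_d$ and letting the gaps $h_m$ range freely in $[1, T-1]$, the elementary estimate $\sum_{h=1}^{T} h^{-1/d} \ll T^{1-1/d}$ (valid for $d\geq 2$) yields $\prod_{m=1}^{d-1} \sum_{h_m} h_m^{-1/d} \ll T^{(d-1)(1-1/d)} = T^{d-2+1/d}$. The remaining single sum over $T \le N$ then becomes
\[
\sum_{T \le N} T^{d-1-A/d} \ll N^{-\min(A/d - d,\, 0) + \epsilon},
\]
where the $N^\epsilon$ absorbs any logarithm that appears at the boundary $A = d^2$. Finally I would estimate the $\bfm$-sum: since $L_d$ is a nonzero linear form on $\mathbb{Z}^j$, its fibres inside the box $\|\bfm\|_\infty \le 2N^{1+\epsilon}$ have cardinality $\O(N^{(1+\epsilon)(j-1)})$, and its image in that box lies in an interval of length $\O(N^{1+\epsilon})$. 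Combining these with $\sum_{1\le|v| \le V} |v|^{-1/d} \ll V^{1-1/d}$ gives
\[
\sum_{\substack{\|\bfm\|_\infty \le 2N^{1+\epsilon} \\ L_d(\bfm) \ne 0}} |L_d(\bfm)|^{-1/d} \ll N^{j - 1/d + \O(\epsilon)},
\]
and the additional constraints $L_i(\bfm) \ne 0$ for $i<d$ can only decrease this sum.

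Multiplying the three contributions produces the bound $N^{j - 1/d - \min(A/d - d,\,0) + \O(\epsilon)}$, so rescaling $\epsilon$ at the outset yields the claimed estimate. The only subtle point --- arguably the sole genuine obstacle --- is checking that under \emph{every} reordering of $\bft$ the ``leading coefficient'' of the sorted phase function (that is, the coefficient of the largest $t_i^\alpha$) remains a nonzero linear functional of $\bfm$, so that the $\bfm$-sum genuinely saves the crucial $N^{1/d}$ factor rather than wasting it. This is automatic from the hypothesis that each $L_i$ is nonzero, which ensures uniformity of the bound across all $d!$ orderings and allows us to collapse the analysis to the single ordered case described above.
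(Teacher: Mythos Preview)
Your proposal is correct and follows essentially the same route as the paper: reduce to ordered $\bft$, apply Lemma~\ref{lem:MainRepulsionLemma} pointwise, then sum over the top coordinate $t_d$ and the gaps $h_m$, and finally over $\bfm$ using that $L_d$ is a nonzero linear form. The paper phrases the $\bfm$-sum as a change of variables (solve for one coordinate of $\bfm$ in terms of $l=L_d(\bfm)$ and the remaining $j-1$ coordinates), while you phrase it as a fibre count, but these are equivalent; your closing remark about the leading coefficient under reorderings is exactly the point that justifies the paper's ``assume without loss of generality that $t_1<\dots<t_d$''.
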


\begin{proof}
Let $\bfm,\bft$ be arbitrary elements in the summation of $Q$, and
assume without loss of generality that $t_{1}<t_{2}<\dots<t_{d}$.
By Lemma \ref{lem:MainRepulsionLemma},
\[
\left|I\left(\phi\left(\mathcal{\mathbf{L}}\left({\bf m}\right),\bft,\cdot\right),\J\right)\right|\ll_{d,\epsilon}\left|L_{d}\left(\bfm\right)\right|^{-\frac{1}{d}}t_{d}^{-\frac{A}{d}+1-\frac{1}{d}}\left(h_{1}\ldots h_{d-1}\right)^{-\frac{1}{d}}N^{\epsilon/d}.
\]
where $h_{i}=t_{i+1}-t_{i}$ for $i=1,\dots,d-1$.

Since $L_{d}$ is not the zero map, we can express one of the variables
comprising ${\bf m}$ in terms of the other $j-1$ variables and $l=L_{d}\left(\bfm\right)$.
Thus, the bound
\begin{align*}
Q\left(d,j,\mathcal{\mathbf{L}},N,\J,\epsilon\right) & \ll N^{\left(j-1\right)\left(1+\epsilon\right)+\epsilon/d}\sum_{1\leq\left|l\right|\ll_{{\bf L}}N^{1+\epsilon}}\left|l\right|^{-\frac{1}{d}}\sum_{t\leq N}t^{-\frac{A}{d}+1-\frac{1}{d}}\sum_{\substack{h_{i}\le t\\
i\le d-1
}
}\left(h_{1}\ldots h_{d-1}\right)^{-\frac{1}{d}}\\
 & \ll N^{j-\frac{1}{d}-\min\left(\frac{A}{d}-d,0\right)+\left(j+1\right)\epsilon}
\end{align*}
produces the required estimate.
\end{proof}
\begin{prop}
\label{prop: reduction expectation}Let $k\geq2$, and $\epsilon>0$.
If $0<d\leq k$, then 
\begin{equation}
S(\E_{k,d}^{\epsilon},\J)=\O(N^{k-1-\frac{1}{d}-\min\left(\frac{A}{d}-d,0\right)+\epsilon})\label{eq: expectation variable reduction}
\end{equation}
as $N\to\infty$. 
\end{prop}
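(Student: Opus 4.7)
My plan is to reduce the sum $S(\E_{k,d}^{\epsilon}, \J)$ to one of the auxiliary quantities $Q(d, j, \mathbf{L}, N, \J, \epsilon)$ from Lemma~\ref{lem:Q_Bound}, by exploiting the $(k-d)$-degeneracy to collapse the phase function onto its genuinely $d$-variable part. For a tuple $\bfz = (\bfu, \bfx) \in \E_{k,d}^{\epsilon}$, I would first stratify $\{1, \dots, k\}$ by the value of $x_i$: let $T_0 \eqdef \{i : x_i = 1\}$ and partition the remaining indices into groups $T_1, \dots, T_r$ of constant $x_i$-value, labelled by distinct integers $2 \le y_1, \dots, y_r \le N$. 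The $(k-d)$-degeneracy forces exactly $d$ of the group sums $w_j \eqdef \sum_{i \in T_j} u_i$ to be nonzero, so after relabelling we may assume $w_1, \dots, w_d \neq 0$ and $w_{d+1} = \dots = w_r = 0$.

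The first key simplification is to observe that
$$\phi(\bfz, \alpha) = \sum_{i \in T_0} u_i + \sum_{j=1}^{d} w_j y_j^{\alpha},$$
and that the constraint $\sum_{i \le k} u_i = 0$ built into $\mathcal{U}_k^{\epsilon}$, together with the zero-sum conditions on $T_{d+1}, \dots, T_r$, forces $\sum_{i \in T_0} u_i = -\sum_{j=1}^{d} w_j \in \mathbb{Z}$. Because the constant term is an integer, it disappears under the exponential, giving $I(\phi(\bfz, \cdot), \J) = I(\tilde{\phi}, \J)$ with $\tilde{\phi}(\alpha) \eqdef \sum_{j=1}^{d} w_j y_j^{\alpha}$. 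Thus the integral depends on $\bfz$ only through the $2d$ parameters $(w_1, \dots, w_d, y_1, \dots, y_d)$.

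Next I would sum over the $O_k(1)$ combinatorial types of partition structure. For each fixed type and each fixed $(\bfw, y_1, \dots, y_d)$, I would count the $\bfu$'s compatible with the group sums and the global sum constraint: within each group $T_j$ ($j \ge 1$) one coordinate is determined by $w_j$ (or by the zero-sum condition), leaving $|T_j| - 1$ free; if $T_0 \neq \emptyset$, a further coordinate in $T_0$ is then pinned by the global constraint. This gives a count of $O(N^{(k - r - \mathbf{1}_{T_0 \neq \emptyset})(1+\epsilon)})$. The residual sum over $(\bfw, y_1, \dots, y_d)$ fits the definition of $Q$: when $T_0 \neq \emptyset$, $\bfw$ is free, so one takes $\bfm = \bfw$, $j = d$, and $\mathbf{L} = \mathrm{id}$; when $T_0 = \emptyset$, the global constraint becomes $\sum_j w_j = 0$, which one parametrises by $\bfm = (w_1, \dots, w_{d-1}) \in \mathbb{Z}^{d-1}$ with $L_d(\bfm) = -\sum_{i < d} m_i$. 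Plugging into Lemma~\ref{lem:Q_Bound} and combining exponents, both cases yield a contribution of $O(N^{k - r + d - 1 - 1/d - \min(A/d - d,\,0) + \epsilon'})$ per type. Since $r \ge d$ by construction, the worst exponent occurs at $r = d$, producing precisely the claimed bound.

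The main obstacle is the combinatorial bookkeeping: in particular, tracking how the global constraint $\sum_i u_i = 0$ is absorbed differently according to whether $T_0$ is empty or not, and verifying that the two regimes conspire to produce the same final exponent. A secondary point of care is checking that the residual sum over $(\bfw, y_1, \dots, y_d)$ genuinely matches the form of $Q$, i.e., that the linear maps $L_i$ are nonzero and that the non-vanishing conditions $w_j \neq 0$ coincide with $L_i(\bfm) \neq 0$---both automatic in the two cases above, but requiring verification.
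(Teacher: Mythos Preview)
Your strategy matches the paper's---stratify by degeneracy pattern, then bound each stratum via Lemma~\ref{lem:Q_Bound}---but there is one simplification you have overlooked and one genuine bookkeeping gap.

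The simplification: since $\bfx\in\calBN$ has \emph{distinct} coordinates, every group $T_j$ with $j\ge 1$ is automatically a singleton and $\#T_0\le 1$, so your partition apparatus collapses to tracking just $\I_1=\{i:x_i=1\}$ (at most one element) and $\I_2=\{i\notin\I_1:u_i=0\}$, which is exactly the paper's stratification. The gap: for a fixed type and fixed $(w_1,\dots,w_d,y_1,\dots,y_d)$ you count only the compatible $\bfu$'s, but each $\bfz=(\bfu,\bfx)$ also carries the free values $y_{d+1},\dots,y_r\in[2,N]$ attached to the zero groups, contributing an extra factor $N^{r-d}$ that is absent from your per-type exponent $k-r+d-1-\tfrac{1}{d}-\min(\tfrac{A}{d}-d,0)$. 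With this factor restored, each type contributes $O(N^{k-1-1/d-\min(A/d-d,0)+\epsilon'})$ directly (in both the $T_0=\emptyset$ and $T_0\neq\emptyset$ cases), so no optimisation over $r$ is needed; your final bound is correct, but the intermediate per-type claim is not justified as written. (Minor aside: the integrality of the constant $\sum_{i\in T_0}u_i$ is irrelevant, since $|e(c)|=1$ for every real $c$ and only $|I(\phi,\J)|$ is at stake.)
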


\begin{proof}
For (possibly empty) index sets $\I_{1},\I_{2}\subseteq\left[k\right]$,
denote by $\E_{k,d}^{\epsilon}\left(\I_{1},\I_{2}\right)$ the set
of $\bfz=\left({\bf u,{\bf x}}\right)\in\E_{k,d}^{\epsilon}$ such
that 
\[
\left\{ i\in\left[k\right]:\,x_{i}=1\right\} =\I_{1}
\]
and
\[
\left\{ i\in\left[k\right]\setminus\I_{1}:\,u_{i}=0\right\} =\I_{2}.
\]
Assume that the set $\E_{k,d}^{\epsilon}\left(\I_{1},\I_{2}\right)$
is nonempty. Then $d=k-\#\left(\I_{1}\cup\mathcal{I}_{2}\right)$,
and since $x_{i}$ are distinct we have $\#\I_{1}\le1$.

Consider the sum
\begin{equation}
S\bigl(\E_{k,d}^{\epsilon}\left(\I_{1},\I_{2}\right),\J\bigr)\eqdef\sum_{\bfz\in\E_{k,d}^{\epsilon}\left(\I_{1},\I_{2}\right)}\left|I\left(\phi\left(\bfz,\cdot\right),\J\right)\right|\label{eq:Sum_Ek_II}
\end{equation}
and recall that the summation in (\ref{eq:Sum_Ek_II}) is over $\bfz=\left({\bf u,{\bf x}}\right)$
belonging to a subset of $\mathcal{U}_{k}^{\epsilon}\times\calBN$.
We first determine the constraints on ${\bf u}$. By the conditions
$u_{i}=0$ ($i\in\mathcal{I}_{2})$ and $u_{1}+\dots+u_{k}=0$, the
summation is restricted to $\bfu$ whose entries linearly depend on
$k-1-\#\mathcal{I}_{2}$ of the variables $u_{1},\dots,u_{k},$ i.e.,
there exists a set
\[
\left\{ j_{1},\dots,j_{k-1-\#\mathcal{I}_{2}}\right\} \subseteq\left[k\right]
\]
such that each $u_{i}$ is a linear combination of $u_{j_{1}},\dots,u_{j_{k-1-\#\mathcal{I}_{2}}}$
(since ${\bf u}\ne{\bf 0}_{k}$, we have $\#\mathcal{I}_{2}<k-1$).
For each $\bfz\in\E_{k,d}^{\epsilon}\left(\I_{1},\I_{2}\right)$,
we can then write 
\begin{equation}
\phi\left(\bfz,\alpha\right)=\sum_{i\in\left[k\right]\setminus\mathcal{I}_{2}}L_{i}(u_{j_{1}},\dots,u_{j_{k-1-\#\mathcal{I}_{2}}})x_{i}^{\alpha}\label{eq:nondeg_sum}
\end{equation}
where $L_{i}$ are nonzero linear combinations of the variables $u_{j_{1}},\dots,u_{j_{k-1-\#\mathcal{I}_{2}}}$
(determined only by $\mathcal{I}_{2}$).

There are $d$ non-constant terms in the sum (\ref{eq:nondeg_sum})
corresponding to the indices $i\in\left[k\right]\setminus\left(\mathcal{I}_{1}\cup\mathcal{I}_{2}\right)$
(note that if $\I_{1}$ is nonempty, then one of the terms in the
sum (\ref{eq:nondeg_sum}) is constant). Moreover, for each $i\in\I_{2}$,
the value of $x_{i}$ does not affect $\phi\left(\bfz,\alpha\right)$,
and $x_{i}$ ranges between $2$ and $N$, so that the function (\ref{eq:nondeg_sum})
appears $O(N^{\#\mathcal{I}_{2}})$ times in (\ref{eq:Sum_Ek_II})
upon summing over ${\bf z}$. Hence, letting $\mathbf{L}=\left(L_{i}\right)_{i\in\left[k\right]\setminus\left(\mathcal{I}_{1}\cup\mathcal{I}_{2}\right)}$,
we have
\[
S(\E_{k,d}^{\epsilon}\left(\I_{1},\I_{2}\right),\J)\ll N^{\#\I_{2}}Q\left(d,k-1-\#\mathcal{I}_{2},\mathbf{L},N,\J,\epsilon\right).
\]
By Lemma \ref{lem:Q_Bound}, we have
\[
N^{\#\I_{2}}Q\left(d,k-1-\#\mathcal{I}_{2},\mathbf{L},N,\J,\epsilon\right)\ll N^{k-1-\frac{1}{d}-\min\left(\frac{A}{d}-d,0\right)+\epsilon}.
\]
Summing over all configurations $\E_{k,d}^{\epsilon}\left(\I_{1},\I_{2}\right)$
completes the proof. 
\end{proof}
The argument for majorizing $S(\V_{k,d}^{\epsilon},\J)$ by a suitably
weighted sum $Q\bigl(d,j,{\bf L},N,\J,\epsilon\bigr)$ for some $d,j,{\bf L}$
is similar but the combinatorics is somewhat more technical. 
\begin{prop}
\label{prop: variance reduction}Let $k\geq2$, $\epsilon>0$. If
$d=0$ then 
\begin{equation}
S(\V_{k,0}^{\epsilon},\J)=\O(N^{2k-1+\epsilon})\label{eq: most degenerate case}
\end{equation}
as $N\to\infty$, and if $0<d\leq2k$ then
\begin{equation}
S(\V_{k,d}^{\epsilon},\J)=\O(N^{2k-2-\frac{1}{d}-\min\left(\frac{A}{d}-d,0\right)+\epsilon})\label{eq: variance variable reduction}
\end{equation}
as $N\to\infty$.
\end{prop}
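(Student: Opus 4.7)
The plan is to mirror the combinatorial decomposition from the proof of Proposition~\ref{prop: reduction expectation}, adapted to the doubled configuration space $\left(\bfu,\bfv,\bfx,\bfy\right)\in(\mathcal{U}_{k}^{\epsilon})^{2}\times\calBN^{2}$ that governs the variance. The cases $d=0$ and $d>0$ will be handled separately.

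For $d=0$, the derivative $\phi'\left(\bfz,\cdot\right)$ vanishes identically on $\J$, so the trivial bound $\left|I\left(\phi\left(\bfz,\cdot\right),\J\right)\right|\le1$ applies and it suffices to count $\V_{k,0}^{\epsilon}$. Grouping the terms of $\phi\left(\bfz,\alpha\right)=\sum_{i}u_{i}x_{i}^{\alpha}+\sum_{j}v_{j}y_{j}^{\alpha}$ by distinct base value and invoking the linear independence of $\alpha\mapsto z^{\alpha}$ for distinct $z>1$, the vanishing of $\phi'$ forces, for each $z>1$ appearing among the $x_{i}$ or $y_{j}$, the grouped-coefficient identity $\sum_{i:\,x_{i}=z}u_{i}+\sum_{j:\,y_{j}=z}v_{j}=0$. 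Combined with the constraints $\sum u_{i}=\sum v_{j}=0$, this forces $\{y_{j}\}$ to match $\{x_{i}\}$ as multisets (modulo the distinguished value~$1$), leaving $O(N^{k})$ choices for $\bfx$, $O(1)$ matching choices for $\bfy$, and $O(N^{k-1+\epsilon})$ choices for the coefficient tuples $(\bfu,\bfv)$, for a total of $O(N^{2k-1+\epsilon})$, which yields \eqref{eq: most degenerate case}.

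For $d>0$, I would partition $\V_{k,d}^{\epsilon}$ into $O(1)$ combinatorial types $\tau$ recording: the subsets $\I_{x}=\{i:x_{i}=1\}$ and $\I_{y}=\{j:y_{j}=1\}$ (each of size at most one); the collision pattern $\{(i,j):x_{i}=y_{j}\}$; and the set of grouped coefficients $c_{z}=\sum_{i:x_{i}=z}u_{i}+\sum_{j:y_{j}=z}v_{j}$ forced to vanish. For each type $\tau$, freezing these linear constraints recasts the phase function as
\[
\phi\left(\bfz,\alpha\right)=\sum_{i=1}^{d}L_{i}(\bfm)\,t_{i}^{\alpha},
\]
where $\bfm\in\mathbb{Z}^{j(\tau)}$ parametrises the remaining freedom in $(\bfu,\bfv)$, the $L_{i}$ are nonzero linear forms depending only on $\tau$, and $2\le t_{1}<\dots<t_{d}\le N$ are the surviving distinct position variables. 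The remaining entries of $\bfx,\bfy$ (those set to $1$ or absorbed into a vanished $c_{z}$) are otherwise free in $\{2,\dots,N\}$, contributing a factor $N^{w(\tau)}$. A linear-algebra tally shows $w(\tau)+j(\tau)=2k-2$, reflecting the $2k$ components of $(\bfu,\bfv)$ minus the two sum-to-zero constraints. Applying Lemma~\ref{lem:Q_Bound} then gives
\[
S(\V_{k,d}^{\epsilon}(\tau),\J)\ll N^{w(\tau)}Q\left(d,j(\tau),\mathbf{L},N,\J,\epsilon\right)\ll N^{2k-2-\frac{1}{d}-\min\left(\frac{A}{d}-d,\,0\right)+\epsilon},
\]
and summing over the finitely many $\tau$ proves \eqref{eq: variance variable reduction}.

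The main obstacle will be the combinatorial bookkeeping required to enumerate the types $\tau$ without over-counting, and to verify the dimension identity $w(\tau)+j(\tau)=2k-2$ uniformly across all of them. The possibility of $x_{i}$--$y_{j}$ collisions is a genuine new feature compared with Proposition~\ref{prop: reduction expectation}: each collision simultaneously removes a position variable (altering $w(\tau)$) and merges two coefficients into a single form $L_{i}$ (altering $j(\tau)$), and one must check that these two effects exactly cancel in the dimension count. A secondary subtlety in the $d=0$ regime is ensuring that partial matchings --- where only some values of $\bfx$ pair off with values of $\bfy$ while others have $u_{i}=0$ or $x_{i}=1$ --- are all subsumed within the claimed $N^{2k-1+\epsilon}$ budget.
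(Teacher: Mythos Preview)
Your proposal is correct and follows essentially the same approach as the paper: the paper partitions $\V_{k,d}^{\epsilon}$ by a tuple $\bftau=(\I_{1},\I_{1}',\I_{2},\I_{2}',\I_{3},\I_{3}',\I_{4},\I_{4}')$ recording exactly the data in your $\tau$ (values equal to $1$, the collision pattern, and which grouped coefficients vanish), and then bounds each piece by $N^{\#\I_{3}+\#\I_{4}+\#\I_{4}'}Q(d,2k-2-\#\I_{3}-\#\I_{4}-\#\I_{4}',\mathbf{L},N,\J,\epsilon)$ via Lemma~\ref{lem:Q_Bound}. The dimension identity $w(\tau)+j(\tau)=2k-2$ that you flag as the main obstacle is verified in the paper through a short case analysis (items (i)--(iii)) showing that the constraint $\sum v_{j}=0$ becomes redundant with the other constraints only when $\I_{3}\cup\I_{4}=\I_{3}'\cup\I_{4}'=[k]$, a configuration that forces $d=0$; this is precisely the mechanism behind both your identity for $d>0$ and the extra factor of $N$ in the $d=0$ bound.
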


\begin{proof}
Let $0\leq d\leq2k$, and let $\I_{1},\I_{1}',\I_{2},\I_{2}',\I_{3},\I_{3}',\I_{4},\I_{4}'\subseteq\left[k\right]$
be (possibly empty) sets of indices. Fixing $\bftau\eqdef\left(\I_{1},\I_{1}',\I_{2},\I_{2}',\I_{3},\I_{3}',\I_{4},\I_{4}'\right)$,
we denote by $\V_{k,d}^{\epsilon}\left(\bftau\right)$ the set of
vectors $\bfz=\left({\bf u},{\bf v},{\bf x},{\bf y}\right)\in\V_{k,d}^{\epsilon}$
for which 
\begin{align*}
\{i\in[k]:\,x_{i}=1\}=\I_{1},\quad\{j\in[k]:\,y_{j}=1\} & =\I_{1}',\\
\{i\in[k]\setminus\mathcal{I}_{1}:\,\exists_{j(i)\in[k]}\,x_{i}=y_{j(i)}\} & =\mathcal{I}_{2},\\
\{j\in[k]\setminus\mathcal{I}_{1}':\,\exists_{i(j)\in[k]}\,x_{i\left(j\right)}=y_{i}\} & =\mathcal{I}_{2}',\\
\{i\in\I_{2}:u_{i}+v_{j(i)}=0,\mathrm{where}\,j(i)\,\mathrm{is}\,\mathrm{s.t.}\,x_{i}=y_{j(i)}\} & =\I_{3},\\
\{j\in\I_{2}':u_{i\left(j\right)}+v_{j}=0,\mathrm{where}\,i(j)\,\mathrm{is}\,\mathrm{s.t.}\,x_{i\left(j\right)}=y_{j}\} & =\I_{3}',
\end{align*}
and 
\begin{align*}
\{i\in\left[k\right]\setminus(\mathcal{I}_{1}\cup\I_{2}):u_{i}=0\} & =\I_{4}\\
\{j\in\left[k\right]\setminus(\mathcal{I}_{1}'\cup\I_{2}'):v_{j}=0\} & =\I_{4}'.
\end{align*}
Assume that the set $\V_{k,d}^{\epsilon}\left(\bftau\right)$ is nonempty.
Then $\#\I_{2}=\#\I_{2}'$, $\#\I_{3}=\#\I_{3}'$ and
\begin{equation}
d=2k-\left(\#\I_{1}+\#\I_{1}'+\#\mathcal{I}_{2}+\#\I_{3}+\#\I_{4}+\#\I_{4}'\right).\label{eq:d_equation}
\end{equation}

Consider the sum
\begin{equation}
S\bigl(\V_{k,d}^{\epsilon}(\bftau),\J\bigr)\eqdef\sum_{\bfz\in\V_{k,d}^{\epsilon}\left(\bftau\right)}\left|I\left(\phi\left(\bfz,\cdot\right),\J\right)\right|.\label{eq:Sum_V_tau}
\end{equation}
We first consider the constraints on ${\bf u},{\bf v}$ when summing
in (\ref{eq:Sum_V_tau}) over $\bfz=\left({\bf u},{\bf v},{\bf x},{\bf y}\right)\in\V_{k,d}^{\epsilon}\left(\bftau\right)$:\\
\\
i) The conditions $u_{i}=0$ ($i\in\mathcal{I}_{4})$ and $u_{1}+\dots+u_{k}=0$
determine $\#\mathcal{I}_{4}+1$ of the variables $u_{i}$ in terms
of the other $k-1-\#\mathcal{I}_{4}$ variables $u_{i}$. Note that
${\bf u}\ne{\bf 0}_{k}$, so that $\#\mathcal{I}_{4}<k-1$. \\
\\
ii) The conditions $v_{j}=0$ ($j\in\I_{4}')$, $u_{i\left(j\right)}+v_{j}=0$,
$j\in\I_{3}'$, determine $\#\mathcal{I}_{4}'+\#\mathcal{I}_{3}'$
of the variables $v_{j}$ in terms of the variables $u_{i}.$\\
\\
iii) The condition $v_{1}+\dots+v_{k}=0$ trivializes if $\mathcal{I}_{3}\cup\I_{4}=\mathcal{I}_{3}'\cup\I_{4}'=\left[k\right]$.
Otherwise, if $\mathcal{I}_{3}'\cup\I_{4}'\ne\left[k\right]$, it
determines another variable $v_{j}$ ($j\notin\mathcal{I}_{3}'\cup\I_{4}')$
in terms of the rest of the variables. If $\mathcal{I}_{3}'\cup\I_{4}'$=$\left[k\right]$
and $\mathcal{I}_{3}\cup\I_{4}\ne\left[k\right]$, it determines another
variable $u_{i}$ in terms of the rest of the variables.\\

To conclude, we have found that there exist sets
\[
\left\{ i_{1},\dots,i_{l}\right\} \subseteq\left[k\right],\left\{ j_{1},\dots,j_{m}\right\} \subseteq\left[k\right]
\]
so that the variables $u_{1},\dots,u_{k},v_{1},\dots,v_{k}$ linearly
depend on $u_{i_{1}},\dots u_{i_{l}},v_{j_{1}},\dots v_{j_{m}}$,
and
\begin{equation}
l+m=\left(k-1-\#\mathcal{I}_{4}\right)+\left(k-\#\mathcal{I}_{3}'-\#\mathcal{I}_{4}'\right)-1=2k-2-\#\mathcal{I}_{3}'-\#\mathcal{I}_{4}-\#\mathcal{I}_{4}'\label{eq:l+m_non_deg}
\end{equation}
unless $\mathcal{I}_{3}\cup\I_{4}=\mathcal{I}_{3}'\cup\I_{4}'=\left[k\right]$,
in which case we have
\begin{equation}
l+m=\left(k-1-\#\mathcal{I}_{4}\right)+\left(k-\#\mathcal{I}_{3}'-\#\mathcal{I}_{4}'\right)=2k-1-\#\mathcal{I}_{3}'-\#\mathcal{I}_{4}-\#\mathcal{I}_{4}'.\label{eq:l+m_deg}
\end{equation}
For each ${\bf z}\in\V_{k,d}^{\epsilon}\left(\bftau\right)$, we can
then write
\begin{align}
\phi\left(\bfz,\alpha\right) & =\sum_{i\in\left[k\right]\setminus\left(\mathcal{I}_{3}\cup\mathcal{I}_{4}\right)}L_{i}\left(u_{i_{1}},\dots u_{i_{l}},v_{j_{1}},\dots v_{j_{m}}\right)x_{i}^{\alpha}\label{eq:non_deg_sum_var}\\
 & +\sum_{j\in\left[k\right]\setminus\left(\mathcal{I}_{2}'\cup\mathcal{I}_{4}'\right)}L_{j}\left(u_{i_{1}},\dots u_{i_{l}},v_{j_{1}},\dots v_{j_{m}}\right)y_{i}^{\alpha}\nonumber 
\end{align}
where $L_{i}$ are nonzero linear combinations of the variables $u_{i_{1}},\dots u_{i_{l}},v_{j_{1}},\dots v_{j_{m}}$
(determined by $\bftau$).

The total number of non-constant terms in (\ref{eq:non_deg_sum_var})
is $d$ (see (\ref{eq:d_equation}); note that if at least one of
the sets $\mathcal{I}_{1}$ or $\mathcal{I}_{1}'$ is nonempty, then
one or two terms in the sum (\ref{eq:nondeg_sum}) are constant).
For each $i\in\mathcal{I}_{3}\cup\mathcal{I}_{4}$, the value of $x_{i}$
does not affect $\phi\left(\bfz,\alpha\right)$, and $x_{i}$ ranges
between $2$ and $N$. Likewise, for each $j\in\mathcal{I}_{4}'$,
the value of $y_{j}$ does not affect $\phi\left(\bfz,\alpha\right)$,
and $y_{j}$ ranges between $2$ and $N$. Hence, the function (\ref{eq:non_deg_sum_var})
appears $O(N^{\#\I_{3}+\#\I_{4}+\#\I_{4}'})$ times in (\ref{eq:Sum_V_tau})
upon summing over ${\bf z}$.

If $d=0,$ then the phase function is constant (in fact, it must vanish),
and therefore 
\[
S\bigl(\V_{k,0}^{\epsilon}(\bftau),\J\bigr)\ll N^{\#\I_{3}+\#\I_{4}+\#\I_{4}'}N^{\left(1+\epsilon\right)\left(l+m\right)}\ll N^{\left(2k-1\right)\left(1+\epsilon\right)}
\]
in either of the cases (\ref{eq:l+m_non_deg}), (\ref{eq:l+m_deg}).
If $d>0$, we can choose 
\[
\mathbf{L}=\left(L_{i},L_{j}\right)_{i\in\left[k\right]\setminus\left(\mathcal{I}_{1}\cup\mathcal{I}_{3}\cup\mathcal{I}_{4}\right),\,j\in\left[k\right]\setminus\left(\mathcal{I}_{1}'\cup\mathcal{I}_{2}'\cup\mathcal{I}_{4}'\right)}
\]
and get that
\[
S\bigl(\V_{k,d}^{\epsilon}(\bftau),\J\bigr)\ll N^{\#\I_{3}+\#\I_{4}+\#\I_{4}'}Q\left(d,l+m,\mathbf{L},N,\J,\epsilon\right).
\]
Clearly, either $\mathcal{I}_{3}\cup\I_{4}\ne\left[k\right]$ or $\mathcal{I}_{3}'\cup\I_{4}'\ne\left[k\right]$,
so that by (\ref{eq:l+m_non_deg}) we have
\[
Q\left(d,l+m,\mathbf{L},N,\J,\epsilon\right)=Q\left(d,2k-2-\#\mathcal{I}_{3}-\#\mathcal{I}_{4}-\#\mathcal{I}_{4}',\mathbf{L},N,\J,\epsilon\right).
\]
By Lemma \ref{lem:Q_Bound}, we establish the bound
\begin{align*}
N^{\#\I_{3}+\#\I_{4}+\#\I_{4}'}Q\left(d,2k-2-\#\mathcal{I}_{3}-\#\mathcal{I}_{4}-\#\mathcal{I}_{4}',\mathbf{L},N,\J,\epsilon\right)\ll N^{2k-2-\frac{1}{d}-\min\left(\frac{A}{d}-d,0\right)+\epsilon}.
\end{align*}
As there are $\ll_{k}1$ many sets $\V_{k,d}^{\epsilon}\left(\bftau\right)$,
summing over $\bftau$ concludes the proof. 
\end{proof}
\begin{cor}
\label{cor:MainCorollary}For each $A>k^{2}-k-1$ there exists $\rho=\rho\left(A\right)>0$
such that for any $0<d\leq k$, 
\begin{equation}
S(\E_{k,d}^{\epsilon},\J)=\O(N^{k-\rho})\label{eq: expectation degenerate}
\end{equation}
as $N\to\infty$. Further, for each $A>4k^{2}-4k-1$ there exists
$\rho=\rho\left(A\right)>0$ such that for any $0\leq d\leq2k$,
\begin{equation}
S(\V_{k,d}^{\epsilon},\J)=\O(N^{2k-\rho})\label{eq: variance degenerate}
\end{equation}
as $N\to\infty$.
\end{cor}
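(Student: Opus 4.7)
The corollary is essentially a matter of optimizing over the degeneracy parameter $d$ in the explicit bounds already established in Propositions \ref{prop: reduction expectation} and \ref{prop: variance reduction}. The plan is to trace through the exponents of $N$ produced there, determine for which values of $A$ the exponent falls strictly below $k$ (resp.\ $2k$), and verify that the stated thresholds are exactly the worst cases over the relevant ranges of $d$.

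For the expectation bound, Proposition \ref{prop: reduction expectation} gives
\[
S(\E_{k,d}^{\epsilon},\J) \ll N^{k-1-\frac{1}{d}-\min\left(\frac{A}{d}-d,0\right)+\epsilon}
\]
for $0 < d \le k$. I want this exponent to be strictly less than $k$, which means
\[
1 + \frac{1}{d} + \min\left(\frac{A}{d}-d,0\right) > \epsilon.
\]
If $A \ge d^{2}$ the minimum vanishes and the condition holds trivially. Otherwise it rearranges to $A > d^{2} - d - 1 + d\epsilon$. As $d$ ranges over $\{1,\ldots,k\}$, the function $d^{2}-d-1$ is maximized at $d=k$, so the binding constraint is $A > k^{2}-k-1$. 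Under the hypothesis, one chooses $\epsilon>0$ small enough so that the strict inequality holds uniformly in $d$, and any such gap furnishes a valid $\rho > 0$ in \eqref{eq: expectation degenerate}.

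For the variance bound, the $d=0$ case is immediate: Proposition \ref{prop: variance reduction} gives $S(\V_{k,0}^{\epsilon},\J) = \O(N^{2k-1+\epsilon})$, which beats $N^{2k}$ by a positive power once $\epsilon$ is small, with no hypothesis on $A$. For $0 < d \le 2k$ the bound is
\[
S(\V_{k,d}^{\epsilon},\J) \ll N^{2k-2-\frac{1}{d}-\min\left(\frac{A}{d}-d,0\right)+\epsilon},
\]
so the analogous condition for the exponent to lie strictly below $2k$ becomes $A > d^{2} - 2d - 1$. The function $d^{2}-2d-1$ is maximized at the endpoint $d = 2k$, giving the threshold $A > (2k)^{2} - 2(2k) - 1 = 4k^{2}-4k-1$, matching the hypothesis. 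Taking $\epsilon$ small enough that the inequality survives uniformly in $1 \le d \le 2k$ yields a common $\rho = \rho(A) > 0$.

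The main obstacle is not in this corollary itself but already resolved in the repulsion estimate of Lemma \ref{lem:MainRepulsionLemma} and in the degeneracy bookkeeping of Propositions \ref{prop: reduction expectation}--\ref{prop: variance reduction}. The only subtlety worth noting here is consistency of the two thresholds: the variance threshold $4k^{2}-4k-1$ strictly dominates the expectation threshold $k^{2}-k-1$, so it is the former that determines the assumption in Theorem \ref{thm: variance bound}, while the expectation estimate is automatically satisfied in the same regime.
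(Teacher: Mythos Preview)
Your proof is correct and follows essentially the same approach as the paper: you invoke Propositions~\ref{prop: reduction expectation} and~\ref{prop: variance reduction}, rewrite the requirement that the exponent fall below $k$ (resp.\ $2k$) as the inequality $A>d^{2}-d-1$ (resp.\ $A>d^{2}-2d-1$), and observe that these quadratics are maximized at the right endpoints $d=k$ and $d=2k$. If anything, you are slightly more explicit than the paper in separating the trivial case $A\ge d^{2}$ and in tracking the $\epsilon$-dependence before choosing $\epsilon$ small.
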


\begin{proof}
Recall that by Proposition \ref{prop: reduction expectation} we have
\[
S(\E_{k,d}^{\epsilon},\J)=\O\bigl(N^{k-1-\frac{1}{d}-\min\left(\frac{A}{d}-d,0\right)+\epsilon}\bigr).
\]
 Hence, we obtain (\ref{eq: expectation degenerate}) when $-1-\frac{1}{d}-\frac{A}{d}+d<0$,
or equivalently when $A>d(d-1)-1$ for all $0<d\le k$. Since $d\mapsto d(d-1)-1$
is increasing on the interval $\left[1,k\right]$ it attains its maximum
value at $d=k$, which is equal to $k^{2}-k-1$. Hence, (\ref{eq: expectation degenerate})
holds whenever $A>k^{2}-k-1$.

Now recall that Proposition \ref{prop: variance reduction} yields
$S(\V_{k,0}^{\epsilon},\J)=\O(N^{2k-1+\epsilon})$ for $d=0$ and
\begin{align*}
S(\V_{k,d}^{\epsilon},\J) & =\O(N^{2k-2-\frac{1}{d}-\min\left(\frac{A}{d}-d,0\right)+\epsilon})
\end{align*}
for $0<d\le2k$. Thus, (\ref{eq: variance degenerate}) holds when
$-2-\frac{1}{d}-\frac{A}{d}+d<0$, i.e., when $A>d(d-2)-1$ for all
$0<d\le2k$. Since $d(d-2)-1$ is increasing as a function of $d\in\left[1,2k\right]$,
the maximum is attained at $d=2k$ and is equal to $4k^{2}-4k-1$.
Therefore, (\ref{eq: variance degenerate}) holds whenever $A>4k^{2}-4k-1$.
\end{proof}
Substituting the bound (\ref{eq: expectation degenerate}) in (\ref{eq: rep of expectation remainder}),
we can now find a regime in which expectation of $R_{k}\left(f,\cdot,N\right)$
is asymptotic to the average of $f$. We formulate the next proposition
for $k>2$, since for $k=2$ Proposition \ref{prop: Expectation}
yielded a stronger result (holding for $A>0$).
\begin{prop}
\label{prop:Expectation-k>2}Let $k>2$, $A>k^{2}-k-1$ and $\J$
be given by (\ref{def: J}). Then there exists $\rho=\rho\left(A\right)>0$
such that
\[
\int_{\mathcal{J}}R_{k}\left(f,\alpha,N\right)\,\mathrm{d}\alpha=\int_{\mathbb{R}^{k-1}}f\left(\bfx\right)\,\mathrm{d}\bfx+\O(N^{-\rho})
\]
as $N\to\infty$.
\end{prop}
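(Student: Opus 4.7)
The plan is to simply assemble the pieces already developed. By Lemma \ref{lem: expectation for higher corr via oscil inte}, for every $\epsilon>0$ and every $t>0$ we have
\[
\int_{\mathcal{J}}R_{k}\left(f,\alpha,N\right)\,\mathrm{d}\alpha-C_{k}\left(N\right)\int_{\mathbb{R}^{k-1}}f\left(\bfx\right)\,\mathrm{d}\bfx \ll \frac{1}{N^{k}}\sum_{0<d\leq k}S(\E_{k,d}^{\epsilon},\J)+N^{-t},
\]
so the question is reduced to bounding each of the at most $k$ sums $S(\E_{k,d}^{\epsilon},\J)$ by a quantity of the form $N^{k-\rho}$ for some $\rho>0$.

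This is exactly the content of the first half of Corollary \ref{cor:MainCorollary}: for $A>k^{2}-k-1$ there is some $\rho_{0}=\rho_{0}(A)>0$ such that, uniformly for $1\le d\le k$,
\[
S(\E_{k,d}^{\epsilon},\J)=\O(N^{k-\rho_{0}}).
\]
Choosing $\epsilon<\rho_{0}/2$ (recall that Corollary \ref{cor:MainCorollary} absorbs an $N^{\epsilon}$ factor into the exponent) and summing over the finitely many values of $d$ yields $E_{k}(N,\J,\epsilon)=\O(N^{-\rho_{1}})$ for some $\rho_{1}>0$.

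It remains to replace $C_{k}(N)$ by $1$. Since
\[
C_{k}(N)=\prod_{j=1}^{k-1}\bigl(1-\tfrac{j}{N}\bigr)=1+\O(N^{-1}),
\]
and since $\int_{\mathbb{R}^{k-1}}f(\bfx)\,\mathrm{d}\bfx$ is a fixed constant depending only on $f$, we have
\[
C_{k}(N)\int_{\mathbb{R}^{k-1}}f(\bfx)\,\mathrm{d}\bfx=\int_{\mathbb{R}^{k-1}}f(\bfx)\,\mathrm{d}\bfx+\O(N^{-1}).
\]
Combining the last two displays with the bound from Lemma \ref{lem: expectation for higher corr via oscil inte} (and choosing $t$ large, say $t\ge1$) gives the claim with $\rho=\min(\rho_{1},1)$.

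Since every step is a direct citation of an earlier result, there is no real obstacle here; the only thing worth flagging is the bookkeeping of the parameter $\epsilon$, which must be chosen small compared to the $\rho_{0}$ extracted from Corollary \ref{cor:MainCorollary} so that the $N^{\epsilon}$ factors inherited through Proposition \ref{prop: reduction expectation} and Lemma \ref{lem:Q_Bound} do not swallow the power saving. Note also that the stronger result obtained for $k=2$ in Proposition \ref{prop: Expectation} — in which no lower bound on $A$ is required, and the error is $\O(N^{-\min(2,A)+\epsilon})$ — is not available here because for $k>2$ we only control $S(\E_{k,d}^{\epsilon},\J)$ through the repulsion principle, which only succeeds once $A$ exceeds the threshold $k^{2}-k-1$.
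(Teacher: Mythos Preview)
Your argument is correct and follows exactly the route the paper indicates: substitute the bound \eqref{eq: expectation degenerate} from Corollary~\ref{cor:MainCorollary} into \eqref{eq: rep of expectation remainder}, and then absorb the harmless factor $C_k(N)=1+O(N^{-1})$. Your remark on choosing $\epsilon$ small relative to the power saving is a fair piece of bookkeeping that the paper leaves implicit.
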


Finally, Theorem \ref{thm: variance bound} also follows from Corollary
\ref{cor:MainCorollary}:
\begin{proof}[Proof of Theorem \ref{thm: variance bound}]
The bound (\ref{eq: variance bound}), $k\ge2$, follows by substituting
(\ref{eq: variance degenerate}) in (\ref{eq: rep of variance remainder}).
\end{proof}

\section{Proof of Theorem \ref{thm: higher order} and Corollary \ref{cor: approximation to gap distribution}}

With the variance bound from Theorem \ref{thm: variance bound} at
hand, we can deduce Theorem \ref{thm: higher order} by rather soft
arguments from a general principle. Although the argument is fairly
standard, we have not found it stated explicitly in the literature
in a form that readily applies to our case, so we decided to give
the details in full. The following proposition deduces Theorem \ref{thm: higher order}
from the variance bound, recorded in Theorem \ref{thm: variance bound},
at once.
\begin{prop}
\label{prop: L2 implies almost sure convergence}Let $k\ge2,$ let
$\mathcal{I}\subset\mathbb{R}$ be a bounded interval, and let $c_{k}\left(N\right)$
be a sequence satisfying $c_{k}\left(N\right)\to1$ as $N\to\infty$.
Suppose we are given a real-valued sequence $\left(\vartheta_{n}(\alpha)\right)_{n\geq1}$
for each $\alpha\in\mathcal{I}$ so that $\mathcal{I}\ni\alpha\mapsto\vartheta_{n}(\alpha)$
is a continuous map for each fixed $n\geq1$. Assume that there exists
$\rho>0$ such that for all $f\in C_{c}^{\infty}(\mathbb{R}^{k-1})$,
\begin{align}
\int_{\mathcal{I}}\left(R_{k}(f,(\vartheta_{n}(\alpha)),N)-c_{k}\left(N\right)\int_{\mathbb{R}^{k-1}}f(\bfx)\,\mathrm{d}\bfx\right)^{2}\,\mathrm{d}\alpha & =\O(N^{-\rho}).\label{eq:Var_Assum}
\end{align}
as $N\to\infty$, then the sequence $(\vartheta_{n}(\alpha))_{n\geq1}$
has Poissonian $k$-level correlation for almost every $\alpha\in\mathcal{I}$.
\end{prop}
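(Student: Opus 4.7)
\emph{Plan.} The plan is the standard upgrade from an $L^{2}$ bound to almost-sure convergence: Chebyshev plus Borel--Cantelli along a sparse subsequence, a countable dense family of test functions, and a monotone-sandwich interpolation between subsequence values. Continuity of $\alpha\mapsto\vartheta_{n}(\alpha)$ makes $\alpha\mapsto R_{k}(f,(\vartheta_{n}(\alpha)),N)$ a finite sum of continuous functions (since $f$ is compactly supported), hence measurable.

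\emph{Subsequence Borel--Cantelli and countable dense family.} First I would choose $N_{j} = \lceil j^{c}\rceil$ with $c > 1/\rho$, so that $\sum_{j} N_{j}^{-\rho}<\infty$ while $N_{j+1}/N_{j}\to 1$. For each fixed $f \in C_{c}^{\infty}$, Chebyshev applied to \eqref{eq:Var_Assum} yields
\[
\mu\Bigl(\Bigl\{\alpha \in \mathcal{I} : \bigl|R_{k}(f,(\vartheta_{n}(\alpha)), N_{j}) - c_{k}(N_{j}){\textstyle\int}f\bigr| > \epsilon\Bigr\}\Bigr) \ll_{f} \epsilon^{-2}N_{j}^{-\rho},
\]
and Borel--Cantelli (letting $\epsilon\to 0$ along a sequence) produces a full-measure set $\mathcal{A}_{f} \subseteq \mathcal{I}$ on which $R_{k}(f,(\vartheta_{n}(\alpha)), N_{j}) \to \int f$. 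Next, I would fix a countable family $\mathcal{F}\subset C_{c}^{\infty}$ rich enough that, for every non-negative $f\in C_{c}^{\infty}$ and every $\eta > 0$, there exist $g_{\pm} \in \mathcal{F}$ and $\delta>0$ with $g_{-}(u)\le f(\lambda u)\le g_{+}(u)$ for all $u$ and $\lambda\in[1,1+\delta]$, and $\int(g_{+} - g_{-}) < \eta$; smooth mollifications of indicators of dyadic-rational boxes furnish such a family, exploiting the Lipschitz continuity of $f$. The intersection $\mathcal{A}\eqdef\bigcap_{g\in \mathcal{F}}\mathcal{A}_{g}$ still has full measure.

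\emph{Interpolation and main obstacle.} For $\alpha\in\mathcal{A}$ and non-negative $f \in C_{c}^{\infty}$ (the signed case reducing to this), pick $g_{\pm}, \delta$ as above. For $j$ large with $N_{j+1}/N_{j}\le 1+\delta$ and any $N\in[N_{j}, N_{j+1}]$, writing $N = \lambda N_{j}$ gives the pointwise sandwich $f(N(\Delta-\bfm))\le g_{+}(N_{j}(\Delta-\bfm))$; summing over $\bfx\in \calBN(N)$ and $\bfm\in \mathbb{Z}^{k-1}$ and dividing by $N$ yields
\[
R_{k}(f, \alpha, N) \le \tfrac{N_{j}}{N} R_{k}(g_{+}, \alpha, N_{j}) + E(\alpha, j, N),
\]
with $E$ the contribution from tuples in $\calBN(N)\setminus \calBN(N_{j})$. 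The symmetric lower bound uses $g_{-}$; combined with $R_{k}(g_{\pm}, \alpha, N_{j})\to \int g_{\pm}$ and $\int(g_{+} - g_{-}) < \eta$, letting $N\to\infty$ and then $\eta\to 0$ will force $R_{k}(f, \alpha, N)\to\int f$, \emph{provided} $E\to 0$ a.s. This last point is the main obstacle: since $E$ is monotone in $N$, it suffices to treat $N = N_{j+1}$, and the $L^{1}(\mathcal{I})$-mean of $E(\cdot, j, N_{j+1})$ is $O(\eta)$ uniformly in $j$ by an analogue of the expectation bound already in the paper; to obtain almost-sure smallness, however, one has to apply a variance estimate of the same flavour as \eqref{eq:Var_Assum} to $E$ (which is a correlation sum restricted to a thin outer range of $\bfx$) and absorb the resulting null set into $\mathcal{A}$ by enlarging $\mathcal{F}$. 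This bookkeeping is the bulk of the ``minor adaptations from standard arguments'' alluded to in Section 2.
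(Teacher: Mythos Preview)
Your overall strategy---Borel--Cantelli along a sparse subsequence $(N_{j})$ with $N_{j+1}/N_{j}\to1$, then a sandwich interpolation using a countable family of test functions---is exactly the paper's approach. The difference is in how you execute the interpolation, and there you create a difficulty that does not need to exist.

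You compare $R_{k}(f,\alpha,N)$ only to the \emph{lower} subsequence value $N_{j}$, which forces you to account for the tuples in $\calBN(N)\setminus\calBN(N_{j})$ via the error term $E$. You then propose to control $E$ almost surely by ``a variance estimate of the same flavour as \eqref{eq:Var_Assum}''. But the hypothesis \eqref{eq:Var_Assum} is only assumed for the full correlation sum $R_{k}$, not for its restriction to a thin outer shell of $\bfx$; nothing in the statement of the proposition lets you bound the variance of $E$. So as written your argument appeals to an assumption that is not available, and this is a genuine gap.

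The paper (Lemma~\ref{lem:N_m_to_N}) sidesteps the issue entirely by comparing in \emph{both} directions: for the upper bound one uses $\calBN(N)\subseteq\calBN(N_{j+1})$ and a slightly enlarged test function to get $R_{k}(f,\alpha,N)\le(1+o(1))R_{k}(g_{+},\alpha,N_{j+1})$, while for the lower bound one uses $\calBN(N_{j})\subseteq\calBN(N)$ to get $R_{k}(f,\alpha,N)\ge(1-o(1))R_{k}(g_{-},\alpha,N_{j})$. With this two-sided comparison there is no leftover term $E$ at all, and the argument closes using only the subsequence limits already secured by Borel--Cantelli. (The paper carries this out for indicator functions of boxes rather than smooth $g_{\pm}$, but that is a cosmetic difference.) So the ``main obstacle'' you identify is an artefact of comparing only downward; compare upward for the upper bound and it disappears.
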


First we record a useful lemma that allows us to pass from the convergence
of a sub-sequence to the convergence of the entire sequence (extending
\cite[Lem. 3.1]{Rudnick-Technau} which was established for $k=2$).
\begin{lem}
\label{lem:N_m_to_N}Let $(\vartheta_{n})_{n\geq1}$ be a real-valued
sequence. If there is an increasing sequence $\left(N_{m}\right)_{m\geq1}$
of positive integers so that
\begin{equation}
\lim_{m\rightarrow\infty}\frac{N_{m+1}}{N_{m}}=1\label{eq:N_m_ratio}
\end{equation}
and so that
\begin{equation}
\lim_{m\rightarrow\infty}R_{k}\left(f,(\vartheta_{n}),N_{m}\right)=\int_{\mathbb{R}^{k-1}}\,f(\bfx)\,\mathrm{d}\bfx\label{eq: convergence on subsequence}
\end{equation}
holds for all $f\in C_{c}^{\infty}(\mathbb{R}^{k-1})$, then 
\begin{equation}
\lim_{N\rightarrow\infty}R_{k}\left(f,(\vartheta_{n}),N\right)=\int_{\mathbb{R}^{k-1}}\,f(\bfx)\,\mathrm{d}\bfx\label{eq: convergence}
\end{equation}
holds for all $f\in C_{c}^{\infty}(\mathbb{R}^{k-1})$. Moreover,
(\ref{eq: convergence}) holds for all indicator functions $f=1_{\Pi}$
of boxes $\Pi\subseteq\mathbb{R}^{k-1}$.
\end{lem}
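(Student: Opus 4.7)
The plan is a straightforward sandwich argument. For each $N$, locate $m$ with $N_m \le N < N_{m+1}$ and bound $R_k(f,(\vartheta_n),N)$ from above and below by expressions of the form $\tfrac{N_{m'}}{N_{m''}} R_k(f^\pm_m,(\vartheta_n),N_{m'})$, $m'\in\{m,m+1\}$, where $f^\pm_m\in C_c^\infty(\mathbb{R}^{k-1})$ are smooth majorants and minorants of $f$ chosen so that $f^\pm_m \to f$ in $L^1$ as $m\to\infty$. The prefactors $N_{m'}/N_{m''}$ tend to $1$ by (\ref{eq:N_m_ratio}), while (\ref{eq: convergence on subsequence}) applied to each fixed $f^\pm_m$ yields $R_k(f^\pm_m,(\vartheta_n),N_{m'})\to\int f^\pm_m$, which in turn tends to $\int f$ by $L^1$-convergence, giving (\ref{eq: convergence}).

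Writing $f=f_1-f_2$ with $f_1,f_2\in C_c^\infty(\mathbb{R}^{k-1})$ non-negative (via a fixed smooth bump that dominates $\|f\|_\infty$ on $\mathrm{supp}(f)$) and invoking linearity of $R_k$ in its first argument, we reduce to non-negative $f$. Fix such $f$, supported in $[-R,R]^{k-1}$, set $r_m\eqdef N_{m+1}/N_m$, and for $N\in[N_m,N_{m+1}]$ rewrite $N(\Delta-\bfm)=c\,N_{m+1}(\Delta-\bfm)$ with $c=N/N_{m+1}\in[r_m^{-1},1]$. Using $\|\nabla f\|_\infty<\infty$ together with the fact that $f(c\,\cdot)$ is supported in a fixed enlarged box for $m$ large, one constructs $f^+_m=f+\kappa_m\chi\in C_c^\infty$, with $\chi$ a fixed non-negative bump equal to $1$ on $[-2R,2R]^{k-1}$ and $\kappa_m=O(r_m-1)\to 0$, such that $f(cs)\le f^+_m(s)$ for every $s\in\mathbb{R}^{k-1}$ and $c\in[r_m^{-1},1]$. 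Combined with $\calBN(N)\subseteq\calBN(N_{m+1})$ and the bound $1/N\le 1/N_m$, non-negativity yields
\[
R_k(f,(\vartheta_n),N) \;\le\; \frac{N_{m+1}}{N_m}\,R_k(f^+_m,(\vartheta_n),N_{m+1}).
\]
A symmetric construction of a minorant $f^-_m\in C_c^\infty$ (e.g.\ obtained by mollifying $(f-\kappa_m\chi)_+$) combined with $\calBN(N)\supseteq\calBN(N_m)$ and $1/N\ge 1/N_{m+1}$ gives the matching lower bound. Since $\int f^\pm_m\to\int f$, letting $N\to\infty$ and using (\ref{eq: convergence on subsequence}) establishes (\ref{eq: convergence}) for all $f\in C_c^\infty(\mathbb{R}^{k-1})$.

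For the indicator of a box $\Pi\subseteq\mathbb{R}^{k-1}$, fix $\epsilon>0$ and choose $g_\pm\in C_c^\infty$ with $g_-\le 1_{\Pi}\le g_+$ and $\int(g_+-g_-)<\epsilon$. Monotonicity of $f\mapsto R_k(f,(\vartheta_n),N)$ on non-negative functions gives
\[
R_k(g_-,(\vartheta_n),N)\;\le\; R_k(1_{\Pi},(\vartheta_n),N)\;\le\; R_k(g_+,(\vartheta_n),N);
\]
applying the smooth case to $g_\pm$ and sending $\epsilon\to 0$ completes the argument. The only subtle point is the construction of the pair $f^\pm_m$ that simultaneously dominates (resp.\ is dominated by) the dilation $f(c\,\cdot)$ uniformly for $c\in[r_m^{-1},1]$ and converges to $f$ in $L^1$; this reduces to a routine mean-value estimate on the fixed compact support of $f$, so the whole argument is a standard smoothing-and-bootstrapping scheme, extending verbatim the $k=2$ version in \cite{Rudnick-Technau}.
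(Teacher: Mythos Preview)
Your overall strategy is sound and, interestingly, inverts the order of the paper's argument: the paper first passes from smooth test functions to indicators along the subsequence $(N_m)$, then upgrades indicator-convergence to the full sequence via the trivial box enlargement $\Pi\mapsto\Pi'=\prod_i[a_i-\delta,b_i+\delta]$, and only at the end returns to smooth $f$ by approximation with step functions. You instead stay with smooth $f$ throughout the subsequence-to-full-sequence step (using dilation majorants/minorants) and treat indicators last. Both routes are legitimate; the paper's has the advantage that enlarging a box is a one-line construction, while yours avoids the double passage smooth~$\to$~indicator~$\to$~smooth.

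There is, however, one point that needs repair. In your sandwich the test function $f^\pm_m$ is tied to the \emph{same} index $m$ as the point $N_{m\pm 1}$ at which you evaluate $R_k$, so what you actually need is the diagonal limit $\lim_{m\to\infty}R_k(f^\pm_m,(\vartheta_n),N_{m\pm 1})$. The hypothesis~(\ref{eq: convergence on subsequence}) only gives $\lim_{m'}R_k(g,(\vartheta_n),N_{m'})=\int g$ for each \emph{fixed} $g$, so your phrase ``applied to each fixed $f^\pm_m$'' followed by ``$\int f^\pm_m\to\int f$'' describes an iterated limit, not the diagonal one you need. For $f^+_m=f+\kappa_m\chi$ this is harmless: by linearity $R_k(f^+_m,\cdot,N_{m+1})=R_k(f,\cdot,N_{m+1})+\kappa_m R_k(\chi,\cdot,N_{m+1})$ with $f,\chi$ fixed, and the conclusion follows. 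But your proposed $f^-_m$, a mollification of $(f-\kappa_m\chi)_+$, does not split this way. The simplest fix is to note that you never needed $f^-_m\ge 0$: non-negativity of $f$ alone already justifies shrinking the sum to $\calBN(N_m)$ and replacing $1/N$ by $1/N_{m+1}$, after which the pointwise bound $f(c's)\ge f(s)-\kappa_m\chi(s)$ yields the lower sandwich with $f^-_m=f-\kappa_m\chi$, and linearity again applies. Alternatively, and this is precisely the paper's device transplanted to smooth functions, replace the $m$-dependent $f^\pm_m$ by a \emph{fixed} $f^\pm_\delta$ valid for all $c\in[1-\delta,1+\delta]$, first let $m\to\infty$, then $\delta\to 0$.
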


\begin{proof}
First we argue that if the assumption (\ref{eq: convergence on subsequence})
is true for all $f\in C_{c}^{\infty}(\mathbb{R}^{k-1})$ then it also
holds for all indicator functions $1_{\Pi}$ of boxes $\Pi=\left[a_{1},b_{1}\right]\times\ldots\times\left[a_{k-1},b_{k-1}\right]$.
Fix $\delta>0$ and choose $f_{-},f_{+}\in C_{c}^{\infty}(\mathbb{R}^{k-1})$
such that $f_{-}\leq1_{\Pi}\leq f_{+}$ and 
\[
\int_{\mathbb{R}^{k-1}}\left(f_{+}\left(\bfx\right)-f_{-}\left(\bfx\right)\right)\,\mathrm{d}\bfx<\delta.
\]
Then by the definition of the correlation sum (\ref{eq:R_k_def})
we have 
\[
R_{k}\left(f_{-},(\vartheta_{n}),N\right)\leq R_{k}\left(1_{\Pi},(\vartheta_{n}),N\right)\leq R_{k}\left(f_{+},(\vartheta_{n}),N\right).
\]
Thus
\[
\limsup_{m\rightarrow\infty}R_{k}\left(1_{\Pi},(\vartheta_{n}),N_{m}\right)\leq\limsup_{m\rightarrow\infty}R_{k}\left(f_{+},(\vartheta_{n}),N_{m}\right)=\int_{\mathbb{R}^{k-1}}f_{+}\left(\bfx\right)\,\mathrm{d}\bfx
\]
and 
\[
\liminf_{m\rightarrow\infty}R_{k}\left(1_{\Pi},(\vartheta_{n}),N_{m}\right)\geq\liminf_{m\rightarrow\infty}R_{k}\left(f_{-},(\vartheta_{n}),N_{m}\right)=\int_{\mathbb{R}^{k-1}}f_{-}\left(\bfx\right)\,\mathrm{d}\bfx.
\]
Therefore, 
\begin{align*}
0 & \le\limsup_{m\rightarrow\infty}R_{k}\left(1_{\Pi},(\vartheta_{n}),N_{m}\right)-\liminf_{m\rightarrow\infty}R_{k}\left(1_{\Pi},(\vartheta_{n}),N_{m}\right)\\
 & \le\int_{\mathbb{R}^{k-1}}\left(f_{+}\left(\bfx\right)-f_{-}\left(\bfx\right)\right)\,\mathrm{d}\bfx<\delta.
\end{align*}
Since $\delta>0$ was arbitrary, we conclude that 
\begin{equation}
\lim_{m\rightarrow\infty}R_{k}\left(1_{\Pi},(\vartheta_{n}),N_{m}\right)=\int_{\mathbb{R}^{k-1}}1_{\Pi}\left(\bfx\right)\,\mathrm{d}\bfx\label{eq:LimitIndicators}
\end{equation}
which verifies (\ref{eq: convergence on subsequence}) for $f=1_{\Pi}$. 

Given a positive integer $N$, we can find $m\geq1$ such that $N_{m}\leq N<N_{m+1}$.
Moreover,
\[
R_{k}\left(1_{\Pi},(\vartheta_{n}),N\right)=\frac{1}{N}\#\left\{ \bfx\in\calBN:\,\vartheta_{x_{i}}-\vartheta_{x_{i+1}}\in\left(\frac{a_{i}}{N},\frac{b_{i}}{N}\right)+\mathbb{Z},\hspace{1em}i=1,\ldots,k-1\right\} .
\]
The limit (\ref{eq:N_m_ratio}) implies that when $N$ is sufficiently
large we have $\frac{N_{m}}{N}=1+o\left(1\right)$. Given $\delta>0,$
we therefore let
\[
\Pi'=\left[a_{1}-\delta,b_{1}+\delta\right]\times\ldots\times\left[a_{k-1}-\delta,b_{k-1}+\delta\right]
\]
and see that for sufficiently large $N$ we have
\begin{align*}
 & R_{k}\left(1_{\Pi},(\vartheta_{n}),N\right)\\
 & \leq\frac{1}{N_{m}}\#\left\{ \bfx\in\calBN:\,\vartheta_{x_{i}}-\vartheta_{x_{i+1}}\in\left(\frac{a_{i}\cdot\frac{N_{m}}{N}}{N_{m}},\frac{b_{i}\cdot\frac{N_{m}}{N}}{N_{m}}\right)+\mathbb{Z}\hspace{1em},i=1,\ldots,k-1\right\} \\
 & \leq\frac{1}{N_{m}}\#\left\{ \bfx\in\calBN:\,\vartheta_{x_{i}}-\vartheta_{x_{i+1}}\in\left(\frac{a_{i}-\delta}{N_{m}},\frac{b_{i}+\delta}{N_{m}}\right)+\mathbb{Z}\hspace{1em},i=1,\ldots,k-1\right\} 
\end{align*}
where the right hand side is $R_{k}\left(1_{\Pi'},(\vartheta_{n}),N_{m}\right)$.
Thus, we conclude that 
\begin{align*}
\limsup_{N\rightarrow\infty}R_{k}\left(1_{\Pi},(\vartheta_{n}),N\right) & \leq\limsup_{m\rightarrow\infty}R_{k}\left(1_{\Pi'},(\vartheta_{n}),N_{m}\right)=\int_{\mathbb{R}^{k-1}}1_{\Pi'}\left(\bfx\right)\,\mathrm{d}\bfx.
\end{align*}
Recalling the definition of $\Pi'$, we clearly have 
\[
\int_{\mathbb{R}^{k-1}}1_{\Pi'}\left(\bfx\right)\,\mathrm{d}\bfx=\int_{\mathbb{R}^{k-1}}1_{\Pi}\left(\bfx\right)\,\mathrm{d}\bfx+\O\left(\delta\right).
\]
Since $\delta$ was arbitrary, we infer that
\[
\limsup_{N\rightarrow\infty}R_{k}\left(1_{\Pi},(\vartheta_{n}),N\right)\leq\int_{\mathbb{R}^{k-1}}1_{\Pi}\left(\bfx\right)\,\mathrm{d}\bfx
\]
and a similar argument shows that 
\[
\liminf_{N\rightarrow\infty}R_{k}\left(1_{\Pi},(\vartheta_{n}),N\right)\geq\int_{\mathbb{R}^{k-1}}1_{\Pi}\left(\bfx\right)\,\mathrm{d}\bfx.
\]
This establishes (\ref{eq: convergence}) for all functions $f=1_{\Pi}.$
Since every $f\in C_{c}^{\infty}(\mathbb{R}^{k-1})$ can be approximated
from below and from above by a linear combination of indicator functions
of boxes, (\ref{eq: convergence}) holds for smooth compactly supported
functions as well (by the same argument we detailed above to prove
(\ref{eq:LimitIndicators})).
\end{proof}
We are now ready to prove Proposition \ref{prop: L2 implies almost sure convergence}.
\begin{proof}[Proof of Proposition \ref{prop: L2 implies almost sure convergence}]
For each $m\geq1$, let $N_{m}=\lfloor m^{2/\rho}\rfloor$. For each
fixed $f\in C_{c}^{\infty}(\mathbb{R}^{k-1})$ define 
\[
X_{m}\left(\alpha\right)=\left|R_{k}\left(f,(\vartheta_{n}(\alpha)),N_{m}\right)-c_{k}\left(N_{m}\right)\int_{\mathbb{R}^{k-1}}f\left(x\right)\,\mathrm{d}x\right|^{2}.
\]
By (\ref{eq:Var_Assum}), the $L^{1}$-norms of $X_{m}\geq0$ on $\mathcal{I}$
are summable. Changing the order of summation and integration yields
\[
\int_{\mathcal{I}}\sum_{m\geq1}X_{m}\left(\alpha\right)\text{d}\alpha<\infty,
\]
and therefore for almost all $\alpha\in\mathcal{I}$ we have
\[
\sum_{m\geq1}X_{m}\left(\alpha\right)<\infty.
\]
In particular $X_{m}\left(\alpha\right)\rightarrow0$ for almost all
$\alpha\in\mathcal{I}$, and hence (\ref{eq: convergence on subsequence})
is satisfied for almost all $\alpha\in\mathcal{I}$ for our fixed
$f$; by a standard diagonal argument (approximating from above and
below by functions $f_{i}$ belonging to a countable dense set in
$C_{c}^{\infty}(\mathbb{R}^{k-1}$)), we conclude that for almost
all $\alpha\in\mathcal{I}$, (\ref{eq: convergence on subsequence})
holds for all $f\in C_{c}^{\infty}(\mathbb{R}^{k-1})$. Hence by Lemma
\ref{lem:N_m_to_N}, the limit (\ref{eq: convergence}) holds for
almost every $\alpha\in\mathcal{I}$, completing the proof.
\end{proof}
To prove Corollary \ref{cor: approximation to gap distribution},
we require a well-known relation between the gap distribution and
the correlation functions. Let
\[
\Delta_{k-1}=\Bigl\{(x_{1},\ldots,x_{k-1})\in\mathbb{R}_{>0}^{k-1}:\,\sum_{1\le i\leq k-1}x_{i}<1\Bigr\}
\]
denote the standard open $k-1$-simplex. For $x>0$, let $1_{x\Delta_{k-1}}$
be the indicator function of the dilation $x\Delta_{k-1}$.
\begin{lem}
\label{lem: Par--Zeev}Let $(\vartheta_{n})_{n\geq1}$ be a real-valued
sequence, and let $K\geq1$. For all $x>0$, we have
\[
\sum_{2\le k\leq2K+1}(-1)^{k}R_{k}\left(1_{x\Delta_{k-1}},(\vartheta_{n}),N\right)\le g\left(x,(\vartheta_{n}),N\right)\le\sum_{2\le k\leq2K}(-1)^{k}R_{k}\left(1_{x\Delta_{k-1}},(\vartheta_{n}),N\right).
\]
\end{lem}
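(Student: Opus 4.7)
The plan is to prove this standard sandwich bound by applying Bonferroni's inequalities to the indicator function of ``there exists a nearby point''.

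First I would set up a combinatorial counting identity. For each $n\in\{1,\dots,N\}$, define
\[
P_{n} \eqdef \#\bigl\{m\in\{1,\ldots,N+1\}\setminus\{n\}:\,\vartheta_{(m)}^{N}-\vartheta_{(n)}^{N}\in(0,x/N]\bigr\}.
\]
Then $P_{n}\ge1$ if and only if $N(\vartheta_{(n+1)}^{N}-\vartheta_{(n)}^{N})\le x$, since $\vartheta_{(n+1)}^{N}$ is the closest ordered point to the right of $\vartheta_{(n)}^{N}$. Consequently $N\cdot g(x,(\vartheta_{n}),N)=\sum_{n=1}^{N}\mathbf{1}\{P_{n}\ge1\}$.

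Next I would invoke Bonferroni: the exact identity $\mathbf{1}\{P_{n}\ge1\}=\sum_{j\ge1}(-1)^{j+1}\binom{P_{n}}{j}$ (a finite sum, since $\binom{P_{n}}{j}=0$ for $j>P_{n}$) has truncations that alternately upper- and lower-bound $\mathbf{1}\{P_{n}\ge1\}$; one verifies pointwise for each value of $P_{n}$ that
\[
\sum_{j=1}^{2K}(-1)^{j+1}\binom{P_{n}}{j}\;\le\;\mathbf{1}\{P_{n}\ge1\}\;\le\;\sum_{j=1}^{2K-1}(-1)^{j+1}\binom{P_{n}}{j}.
\]
Summing over $n$ and dividing by $N$, then relabelling by $k=j+1$ so that $(-1)^{j+1}=(-1)^{k}$, yields precisely the claimed sandwich, provided we can identify
\[
\frac{1}{N}\sum_{n=1}^{N}\binom{P_{n}}{k-1}\;=\;R_{k}(1_{x\Delta_{k-1}},(\vartheta_{n}),N).
\]

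For this final identification I would argue combinatorially: $\binom{P_{n}}{k-1}$ counts unordered $(k-1)$-element selections of other ordered points falling within distance $x/N$ to the right of $\vartheta_{(n)}^{N}$, so $\sum_{n}\binom{P_{n}}{k-1}$ enumerates $k$-element subsets of points whose $\vartheta$-values fit inside some interval of length $\le x/N$ (indexed by their leftmost member). On the other hand, for each distinct ordered $k$-tuple $\bfx\in\calBN$ the summand $\sum_{\bfm\in\mathbb{Z}^{k-1}}1_{x\Delta_{k-1}}(N(\Delta(\bfx,(\vartheta_{n}))-\bfm))$ is nonzero (equal to $1$) precisely when the indices $x_{1},\dots,x_{k}$ admit integer lifts of their $\vartheta$-values that are strictly decreasing with total range $<x/N$; thus each $k$-element subset contributes exactly once to the right-hand side, via the unique ordering that decreases along the supporting arc.

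The main obstacle will be the technical matching in Step~4, where small discrepancies must be reconciled: the correlation sum ranges over indices $\le N$ while $g$ uses the first $N{+}1$ ordered values, the simplex $x\Delta_{k-1}$ is open ($<x/N$) while the set defining $P_{n}$ is half-open ($\le x/N$), and one is phrased on the circle while the other on the line. Each of these contributes only $O(1)$ overcounted $k$-tuples at the boundary, so one can absorb them by a limiting argument (replacing $x$ by $x\pm\delta$ and letting $\delta\to0^{+}$, for instance) or by adjusting the definition of $P_{n}$ to use the circular arc of length $<x/N$ among the first $N$ points; either way the inclusion-exclusion step is unaffected and the stated inequality follows.
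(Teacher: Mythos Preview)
Your approach is correct and is precisely the Bonferroni/inclusion--exclusion argument that underlies the result; the paper itself gives no self-contained proof but simply cites \cite[Lem.~11 and (A.2)]{Kurlberg-Rudnick}, so you have effectively reconstructed the referenced argument. The boundary discrepancies you flag in Step~4 (open vs.\ half-open simplex, $N$ vs.\ $N{+}1$ points, line vs.\ circle) are genuine but each affects only $O_k(1)$ tuples and does not disturb the inequalities, exactly as you indicate.
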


\begin{proof}
The claim follows from Lemma 11 and (A.2) of \cite{Kurlberg-Rudnick}.
\end{proof}
We are now in the position to prove Corollary \ref{cor: approximation to gap distribution}.
\begin{proof}[Proof of Corollary \ref{cor: approximation to gap distribution}]
By Theorem \ref{thm: higher order}, for almost all
\[
\alpha>4(2K+1)^{2}-4(2K+1)-1=16K^{2}+8K-1,
\]
the $k$-level correlation functions $R_{k}$ are Poissonian for all
$2\le k\leq2K+1$, so that as $N\to\infty$, $R_{k}\left(1_{x\Delta_{k-1}},(\vartheta_{n}),N\right)$
converges to the volume of $x\Delta_{k-1}$ which is equal to $x^{k-1}/(k-1)!$.
The claimed inequalities now follow from Lemma \ref{lem: Par--Zeev}.
\end{proof}

\end{document}